\documentclass[12pt]{article}
\usepackage{amsmath,amsthm,amssymb}

\usepackage{txfonts}	
\usepackage[vcentermath]{youngtab}
\usepackage{tabularx}
\usepackage{booktabs,array}
 \usepackage{mathtools}

\usepackage{color}
\definecolor{myColor}{gray}{0.975}


\setlength{\topmargin}{10.5mm}
\addtolength{\topmargin}{-1in}
	\setlength{\oddsidemargin}{10mm}
\addtolength{\oddsidemargin}{-1in}

\setlength{\textwidth}{160mm}	
\setlength{\textheight}{245mm}	
\setlength{\headsep}{0mm}
\setlength{\headheight}{0mm}

\setlength{\abovedisplayskip}{2pt} 
\setlength{\belowdisplayskip}{2pt}

\newcommand{\create}[1]{\begin{#1}}
\newcommand{\delete}[1]{\end{#1}}

\newcommand{\Frac}[2]{\leavevmode\kern.10em\raise.50ex\hbox{\the\scriptfont0 $#1$}\kern+.20em
                         {\big /}\kern.20em\lower.80ex\hbox{\the\scriptfont0 $#2$}}

\newcommand{\Sum}[2]
{
	\lower.40ex\hbox{$
	\underset{#1}{\overset{#2}{\mathlarger{\mathlarger{\mathlarger \Sigma}}}}
	\hspace{.04in}$}
}
\newcommand{\cupEx}[2]
{
	\lower.40ex\hbox{$
	\underset{#1}{\overset{#2}{\mathlarger{\mathlarger{\mathlarger{ \cup}}}}}
	\hspace{.02in}$}
}
\newcommand{\capEx}[2]
{
	\lower.40ex\hbox{$
	\underset{#1}{\overset{#2}{\mathlarger{\mathlarger{\mathlarger{ \cap}}}}}
	\hspace{.02in}$}
}
\newcommand{\Prod}[2]
{
	\lower.40ex\hbox{$
	\underset{#1}{\overset{#2}{\mathlarger{\mathlarger{\mathlarger \Pi}}}}
	\hspace{.04in}$}
}
\newcommand{\Directsum}{\rotatebox[origin=c]{180}{\reflectbox{$\Pi$}}}
\newcommand{\Coprod}[2]
{
	\lower.40ex\hbox{$
	\underset{#1}{\overset{#2}{\mathlarger{\mathlarger{\mathlarger \Directsum}}}}
	\hspace{.04in}$}
}
\newcommand{\OPlus}[2]
{
	\lower.40ex\hbox{$
	\underset{#1}{\overset{#2}{\mathlarger{\mathlarger{\mathlarger \oplus}}}}
	\hspace{.02in}$}
}

\newcommand{\simto} {
	\rightarrow\hspace{-12.5pt}\raise4.pt\hbox{$\mathsmaller \sim$} \hspace{7.4pt}}
\newcommand{\simlongto} {\longrightarrow \hspace{-16.5pt}\raise4pt\hbox{$\sim$} \hspace{5.51pt}}
\newcommand{\simlongtoL}{
	\longrightarrow \hspace{-16.5pt}\raise4pt\hbox{$\sim$} \hspace{8.85pt}}

\newcommand{\coloniff}{\hspace{.1in}\raise0.06ex\hbox{:}\mspace{-10mu}\iff}

\newcommand{\sen}
{
	\noindent
	\hspace{-2em}
	\leavevmode
	\leaders\hrule depth-2.1pt height 3.3pt\hfill\kern0pt
	\hspace{-2em}
	\vspace{.5ex}
}

\newcommand{\senS}
{
	\noindent
	\hspace{-.6em}
	\leavevmode
	\leaders\hrule depth-2.1pt height 2.5pt\hfill\kern0pt
	\hspace{-.6em}
	\vspace{.5ex}
}

\newcommand{\id}{\text{\hspace{1pt}id}}
\newcommand{\Ker}{\text{Ker}\,}

\newcommand{\gl}{\text{gl}}

\newcommand{\hs}[1]{\hspace{#1}}
\newcommand{\vs}[1]{\vspace{#1}}

\newtheorem{theorem}{Theorem}[section]

\newtheorem{notation}[theorem]{Notation}

\newtheorem{proposition}[theorem]{Proposition}

\newtheorem{corollary}[theorem]{Corollary}

\newtheorem{lemma}[theorem]{Lemma}

\newtheorem{example}[theorem]{Example}

\theoremstyle{definition}
\newtheorem{definition}[theorem]{Definition}

\newtheorem*{remark}{Remark}

\numberwithin{equation}{section}

\usepackage{secdot}
\sectiondot{section}

\long\def\note#1\par{%
\leavevmode
\hangindent3em Note: #1}

	\setlength{\oddsidemargin}{0mm}
\usepackage{mathtools}

\usepackage{amsthm,amssymb}
\usepackage{cancel}
\makeatletter
\def\@seccntformat#1{\csname the#1\endcsname.\hs{-.375em}\quad}
\makeatother
\usepackage{tikz}
\usetikzlibrary{matrix}

\begin{document}

\setlength{\abovedisplayskip}{1ex} 
\setlength{\belowdisplayskip}{1ex}

\renewcommand{\labelenumi}{(\rm \theenumi)}
\renewcommand{\labelenumii}{$(${\rm \alph{enumii}}$)$}

\newcommand*{\QEDA}{\hfill\ensuremath{\square}}

\newcommand{\Uq}{U\hs{-.15em}_q(\mathfrak{g})}
\newcommand{\Ut}{\tilde{U}\hs{-.15em}_q(\mathfrak{g})}
\newcommand{\Up}{U\hs{-.15em}_q^{\hs{.2em}+}
\hs{-.1em}(\mathfrak{g})}
\newcommand{\Um}{U\hs{-.15em}_q^{\hs{.2em}-}
\hs{-.1em}(\mathfrak{g})}
\newcommand{\Uz}{U\hs{-.15em}_q^{{\hs{.2em}0}}
\hs{-.0em}(\mathfrak{g})}

\newcommand{\Utp}{\tilde{U}\hs{-.15em}_q^{\hs{.2em}+}
\hs{-.1em}(\mathfrak{g})}
\newcommand{\Utm}{\tilde{U}\hs{-.15em}_q^{\hs{.2em}-}
\hs{-.1em}(\mathfrak{g})}
\newcommand{\Utz}{\tilde{U}\hs{-.15em}_q^{{\hs{.2em}0}}
\hs{-.0em}(\mathfrak{g})}

\newcommand{\Bb}{\hs{1pt}\overline{\hs{-1pt}B\hs{1pt}}\hs{-1pt}}

\newcommand{\NEWPAGE}{\newpage}
	\renewcommand{\NEWPAGE}{}

\newcommand{\VS}{\vs{4ex}}
	\renewcommand{\VS}{}
\newcommand{\cI}{{\mathcal I}}
\newcommand{\cA}{{\mathcal A}}
\newcommand{\cB}{{\mathcal B}}
\newcommand{\cC}{{\mathcal C}}
\newcommand{\cD}{{\mathcal D}}
\newcommand{\cF}{{\mathcal F}}
\newcommand{\cH}{{\mathcal H}}
\newcommand{\cK}{{\mathcal K}}
\newcommand{\cL}{{\mathcal L}}
\newcommand{\cM}{{\mathcal M}}
\newcommand{\cN}{{\mathcal N}}
\newcommand{\cO}{{\mathcal O}}
\newcommand{\cQ}{{\mathcal Q}}
\newcommand{\cS}{{\mathcal S}}
\newcommand{\cT}{{\mathcal T}}
\newcommand{\cV}{{\mathcal V}}
\newcommand{\cP}{{\mathcal P}}
\newcommand{\cR}{{\mathcal R}}
\newcommand{\cY}{{\mathcal Y}}
\newcommand{\fra}{\mathfrak a}
\newcommand{\frb}{\mathfrak b}
\newcommand{\frc}{\mathfrak c}
\newcommand{\frd}{\mathfrak d}
\newcommand{\fre}{\mathfrak e}
\newcommand{\frf}{\mathfrak f}
\newcommand{\frg}{\mathfrak g}
\newcommand{\frh}{\mathfrak h}
\newcommand{\fri}{\mathfrak i}
\newcommand{\frj}{\mathfrak j}
\newcommand{\frk}{\mathfrak k}
\newcommand{\frI}{\mathfrak I}
\newcommand{\fm}{\mathfrak m}
\newcommand{\frn}{\mathfrak n}
\newcommand{\frp}{\mathfrak p}
\newcommand{\fq}{\mathfrak q}
\newcommand{\frr}{\mathfrak r}
\newcommand{\frs}{\mathfrak s}
\newcommand{\frt}{\mathfrak t}
\newcommand{\fru}{\mathfrak u}
\newcommand{\frA}{\mathfrak A}
\newcommand{\frB}{\mathfrak B}
\newcommand{\frF}{\mathfrak F}
\newcommand{\frG}{\mathfrak G}
\newcommand{\frH}{\mathfrak H}
\newcommand{\frJ}{\mathfrak J}
\newcommand{\frN}{\mathfrak N}
\newcommand{\frP}{\mathfrak P}
\newcommand{\frQ}{\mathfrak Q}
\newcommand{\frT}{\mathfrak T}
\newcommand{\frU}{\mathfrak U}
\newcommand{\frV}{\mathfrak V}
\newcommand{\frX}{\mathfrak X}
\newcommand{\frY}{\mathfrak Y}
\newcommand{\frZ}{\mathfrak Z}
\newcommand{\rA}{\mathrm{A}}
\newcommand{\rC}{\mathrm{C}}
\newcommand{\rd}{\mathrm{d}}
\newcommand{\rB}{\mathrm{B}}
\newcommand{\rD}{\mathrm{D}}
\newcommand{\rE}{\mathrm{E}}
\newcommand{\rH}{\mathrm{H}}
\newcommand{\rK}{\mathrm{K}}
\newcommand{\rL}{\mathrm{L}}
\newcommand{\rM}{\mathrm{M}}
\newcommand{\rN}{\mathrm{N}}
\newcommand{\rR}{\mathrm{R}}
\newcommand{\rT}{\mathrm{T}}
\newcommand{\rZ}{\mathrm{Z}}
\newcommand{\bbA}{\mathbb A}
\newcommand{\bbB}{\mathbb B}
\newcommand{\bbC}{\mathbb C}
\newcommand{\bbG}{\mathbb G}
\newcommand{\bbF}{\mathbb F}
\newcommand{\bbH}{\mathbb H}
\newcommand{\bbK}{\mathbb K}
\newcommand{\bbP}{\mathbb P}
\newcommand{\bbN}{\mathbb N}
\newcommand{\bbQ}{\mathbb Q}
\newcommand{\bbR}{\mathbb R}
\newcommand{\bbV}{\mathbb V}
\newcommand{\bbX}{\mathbb X}
\newcommand{\bbY}{\mathbb Y}
\newcommand{\bbZ}{\mathbb Z}

\newcommand{\scI}{{\mathscr I}}
\newcommand{\scA}{{\mathscr A}}
\newcommand{\scB}{{\mathscr B}}
\newcommand{\scC}{{\mathscr C}}
\newcommand{\scD}{{\mathscr D}}
\newcommand{\scF}{{\mathscr F}}
\newcommand{\scH}{{\mathscr H}}
\newcommand{\scK}{{\mathscr K}}
\newcommand{\scL}{{\mathscr L}}
\newcommand{\scM}{{\mathscr M}}
\newcommand{\scN}{{\mathscr N}}
\newcommand{\scO}{{\mathscr O}}
\newcommand{\scQ}{{\mathscr Q}}
\newcommand{\scS}{{\mathscr S}}
\newcommand{\scT}{{\mathscr T}}
\newcommand{\scV}{{\mathscr V}}
\newcommand{\scP}{{\mathscr P}}
\newcommand{\scR}{{\mathscr R}}
\newcommand{\scY}{{\mathscr Y}}

\newcommand{\adj}{\operatorname{adj}}
\newcommand{\Ad}{\mathrm{Ad}}
\newcommand{\Ann}{\mathrm{Ann}}
\newcommand{\rcris}{\mathrm{cris}}
\newcommand{\ch}{\mathrm{ch}}
\newcommand{\coker}{\mathrm{coker}}
\newcommand{\diag}{\mathrm{diag}}
\newcommand{\Diff}{\mathrm{Diff}}
\newcommand{\Dist}{\mathrm{Dist}}
\newcommand{\rDR}{\mathrm{DR}}
\newcommand{\ev}{\mathrm{ev}}
\newcommand{\Ext}{\mathrm{Ext}}
\newcommand{\cExt}{\mathcal{E}xt}
\newcommand{\fin}{\mathrm{fin}}
\newcommand{\hd}{\mathrm{hd}}
\newcommand{\rht}{\mathrm{ht}}

\newcommand{\im}{\mathrm{im}}
\newcommand{\inc}{\mathrm{inc}}
\newcommand{\ind}{\mathrm{ind}}
\newcommand{\coind}{\mathrm{coind}}
\newcommand{\Lie}{\mathrm{Lie}}
\newcommand{\Max}{\mathrm{Max}}
\newcommand{\mult}{\mathrm{mult}}
\newcommand{\op}{\mathrm{op}}
\newcommand{\ord}{\mathrm{ord}}
\newcommand{\pt}{\mathrm{pt}}
\newcommand{\qt}{\mathrm{qt}}
\newcommand{\rad}{\mathrm{rad}}
\newcommand{\res}{\mathrm{res}}
\newcommand{\rgt}{\mathrm{rgt}}
\newcommand{\rk}{\mathrm{rk}}
\newcommand{\SL}{\mathrm{SL}}
\newcommand{\soc}{\mathrm{soc}}
\newcommand{\Spec}{\mathrm{Spec}}
\newcommand{\St}{\mathrm{St}}
\newcommand{\supp}{\mathrm{supp}}
\newcommand{\Tor}{\mathrm{Tor}}
\newcommand{\wt}{\mathrm{wt}}
\newcommand{\Ab}{\mathbf{Ab}}
\newcommand{\Alg}{\mathbf{Alg}}
\newcommand{\Grp}{\mathbf{Grp}}
\newcommand{\Mod}{\mathbf{Mod}}
\newcommand{\Sch}{\mathbf{Sch}}\newcommand{\bfmod}{{\bf mod}}
\newcommand{\Qc}{\mathbf{Qc}}
\newcommand{\Rng}{\mathbf{Rng}}
\newcommand{\Var}{\mathbf{Var}}
\newcommand{\gromega}{\langle\omega\rangle}
\newcommand{\lbr}{\begin{bmatrix}}
\newcommand{\rbr}{\end{bmatrix}}
\newcommand{\cd}{commutative diagram }
\newcommand{\SpS}{spectral sequence}
\newcommand{\seteq}{\mathbin{:=}}
\newcommand\C{\mathbb C}
\newcommand\hh{{\hat{H}}}
\newcommand\eh{{\hat{E}}}
\newcommand\F{\mathbb F}
\newcommand\fh{{\hat{F}}}
\def\ge{\frg}
\def\AA{{\cal A}}
\def\al{\alpha}
\def\aq{\cA_q(\frn)}
\def\tilaq{\widetilde{\cA_q}(\frn)}
\def\bfk{{\bf k}}
\def\ci(#1,#2){c_{#1}^{(#2)}}
\def\Ci(#1,#2){C_{#1}^{(#2)}}
\def\mpp(#1,#2,#3){#1^{(#2)}_{#3}}
\def\bCi(#1,#2){\ovl C_{#1}^{(#2)}}
\def\ch(#1,#2){c_{#2,#1}^{-h_{#1}}}
\def\cc(#1,#2){c_{#2,#1}}
\def\bfx{{\bf x}}
\def\bup{B^{\geq}}
\def\blow{\ovl B^{\leq}}
\def\bfi{{\mathbf i}}
\def\bfm{{\mathbf m}}
\def\bfh{{\mathbf h}}
\def\bfj{{\mathbf j}}
\def\bq{B_q(\ge)}
\def\bqm{B_q^-(\ge)}
\def\bqz{B_q^0(\ge)}
\def\bqp{B_q^+(\ge)}
\def\beneme{\begin{enumerate}}
\def\beq{\begin{equation}}
\def\beqn{\begin{eqnarray}}
\def\beqnn{\begin{eqnarray*}}
\def\bigsl{{\hbox{\fontD \char'54}}}
\def\bbra#1,#2,#3{\left\{\begin{array}{c}\hspace{-5pt}
#1;#2\\ \hspace{-5pt}#3\end{array}\hspace{-5pt}\right\}}
\def\cd{\cdots}
\def\ci(#1,#2){c_{#1}^{(#2)}}
\def\CC{\mathbb{C}}
\def\colb{\color{blue}}
\def\colr{\color{red}}
\def\ddd{\hbox{\germ D}}
\def\del{\delta}
\def\Del{\Delta}
\def\Delr{\Delta^{(r)}}
\def\Dell{\Delta^{(l)}}
\def\Delb{\Delta^{(b)}}
\def\Deli{\Delta^{(i)}}
\def\Delre{\Delta^{\rm re}}
\def\ei{e_i}
\def\eit{\tilde{e}_i}
\def\Eit{\widetilde{E}_i}
\def\eneme{\end{enumerate}}
\def\ep{\epsilon}
\def\eeq{\end{equation}}
\def\eeqn{\end{eqnarray}}
\def\eeqnn{\end{eqnarray*}}
\def\fit{\tilde{f}_i}
\def\Fit{\widetilde{F}_i}
\def\FF{{\rm F}}
\def\ft{\tilde{f}}
\def\fulp{\Phi_{\rm BK}}
\def\gau#1,#2{\left[\begin{array}{c}\hspace{-5pt}#1\\
\hspace{-5pt}#2\end{array}\hspace{-5pt}\right]}
\def\gl{\hbox{\germ gl}}
\def\hom{{\hbox{Hom}}}
\def\HOM{{\rm H\textsc{om}}}
\def\hfpp{\Phi^{(+)}}
\def\hfpm{\Phi^{(-)}}
\def\ify{\infty}
\def\id{{\rm id}}
\def\io{\iota}
\def\kp{k^{+}}
\def\km{k^{-}}
\def\llra{\relbar\joinrel\relbar\joinrel\relbar\joinrel\rightarrow}
\def\lan{\langle}
\def\lar{\longrightarrow}
\def\max{{\rm max}}
\def\ld{\ldots}
\def\lm{\lambda}
\def\Lm{\Lambda}
\def\mapright#1{\smash{\mathop{\longrightarrow}\limits^{#1}}}
\def\mapLR#1{\smash{\mathop{\longleftrightarrow}\limits^{#1}}}
\def\Mapright#1{\smash{\mathop{\Longrightarrow}\limits^{#1}}}
\def\mm{{\bf{\rm m}}}
\def\nd{\noindent}
\def\nn{\nonumber}
\def\nnn{\hbox{\germ n}}
\def\catob{{\cal O}(B)}
\def\oint{{\cal O}_{\rm int}(\ge)}
\def\ot{\otimes}
\def\op{\oplus}
\def\opi{\ovl\pi_{\lm}}
\def\ovl{\overline}
\def\plm{\Psi^{(\lm)}_{\io}}
\def\pmby{\pmb y}
\def\qq{\qquad}
\def\q{\quad}
\def\qed{\hfill\framebox[2mm]{}}
\def\QQ{\mathbb Q}
\def\qi{q_i}
\def\qii{q_i^{-1}}
\def\ra{\rightarrow}
\def\ran{\rangle}
\def\rgmod{\hbox{$R$-\hbox{gmod}}}
\def\rlm{r_{\lm}}
\def\ssl{\mathfrak sl}
\def\slh{\widehat{\ssl_2}}
\def\syl{\scriptstyle}
\def\ti{t_i}
\def\tii{t_i^{-1}}
\def\til{\tilde}
\def\tilrgmod{\widetilde{R}\hbox{-gmod}}
\def\tm{\times}
\def\tt{\frt}
\def\ua{U_{\AA}}
\def\ue{U_{\vep}}
\def\uq{U_q(\ge)}
\def\ufin{U^{\rm fin}_{\vep}}
\def\ufinp{(U^{\rm fin}_{\vep})^+}
\def\ufinm{(U^{\rm fin}_{\vep})^-}
\def\ufinz{(U^{\rm fin}_{\vep})^0}
\def\uqm{U^-_q(\ge)}
\def\uqp{U^+_q(\ge)}
\def\uqmq{{U^-_q(\ge)}_{\bf Q}}
\def\uqpm{U^{\pm}_q(\ge)}
\def\uqq{U_{\bf Q}^-(\ge)}
\def\uqz{U^-_{\bf Z}(\ge)}
\def\ures{U^{\rm res}_{\AA}}
\def\urese{U^{\rm res}_{\vep}}
\def\uresez{U^{\rm res}_{\vep,\ZZ}}
\def\util{\widetilde\uq}
\def\uup{U^{\geq}}
\def\ulow{U^{\leq}}
\def\TY(#1,#2,#3){#1^{(#2)}_{#3}}
\def\Vbox{V_{\fsquare(0.2cm,{})}}
\def\Lbox{L_{\fsquare(0.2cm,{})}}
\def\Bbox{B_{\fsquare(0.2cm,{})}}
\def\Bfy{B(\ify)}
\def\vep{\varepsilon}
\def\vp{\varphi}
\def\vpi{\varphi^{-1}}
\def\VV{{\cal V}}
\def\xii{\xi^{(i)}}
\def\Xiioi{\Xi_{\io}^{(i)}}
\def\xxi(#1,#2,#3){\displaystyle {}^{#1}\Xi^{(#2)}_{#3}}
\def\WW{{\cal W}}
\def\wtil{\widetilde}
\def\what{\widehat}

\def\Lm{\Lambda}
\def\lm{\lambda}
\def\vep{\varepsilon}
\def\vp{\varphi}
\def\cd{\cdots}
\def\ovl{\overline}
\def\ot{\otimes}
\newpage
\create{center}
{\Large
Decomposition Theorem for Product of Fundamental Crystals in Monomial Realization
of type $C_n$}
		
		\vspace{1ex}
{\large
Manal Alshuqayr\footnote{
Ministry of Education, Salahuddin 12434, Riyadh, Saudi Arabia
}} and  
{\large Toshiki Nakashima\footnote{ Division of Mathematics, 
Sophia University, Kioicho 7-1. Chiyoda-ku, Tokyo 102-8554, Japan,
T.N. is supported by 
Grant-in-Aid for Scientific Research (C) 20K03564, 
Japan Society for the Promotion of Science.}

}
\delete{center}

\thispagestyle{empty}

\begin{abstract}
We consider a product of fundamental crystals of type $C_n$ in monomial realization, 
where the product means a natural product of 
Laurent monomials, not a tensor product.  
Then we shall show that the product still holds a crystal structure and 
describe how it is decomposed into irreducible crystals, which is, in general, 
different from the decomposition rule for the tensor product of the fundamental crystals. 
\end{abstract}

\section {Introduction }

\noindent\indent
For a symmetrizable Kac-Moody algebra $\mathfrak{g}$, let $\Uq$ be the corresponding quantum group. 
It is known that any integrable highest weight $\Uq$-module $V$
 has a crystal base 
 $(L,B,wt,\{\varepsilon_i\}, \{\varphi_i\}, \{{\tilde e}_i\},\{{\tilde f}_i\})_{i\in I}$  
 \cite{M1,M2}, where $I$ is the index set of simple roots of $\mathfrak g$, 
 $B$ is called a crystal of $V$, 
 $\varepsilon_i, \varphi_i$ are integer valued-functions on $B$ and 
 $\tilde e_i$, $\tilde f_i$ are called the {\it Kashiwara operators} on $B$.          
Roughly, we can say that crystal base is a basis for a $\Uq$-module at $q=0$ and 
holds a remarkable property for the tensor product of modules, namely,
if $\Uq$-modules $M_i$ ($i=1,2$) have crystal bases $(L_i,B_i)$ ($i=1,2$), then 
the tensor product $M_1\otimes M_2$ holds 
a crystal base $(L_1\otimes L_2, B_1\otimes B_2)$. 
The explicit actions of the Kashiwara operators on tensor product of crystal bases
are given as follows (see Sect.2 for details): for $b_1\in B_1, \,b_2\in B_2$, we have
\begin{align} \label{tenor rule}
         \nonumber
        \tilde{e}_{i}( b_{1}\otimes \, b_{2} )&=\left\{ {\begin{array}{rcl}
         \tilde{ e}_{i}b_{1}\otimes \, b_{2}&\mbox{ if}&\varphi_{i}( 
         b_{1} )\geq \varepsilon_{i}( b_{2} ),\\ 
         b_{1}\otimes \tilde{e}_{i}b_{2}&\mbox{ if}& \varphi 
        _{i}( b_{1} )<\varepsilon_{i}( b_{2} ), 
            \end{array}} \right.\\
         \nonumber
        \tilde{f}_{i}( b_{1}\otimes \, b_{2} )&=\left\{ {\begin{array}{rcl}
         \tilde{f}_{i}b_{1}\otimes \, b_{2}&\mbox{ if}&\varphi_{i}( 
         b_{1} )>\varepsilon_{i}( b_{2} ),\\ 
         b_{1}\otimes \tilde{f}_{i}b_{2}&\mbox{ if}&\varphi 
       _{i}( b_{1} )\le \varepsilon_{i}( b_{2} ).
         \end{array}} \right.
           \end{align}

Indeed, using these rules, we consider a decomposition of tensor product of crystal bases into irreducible crystal bases (see e.g.,\cite{TN}), which  describes 
       the decomposition of tensor product of corresponding modules, that is, 
for any dominant weight $\lambda,\,\mu$, there exist   dominant weights
       $\lambda_{1},\mathellipsis ,\lambda_{k}$ such that 
\[    
B(\lambda)\otimes B(\mu)\cong B(\lambda_{1}) \oplus \mathellipsis\oplus B(\lambda_{k}),
\]
and then we obtain the decomposition 
\[
V(\lambda)\otimes V(\mu)\cong V(\lambda_{1}) \oplus \mathellipsis\oplus V(\lambda_{k}).
\]

      There are several types of realization for crystal bases, $e.g.$, tableaux realization, path realization, geometric realization, 
polyhedral realization, $etc$. 
Here we will treat the ``monomial realization" which is introduced by
Nakajima \cite{N} and refined by Kashiwara \cite{MK}. Let $\mathcal{M}$ be the set of Laurent monomials in the variables 
$Y_{i}(n)\, (i \in  I,\,n \in \mathbb{Z},$ see Sect.3): 
\[
\mathcal{M}\coloneqq\left\{{\left. \prod\limits_{i\in I,n\in \mathbb{ Z}} {Y_{i}( n )}^{{y}_{i}(n)} \,\right|}\, 
y_{i}(n)\in\mathbb{Z}\, {\hbox{vanish except finitely many }}(i,\, n)\right\}.
\]
Then we can define the crystal structures on this set of Laurent monomials. 
This ${\mathcal M}$ or its subcrystals are 
called a {\it monomial realization} of crystals.
In Sect.3, it is introduced that if a monomial 
$Y$ satisfies the highest weight condition, 
then the irreducible component in $\mathcal{M}$ including
$Y$ is isomorphic to the crystal $B(wt(Y))$ corresponding 
to the module $V(wt(Y))$ \cite{MK} as we mentioned above. In fact, 
by Corollary \ref{M crystal }
if $\mathfrak{g}$ is  semi-simple, then $\mathcal{M}$ is an infinite direct sum of irreducible crystal $B(\lambda)$'s, namely, 
 any irreducible component in $\mathcal{M}$ is isomorphic to the crystal
$B(\lambda)$ for some dominant weight $\lambda$.
Let us denote an irreducible component corresponding to a highest monomial $Y$ 
by $\mathcal{M}(Y)$. The tensor product $\mathcal{M}(Y)\otimes \mathcal{M}(Y')$ 
has a crystal structure. But it is not clear whether the set of products
\[
\mathcal{M}(Y)\cdot\mathcal{M}(Y') \coloneqq\left\{M_{1}\cdot M_{2} \vert\, M_{1}
\in \mathcal{M}(Y),
M_{2}\in \mathcal{M}(Y')\right\},
\]
holds a crystal structure, where $M_{1}\cdot M_{2}$ means a natural product of Laurent  monomials.

Let $\mathcal{M}(Y_{i}(m))$ $(i \in  I,\, m \in \mathbb{ Z})$ 
be one of monomial realizations for the fundamental crystal $B(\Lambda_{i})$
(in \cite{BCD} the explicit forms are given for classical type $\ge$), 
where note that the weight of $Y_i(m)$ is equal to 
the fundamental weight $\Lambda_i$ and 
$\tilde e_j Y_i(m)=0$ for any $j\in I$. 
In this article, for type $C_{n}$ we consider the following problems
(see \cite{AN} for type $A_n$):
\begin{enumerate}
\item[(1)] \label{crystal structure} 
Dose the product
       $\mathcal{M}(Y_{p}(m) )\cdot \mathcal{M}(Y_{q}( 1 ))$
       $(p,q\in I,\,m\in \mathbb{Z})$ hold a crystal structure?
\item[(2)] \label{describe the decomposition}
If the answer for \eqref{crystal structure} is affirmative, 
describe the decomposition of the crystal \\
$\mathcal{M}(Y_{p}(m) )\cdot \mathcal{M}(Y_{q}( 1 ))$ $(m\geq1$).
\end{enumerate} 
Indeed, the answer for \eqref{crystal structure} is  affirmative, which is shown in 
       Proposition \ref{crystal of product}, and more general results are given
 in Corollary \ref{general result}; 
 for any $m_1,m_2, \mathellipsis m_l\in
 \mathbb{ Z}$, $p_1,p_2,\mathellipsis p_l\in I\, (l>0)$,
\[
\mathcal{M}(Y_{p_1}( m_1) ) \cdot  
\mathcal{M}(Y_{p_2}(m_2) ) \cdot  \mathellipsis  \cdot  
\mathcal{M}(Y_{p_l}( m_l) ),
\]
holds a crystal structure. Here note that there might be some monomials $M_{1},M_{2}\in \mathcal{M}(Y)$ and $M'_{1},M'_{2}\in \mathcal{M}
      (Y')$ such that $M_{1}\neq M'_{1}$,  $M_{2}\neq M'_{2}$ and $M_{1}\cdot M_{2}= M'_{1}\cdot M'_{2}$. Therefore, it may happen on the number of monomials:
\[
\#(\mathcal{M}(Y)\cdot\mathcal{M}(Y'))<\#(\mathcal{M}(Y)\otimes\mathcal{M}(Y')),
\]
     and then one can deduce that the decomposition for $\mathcal{M}(Y)\cdot\mathcal{M}(Y')$ may differ from the one for the tensor product
$\mathcal{M}(Y)\otimes\mathcal{M}(Y')$. 
Indeed, the answer for (2) is  given in Theorem \ref{smy.deco.}.
  \begin{align}
                   \nonumber
                  \mathcal{M}(Y_{p}(m) ) \cdot\mathcal{M}(Y_{q}( 1 ))
                   &\cong   B(\Lambda_{p}+\Lambda_{q})\\
\nonumber
&\oplus\bigoplus \limits_{\substack{(a,c)\ne(p,q),(q,p)\\
0\le a\le p,\,\,a+p\le c+q,\,\,(p+q)-(a+c)\in 2\mathbb{ Z}_{>0.}\\a\le c\le n,\,\,a+q\le c+p,\,\,\frac{p+q+c-a}{2}\le n.} }B(\Lambda_{a}+\Lambda_{c})\cdot  \mathcal{L}(m\geq \frac{q-p-a-c+2}{2}+n)\\
              \nonumber
                &\oplus\bigoplus\limits_{\substack{(a,c)\ne(p,q),(q,p)\\
                {0\le a< p<c\le n},\,\,a+p\le c+q,\,\,p+q=a+c,\\
a+q\le c+p,\,\,\frac{p+q+c-a}{2}\le n.}}
B( \Lambda_{a} +\Lambda _{c} )\cdot \mathcal{L}(m\geq q-a+1),
\end{align}
where $\mathcal{L}(P)=1$ if $P$ is true and $\mathcal{L}(P)=0$ otherwise.
By Proposition \ref{Y forms}, forgetting the condition in $\mathcal L$, 
then we get the decomposition rule 
for the tensor product $\mathcal{M}(Y_{p}(m))\otimes\mathcal{M}(Y_{q}(1))$. 
Hence, by this theorem, if $m$ is sufficiently large, we get that 
          $\mathcal{M}(Y_{p}(m))\cdot\mathcal{M}(Y_{q}(1))\cong B(\Lambda_{p})\otimes
          B(\Lambda_{q})$.
However, if not we find that $\mathcal{M}(Y_{p}(m))\cdot\mathcal{M}(Y_{q}(1))$ is not 
necessarily isomorphic to $ B(\Lambda_{p})\otimes B(\Lambda_{q})$.
Indeed, if $m=1$, we know that   $\mathcal{M}(Y_{p}(m))\cdot\mathcal{M}(Y_{q}(1))\cong 
B(\Lambda_{p}+\Lambda_{q})\varsubsetneq B(\Lambda_{p})\otimes B(\Lambda_{q})$.

\noindent\indent
The organization of the article is as follows:
\newline
In Section 2, we will recall the theory of crystal bases, such as, 
the existence theorem, the tensor product theorem and crystal graphs.
In Section 3, Nakajima's monomial realization of crystals will be introduced
(\cite{MK,N}), 
 which is our main ingredient in the article.
 In Section 4, first, the decomposition theorem for a tensor product of 
 the fundamental crystals $B(\Lambda_p)\otimes B(\Lambda_q)$ will be given 
 in Lemma \ref{decomposition}.
The explicit form of  monomial realization for the fundamental crystal 
$B(\Lambda_{i})$ of type $C_{n}$  will be described following \cite{BCD}.
In Section 5,  we state the first main result, which is the answers for  question (1). 
Indeed, we show that the product for fundamental crystals in monomial realization of 
type $C_{n}$ holds a crystal structure and  
we will obtain several important results for 
monomial realization of fundamental crystal  $\mathcal{M}(Y_{p}(m))$.  
In particular, Theorem \ref{induction on n} is crucial to describe the decomposition of product 
$\mathcal{M}(Y_{p}(m))\cdot \mathcal{M}(Y_{q}(1))$ $(m\geq1)$. 
In the last section, we state the second main result, which is the answers for  question  (2). In fact, we obtain the explicit form of  the decomposition for product $\mathcal{M}(Y_{p}(m))\cdot \mathcal{M}(Y_{q}(1))$ $(m\geq1)$, 
which is given in Theorem \ref{forms thm} and Theorem \ref{smy.deco.}. 

\noindent\indent
J.Kamnitzer et al. treated the similar problem in \cite{KJ}, for simply-laced cases. Their
approach is completely differs from ours but the result for type $A_n$ might be the same. 
But their method may not be applied to
multi-laced cases, like as type $C_n$.

{\bf Acknowledgments}: We appreciate M.Nakasuji 
for her helpful comments and suggestions.

\section{Preliminaries}
	\noindent\indent
  Our basic setting is the same as the one in 
  \cite[Sect.2]{AN}. 
  \begin{notation}
  Let $A=(a_{ij})_{i,j\in I}$ be the Cartan matrix for a simple Lie algebra $\mathfrak{g}$ and let $\mathbb{Q}(q)$ be the rational function field in $q$, where $I=\{1,\mathellipsis ,n\}$. 
Let  $t$ be a Cartan subalgebra of $\mathfrak{g}$, 
        $\{\alpha_i\in t ^\ast\}_{i\in I}$ the 
       set of simple roots  and $\{h_i\in t\}_{i\in I}$ 
       the set of simple coroots. 
We take an inner product $(\,\, ,  \,\,)$ on $t^\ast$ such that $(\alpha_i,\alpha_i)\in \mathbb{Z}_{>0}$ and 
      $\langle h_i,  \lambda \rangle =2( \alpha_i, \lambda )/(\alpha_i,\alpha_i)$ for 
       $\lambda_i\in t ^\ast $. Let $\{\Lambda_i\}_{i\in I}$ be the dual basis of $\{h_i\}$ and set
      $P \coloneqq \oplus_i \mathbb{Z}\Lambda_i$ and  $P^\vee \coloneqq \oplus_i \mathbb{Z}h_i$. 
      Then the quantum group  $U_q(\mathfrak{g})$ is the algebra over  
      $\mathbb{Q}(q)$ 
      generated by $\{e_{i},\,f_{i},\,k_{i}^{\pm 1}\}$
      ( $i \in I=\{\,1,\cdots ,n\}\,$) satisfying  the usual
    relations (see e.g., \cite{AN}).
	For simple roots $\{\,\alpha_{i}\vert i=1,2,\mathellipsis ,n\}\,$,
	$a \in \mathbb{Z}$ and $i\in I$, 
	set $q_i\coloneqq q^{(\alpha_{i},\alpha_{i})}$, 
	$[a]_i\coloneqq \frac{q_i^{\,a} - q_i^{-a}}{q_i-q_i^{-1}}$ and 
$f_i^{(k)}\coloneqq
		\frac{f_i^k}
		{[k]_i!}
		\lower2pt\hbox{,}\,
e_i^{(k)}\coloneqq
		\frac{e_i^k}
		{[k]_i!}$, where
$[n]_{i}!\coloneqq [n]_i[n-1]_i\mathellipsis[2]_i[1]_i$, \end{notation}
	\iftrue
 
Let $M$ be a finite dimensional $U_q(\mathfrak{g})$-module and for 
$\lambda\in P$ set $M_\lambda=\{v\in M  \vert 
k_iv=q^{\langle h_i, \lambda\rangle }(\forall  i\in I)\}$, which is called a weight space 
of $M$ of weight $\lambda$. Then we know that  $M=\bigoplus_{\lambda \in P}M_{\lambda } $.

 \begin{definition}
                  Let $M=\bigoplus_{\lambda \in P}M_{\lambda } $ be a 
                 finite dimensional  $\Uq$-module.
                  For each $i\, \in \, I$, every weight vector $u\, \in \, M_{\lambda }\, 
                  (\lambda \in \, \, wt(M))$ is written in the form 
                        \begin{align} \label{form of u}
   u=u_{0}\, +\, f_{i}u_{1}\, + \mathellipsis+\, f_{i}^{(N)}u_{N}, 
                       \end{align}
               where $N \in \mathbb{Z} _{\geq 0 }$ and $ u_{k}\in M_{\lambda +k\alpha 
                  _{i}}\cap\Ker e_{i}$ for any $k=0,1,\mathellipsis ,N$.
                  Here, each $u_{k}$ in the expression is uniquely determined by $u$, and 
                  $u_{k}\ne 0$ only if $\lambda (h_{i})+ k\geq0$. 
                The\itshape{ Kashiwara operators}  $\tilde{e}_{i},\tilde{f}_{i}\in End_{\mathbb{Q}(q)}(M)( i\in I )$ are defined  for $u=\sum\limits_{k=0}^{N}f_{i}^{(k)}u_{k}\in M_{\lambda}$ by                       
                          \begin{align}
                  \tilde{e}_{i}u=\sum\limits_{k=1}^N f_{i}^{(k-1)} \, u_{k},\, \, \, \, 
                  \, \, \tilde{f}_{i}u=\sum\limits_{k=0}^N f_{i}^{(k+1)} \, u_{k}\, .
                         \end{align}
 \end{definition}

Let $\mathcal A$ be the subring of $\mathbb{Q}(q)$ defined by ${\mathcal A}
=\{f(q)/g(q)\,\vert\, f(q),g(q)
\in \mathbb{Q}[q], g(0)\neq 0\}$.

\begin{definition}
Let $M$ be a finite dimensional  $\Uq$-module and $L$ be a 
free ${\mathcal A}$-submodule. $ L $ is called a \itshape{crystal lattice} if 
$M\cong \mathbb{Q}(q)\otimes_{\mathcal A} L$,
$L=\bigoplus_{\lambda \in P}(L \cap M_{\lambda })$ and 
                 $\tilde{e}_{i}L\subset L,\, \tilde{f}_{i}L\subset L$
                   for all $ i\in I .$
               
 \end{definition}
     \begin{definition}\label{crystal basis}
               A pair $(L, B)$ is called a \textbf{\itshape{crystal base}} of a  finite dimensional  $\Uq$-module $M$ if it satisfies:  
          \begin{enumerate}
              \item $L$ is a crystal lattice of $M$,
              \item $B$ is a $\mathbb{Q}$-basis of $L/qL$,
               \item $B=\bigsqcup_{\lambda \in P }B_{\lambda},$
                 where $ B_{\lambda }= B\cap ( L_{\lambda }/q L_{\lambda }),$ 
                 \item $\tilde{e}_{i}\,B\subset B\cup \left\{ 0 \right\},\,
                 \tilde{f}_{i}B\subset B\cup \left\{ 0 \right\}$ for all $ i\in I,$
                 \item  for any $ b,\, b' \in B$ and $ i\in I,$ we have $\tilde{f}_{i}b=b'$
                 if and only if $ b=\tilde{e}_{i}b'$.
                  \end{enumerate}
 \end{definition}

          Let $P_{+}\coloneqq \{\sum\limits_{i=1}^{n}m_{i}\Lambda_{i}\, \vert\, m_{i}\in \mathbb{Z} 
           _{\geq 0 }
          (\forall i\in I)\}$ be the set of dominant weights. For any finite dimensional irreducible 
          $\Uq$-module $L$, there exists a unique dominant weight $\lambda\in P_{+}$ such 
         that $L \cong V(\lambda)$, where $V(\lambda)\coloneqq \Uq / \sum\limits_{i\in I} \Uq e_{i}
         +\sum\limits_{i\in I} \Uq(k_{i}-q_{i}^{\langle h_i,  \lambda \rangle})+\sum\limits_{i\in I} \Uq 
          f_{i}^{1+\langle h_i,  \lambda \rangle}$.
Here let $\pi _{\lambda}\coloneqq \Uq \rightarrow V(\lambda)$ be the natural projection and  set $u_{\lambda}\coloneqq \pi _{\lambda}(1)$, 
 which is the highest weight vector of $ V(\lambda)$.
Let us define for $\lambda\in P_{+}$, 
\[
L(\lambda)\coloneqq \sum\limits_{\substack{ i_{1},
       \mathellipsis ,i_{l}\in I\\ l\geq 0}}A \tilde{f}_{i_1}\mathellipsis\tilde{f}_{i_l}u_{\lambda},\quad
B(\lambda)\coloneqq \left\{\tilde{f}_{i_1}\mathellipsis\tilde{f}_{i_l}u_{\lambda}{\rm mod}\, qL(\lambda)\,\vert\, i_{1},\mathellipsis ,i_{l}\in I, l\geq 0\right\}\setminus\{0\}.
\]
\begin{theorem} [{\cite{M1,M2}}]
       The pair $(L(\lambda),B(\lambda))$ is a crystal base of $V(\lambda)$.
\end{theorem}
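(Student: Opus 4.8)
The plan is to reduce the whole statement to the rank one case and then run Kashiwara's simultaneous induction, the so-called ``grand loop'', on the weight. For each fixed $i\in I$ one views $V(\lm)$ as a module over the subalgebra $U_i\cong U_q(\mathfrak{sl}_2)$ generated by $e_i,f_i,k_i^{\pm1}$; the decomposition \eqref{form of u} of a weight vector is then precisely its $i$-string decomposition under $U_i$, so all the data $\tilde e_i,\tilde f_i$ attached to the index $i$ are governed by rank one representation theory. I would therefore first settle $\frg=\mathfrak{sl}_2$ completely and then bootstrap.

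\textbf{Base case.} For $\frg=\mathfrak{sl}_2$ every integrable module is a direct sum of finite dimensional irreducibles, each of which is a single string $u_0,\tilde f u_0,\dots,\tilde f^{N}u_0$ with $\tilde e u_0=0$, on which $\tilde e,\tilde f$ act as the evident shifts. For such a module $L(\lm)=\sum_k \cA\,\tilde f^{k}u_\lm$ is manifestly free, compatible with the weight grading, and stable under $\tilde e,\tilde f$, while the classes of the string vectors form a $\bbQ$-basis of $L(\lm)/qL(\lm)$ satisfying clauses (4) and (5) of Definition \ref{crystal basis}. Thus the theorem holds in rank one by direct inspection, and this computation is the engine of the general argument.

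\textbf{Grand loop.} For general $\frg$ I would fix $\lm\in P_+$ and prove, by induction on the depth $\ell=\rht(\lm-\mu)$, the following package of statements for all weights $\mu$ with $\rht(\lm-\mu)\le\ell$:
\begin{enumerate}
\item $\tilde e_i L(\lm)\subset L(\lm)$ and $\tilde f_i L(\lm)\subset L(\lm)$ for every $i$, and $L(\lm)=\bigoplus_\mu\bigl(L(\lm)\cap V(\lm)_\mu\bigr)$;
\item the classes of $\tilde f_{i_1}\cdots\tilde f_{i_l}u_\lm$ form a $\bbQ$-basis of $L(\lm)/qL(\lm)$ in these weights;
\item $\tilde e_i B(\lm)\subset B(\lm)\cup\{0\}$ and $\tilde f_i B(\lm)\subset B(\lm)\cup\{0\}$;
\item for $b,b'\in B(\lm)$ one has $\tilde f_i b=b'$ if and only if $b=\tilde e_i b'$.
\end{enumerate}
Each statement at depth $\ell$ is deduced from the statements at depths $<\ell$ together with the rank one analysis applied to the $U_i$-strings through the weight spaces in question: given a vector at depth $\ell$ one expands it via \eqref{form of u} for a suitable $i$, controls $\tilde e_i,\tilde f_i$ by the $\mathfrak{sl}_2$ result, and descends by the inductive hypothesis.

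\textbf{Main obstacle.} The genuine difficulty, and the reason the four statements must be proved together rather than in sequence, is the interaction of two distinct operators $\tilde e_i$ and $\tilde f_j$ with $i\ne j$: one must establish the ``commutation modulo $qL(\lm)$'' congruences, for instance $\tilde e_i\tilde f_j\equiv\tilde f_j\tilde e_i\pmod{qL(\lm)}$ whenever both sides are defined, together with the compatibility of the $i$-string and $j$-string gradings on a common weight space. These congruences are exactly what allow the loop to close, and proving them requires descending to the rank two subalgebra generated by $U_i$ and $U_j$ and using the quantum Serre relations to bound the error terms; this is where essentially all of the work lies. Once the congruences are available the package propagates through every depth, establishing all the clauses of Definition \ref{crystal basis} for $(L(\lm),B(\lm))$. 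As a fallback for the most delicate point — the linear independence asserting that $B(\lm)$ is an honest basis of $L(\lm)/qL(\lm)$ — I would keep Kashiwara's polarization method in reserve: equip $V(\lm)$ with a suitable symmetric bilinear form, show that $B(\lm)$ is almost orthonormal modulo $qL(\lm)$, and read off both the basis property and clauses (4) and (5) from the nondegeneracy of the induced form.
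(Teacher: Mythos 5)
First, a remark on the ground truth: the paper does not prove this statement at all — it is imported verbatim from Kashiwara's papers \cite{M1,M2} (the citation is in the theorem header), so the only proof to compare against is the one in those references, namely the ``grand loop'' of \cite{M2}. At the level of architecture your proposal matches that proof: the rank-one base case, the simultaneous induction on $\mathrm{ht}(\lambda-\mu)$, and the identification of the cross-index interaction ($\tilde e_i$ against $\tilde f_j$ with $i\ne j$) as the crux are all genuinely Kashiwara's.

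However, two of your steps would fail as written. (a) Your inductive package of four statements about $V(\lambda)$ alone is too small to close the loop. Kashiwara's induction interlocks many more statements, crucially including ones about the lattice and crystal $(L(\infty),B(\infty))$ of $U_q^{-}(\mathfrak{g})$, the compatibility of the projections $\pi_\lambda\colon U_q^{-}(\mathfrak{g})\to V(\lambda)$ with these lattices and bases, and the behaviour of embeddings of $V(\lambda)$ into tensor products; it is exactly these auxiliary objects, together with the explicit tensor-product rule, that provide the mechanism by which statements at depth $\ell$ are deduced from depth $\ell-1$. With only your four clauses there is no such mechanism, and the induction stalls at the very point you flag as the main obstacle. (b) Your proposed resolution of that obstacle — descend to the rank-two subalgebra and control the congruence $\tilde e_i\tilde f_j\equiv\tilde f_j\tilde e_i \pmod{qL(\lambda)}$ via the quantum Serre relations — is not Kashiwara's method and does not work as stated: the Kashiwara operators are not elements of the algebra, so the Serre relations give no direct control of their composites modulo $qL(\lambda)$; and invoking the crystal-base theorem for the rank-two subalgebra begs the question, both because rank two is not materially easier and because $L(\lambda)$ (generated by all the $\tilde f_k$ from $u_\lambda$) is not a priori compatible with the crystal lattices of the restricted module. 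In \cite{M2} these congruences are outputs of the loop, obtained from the tensor-product embeddings together with the contravariant form; the polarization argument you hold ``in reserve'' is in fact a central, load-bearing tool of the actual proof, not a fallback.
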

\begin{theorem}[{\cite{M1,M2}}]
          Let $M_{j}$ be a  finite dimensional $\Uq$-module and let $( L_{j},
          B_{j} )\, \, $be a crystal basis of $M_{j} \, (j\, =\, 1,\, 2)$. Set 
          $L:={L_{1}\otimes }_{\mathcal A}L_{2}$ and $B:=\, B_{1}\otimes B_{2}:=\{b_1\ot b_2\mid b_1\in B_1,\,b_2\in B_2\}$ .
Then $( L, B )$ is a crystal basis of ${M_{1 }\otimes 
        }_{\mathbb{Q}(q)}M_{2 }$, where the action of Kashiwara operators$\, 
        \tilde{e}_{i}$ and $\tilde{ f}_{i}$ on $B \, (i \in I)$ is given 
        by 
         \begin{align} 
         \nonumber
        \tilde{e}_{i}( b_{1}\otimes \, b_{2} )&=\left\{ {\begin{array}{rcl}
         \tilde{ e}_{i}b_{1}\otimes \, b_{2}&\mbox{ if}&\varphi_{i}( 
         b_{1} )\geq \varepsilon_{i}( b_{2} ),\\ 
         b_{1}\otimes \tilde{e}_{i}b_{2}&\mbox{ if}& \varphi 
        _{i}( b_{1} )<\varepsilon_{i}( b_{2} ), 
            \end{array}} \right.\\
         \nonumber
        \tilde{f}_{i}( b_{1}\otimes \, b_{2} )&=\left\{ {\begin{array}{rcl}
         \tilde{f}_{i}b_{1}\otimes \, b_{2}&\mbox{ if}&\varphi_{i}( 
         b_{1} )>\varepsilon_{i}( b_{2} ),\\ 
         b_{1}\otimes \tilde{f}_{i}b_{2}&\mbox{ if}&\varphi 
       _{i}( b_{1} )\le \varepsilon_{i}( b_{2} ), 
         \end{array}} \right.
           \end{align}
      and we have
         \begin{align}\label{epsilon def}
           \nonumber
         wt( b_{1}\otimes \, b_{2} )&=\, wt( b_{1} )+\, wt(\, 
         b_{2}),\\
        \varepsilon_{i}( b_{1}\otimes \, b_{2} )&=\max (\varepsilon 
       _{i}( b_{1} ){,\varepsilon }_{i}( b_{2} 
        )-\langle\hs{1pt} \ h_{i}, wt( b_{1} )\rangle),\\
           \nonumber
        \varphi_{i}( b_{1}\otimes \, b_{2} )&=\max (\varphi_{i}( 
        b_{2} ),\varphi_{i}( b_{1} 
        )+\langle\hs{1pt} \ h_{i}, wt( b_{2} )\rangle).
              \end{align} 
Here, 
we understand $b_{1}\otimes 0 =0\otimes  b_{2} =0$.
\end{theorem}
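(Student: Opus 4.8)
This is the tensor product theorem of Kashiwara, and the plan is to reduce it to a single rank-one computation for $U_q(\ssl_2)$. First I would dispose of the structural conditions of Definition~\ref{crystal basis} that are essentially formal: the lattice $L=L_1\ot_{\mathcal A}L_2$ is free over $\mathcal A$ with $\mathbb Q(q)\ot_{\mathcal A}L\cong M_1\ot M_2$, and because $\wt(b_1\ot b_2)=\wt(b_1)+\wt(b_2)$ it is compatible with the weight decomposition; moreover $L/qL\cong(L_1/qL_1)\ot_{\mathbb Q}(L_2/qL_2)$, so $B=B_1\ot B_2$ is automatically a $\mathbb Q$-basis of $L/qL$ graded by $P$. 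Thus conditions (1)--(3) hold at once, while conditions (4), (5) together with the stability $\tilde e_iL,\tilde f_iL\subset L$ will drop out as soon as the displayed formulas for $\tilde e_i,\tilde f_i$ are verified, since those formulas visibly send $B_1\ot B_2$ into $(B_1\ot B_2)\cup\{0\}$ and present $\tilde e_i$ and $\tilde f_i$ as mutually inverse partial bijections.

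The decisive reduction is that every assertion is made one index $i\in I$ at a time, and the operators $\tilde e_i,\tilde f_i$ as well as the integers $\vep_i,\vp_i$ depend only on the subalgebra generated by $e_i,f_i,k_i^{\pm1}$. Fixing $i$, I would restrict $M_1$ and $M_2$ to $U_q(\ssl_2)$ and invoke complete reducibility of finite-dimensional $U_q(\ssl_2)$-modules to write each as a direct sum of irreducibles. Since the crystal base of a direct sum is the disjoint union of the crystal bases of the summands, and the tensor-product formulas only compare data attached to individual basis vectors, it suffices to treat a single pair of irreducibles $M_1=V(l_1)$, $M_2=V(l_2)$. Their crystal bases are single $i$-strings $B_1=\{b_0,\dots,b_{l_1}\}$, $B_2=\{c_0,\dots,c_{l_2}\}$ with $\tilde f_i b_j=b_{j+1}$, $\tilde e_i b_j=b_{j-1}$ (and analogously for the $c_k$), so that $\vep(b_j)=j$, $\vp(b_j)=l_1-j$, and $\langle h_i,\wt(b_j)\rangle=\vp(b_j)-\vep(b_j)$.

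The heart of the proof is then the explicit rank-one computation. Expanding the coproduct of a divided power in the schematic form $\Del(f_i^{(N)})=\sum_{m=0}^{N}d_m\,f_i^{(N-m)}k_i^{-m}\ot f_i^{(m)}$ with each $d_m$ a power of $q_i$, applying it to a weight vector $b_j\ot c_k$ in $L_1\ot L_2$, and letting the Cartan part $k_i^{-m}$ act, produces for each $m$ a coefficient that is an explicit power of $q_i$ determined by $\langle h_i,\wt(b_j)\rangle$ and $m$. Reducing modulo $q$, only the term of lowest $q$-order survives, and comparing these orders across $m$ shows that the surviving contribution is $\tilde f_i b_j\ot c_k$ exactly when $\vp(b_j)>\vep(c_k)$ and $b_j\ot\tilde f_i c_k$ otherwise, with the dual statement for $\tilde e_i$; the same bookkeeping computes the length of the $i$-string through $b_j\ot c_k$ and so yields the $\max$-formulas for $\vep_i$ and $\vp_i$. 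I expect the main obstacle to be precisely this $q\to0$ analysis: one must show that the leading $q$-powers in the binomial-type expansion of $\Del(f_i^{(N)})$ decouple the two tensor factors along the $\max$-rule, i.e.\ that passing to the crystal limit genuinely selects a single factor on which to act. Once the single-string case is settled, reassembling the direct-sum decompositions of $M_1$ and $M_2$ and letting $i$ range over $I$ completes the verification that $(L,B)$ is a crystal base with the asserted operators.
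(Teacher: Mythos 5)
First, a point of comparison: the paper does not prove this statement at all --- it is quoted verbatim from Kashiwara's papers [M1, M2] as a foundational input, so there is no in-paper argument to measure your proposal against. Your sketch follows the route of Kashiwara's original proof: reduce everything to the rank-one subalgebra generated by $e_i, f_i, k_i^{\pm1}$, and then verify the tensor rule by an explicit computation with divided powers and the coproduct in the crystal limit $q\to 0$. That is the right architecture, and your disposal of conditions (1)--(3) of Definition \ref{crystal basis} is fine.

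However, as a proof the proposal has a genuine gap, and you name it yourself: the rank-one $q\to 0$ analysis is described but not carried out, and it is not a routine verification --- it is the entire mathematical content of the theorem. One must show not merely that a single term in the expansion of $\Delta(f_i^{(N)})(b_j\ot c_k)$ ``survives,'' but that all the other terms actually lie in $qL$, that the surviving term is congruent to the claimed basis vector (not just proportional to it mod $qL$), and that the resulting $i$-strings close up so that $\tilde e_i$ and $\tilde f_i$ remain mutually inverse partial bijections on $B_1\ot B_2$; this requires careful bookkeeping of $q_i$-powers and an induction along strings, and it is where Kashiwara's proof spends its effort. A second, quieter gap sits in your reduction step: you invoke ``the crystal base of a direct sum is the disjoint union of the crystal bases of the summands,'' but that is the easy direction. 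What the reduction actually needs is the converse --- that the \emph{given} crystal base $(L_j,B_j)$, restricted to $U_q(\ssl_2)$, is carried by some isomorphism onto the direct sum of the standard string bases of the irreducible constituents. That is the uniqueness theorem for crystal bases of finite-dimensional $U_q(\ssl_2)$-modules, itself a nontrivial result of [M1, M2], and it must be either proved or explicitly cited before the problem genuinely decouples into single pairs of strings $B_1=\{b_0,\dots,b_{l_1}\}$, $B_2=\{c_0,\dots,c_{l_2}\}$.
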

\begin{definition}[{ \cite{M}}]
Let $I$ be a finite index set and let $A=(a_{ij})_{i,j\in I}$ be 
a Cartan matrix of $\mathfrak g$ and $P$ a corresponding weight lattice.   
\itshape{Crystal} associated with the Cartan matrix $A$  is 
a set $B$ together with the maps 
$wt:B\to P$, $  \tilde{e}_{i},\,  \tilde{f}_{i}:B\to B\cup \{0\}$, 
and $\varepsilon _{i},\,\varphi_{i}:B\to\mathbb{Z}\cup\{-\infty\}(i\in I)$ 
satisfying the following: 
\begin{align*}
& \hbox{$\varphi_{i}(b)=\varepsilon_{i}(b)+\langle h_{i},\,wt(b)\rangle$ for all $i\in I$},\\
&  \hbox{$wt(  \tilde{e}_{i}b)=wt\, b+\alpha_{i}$ if $ \tilde{e}_{i}b\in B$, }
\q \hbox{$wt(  \tilde{f}_{i}b)=wt\, b-\alpha_{i}$ if $ \tilde{f}_{i}b\in B$},\\
&  \hbox{$\varepsilon_{i}(  \tilde{e}_{i}b)=\varepsilon_{i} ( b)-1,\,\varphi_{i}(  \tilde{e}_{i}b)=\varphi_{i}( b)+1$ if $\tilde{e}_{i}b\in B$,}\\
&  \hbox{$\varepsilon_{i}(  \tilde{f}_{i}b)=\varepsilon_{i} ( b)+1,\,\varphi_{i}(  \tilde{f}_{i}b)=\varphi_{i}( b)-1$ if $\tilde{f}_{i}b\in B$,}\\
&  \hbox{$ \tilde{f}_{i}b=b'$ if and only if $b=\tilde{e}_{i}b'$ for $b,\,b'\in B$, $i\in I$,}\\
&  \hbox{if $\varphi_{i}(b)=-\infty$ for $b\in B$, then $  \tilde{e}_{i}b=  \tilde{f}_{i}b=0$.}
\end{align*}
 \end{definition}
\begin{definition}
             Let $B$ be a crystal.
             Take $B$ as the set of vertices and define the $I$-colored 
             arrows on $B$ by: 
$b\buildrel i \over \longrightarrow b'\,$ if and only if
              $\tilde{f}_{i}b =b^{'}(i\in I).$
Then $B$ is given an $I$-colored oriented graph structure, 
which is called the {\it crystal graph} of $B$.
 \end{definition}

\section{Nakajima's monomial realization }
In this section, we recall the crystal structure on the set of monomials discovered by H. Nakajima \cite{N}. Our exposition follows that of M.Kashiwara \cite{MK}.
Let $\mathcal{M}=\mathcal{M}( \mathfrak{g})$ be the set of Laurent monomials in the variables $Y_{i}(m)\, (i \in  I, m \in \mathbb{ Z})$, called $Y$-variable: 
\[
\mathcal{M}( \mathfrak{g})\coloneqq\left\{{\left. \prod\limits_{i\in I,m\in \mathbb{ Z}} {Y_{i}( m )}^{{y}_{i}(m)} \,\right|}\, 
y_{i}(m)\in\mathbb{Z}\, {\hbox{vanish except finitely many }}(i,\, m)\right\}.
\]
We shall define a crystal structure of $\mathcal{M}$. 
Let $c={(c_{ij})}_{i\ne j\in I}$ be a set of integers such that 
$c_{ij}+\, c_{ji} = 1$ for $i\ne j$.
 We set 
\[
   A_{i}(m)\, =\, Y_{i}(m)Y_{i}(m\, +\, 1)\, \prod\limits_{j\ne i} {Y_{j}(m\, 
   +c_{ji})}^{\langle h_{j},\alpha_{i}\rangle\,}.
\]
For a monomial  $M\, =\, \prod\limits_{i\,\in I,m\,\in\,\mathbb{ Z}} {Y_{i}
( m )}^{y_{i}( m )}\in\mathcal{M} $, we define
            \begin{align}\label{varepsilon}
              \nonumber
            wt( M )&=\, \sum\nolimits_i ( \sum\nolimits_m {y_{i}(m)} )\Lambda 
             _{i}\, \, ,\\
             \varphi_{i}( M )&= \max\,\bigg\{\, \sum_{k\le m} {y_{i}(k)} ;\, 
              m\in \,\mathbb{ Z}\bigg\},\\
              \nonumber
             \varepsilon_{i}( M )&=\max\,\bigg\{\,-\sum_{k> m} {y_{i}(k)} ;\, 
              m\in \, \mathbb{ Z}\bigg\}.
                  \end{align} 
We also define 
\begin{align}
\nonumber
\tilde{ f}_{i}( M )&=\left\{ {\begin{array}{rcl}
                  0\quad\quad\quad&\mbox{ If}& \varphi_{i}( M )=0,\\ 
                   A_{i}{(n_{f})}^{-1}M&\mbox{  If}&\varphi 
                  _{i}( M )>0,
                          \end{array}} \right.
                         \qq 
                         \tilde{ e}_{i}( M )&=\left\{ {\begin{array}{rcl}
                      0\quad\quad&\mbox{ If}& \varepsilon_{i}( M )=0, \\ 
                 A_{i}(n_{e})M&\mbox{  If}&\varepsilon 
                 _{i}( M )>0, 
                               \end{array}} \right.
                           \end{align}
where 
\begin{align}\label{nf def}
n_{f}&=\min\left\{m\mid \varphi_{i}( M )\mathrm{=}\sum_{k\le m} {y_{i}(k)}\right\}=\min\left\{m\mid \varepsilon_{i}( M )\mathrm{=-}\sum_{k>m} {y_{i}(k)}\right\} \, ,\\
 n_{e}&=\max\left\{ m\mid \varphi_{i}( M 
                     )\mathrm{=}\sum_{k\le m} {y_{i}( k )} \right\}    
=\max\left\{m\mid \varepsilon_{i}( M )\mathrm{=-}\sum_{k>n} {y_{i}(k)} \right\}\,.
\end{align}

                           Note that $y_{i}(n_{f}) > 0,\,y_{i}(n_{f}+1)\le 0$ and
                      $  y_{i}(n_{e}+1)< 0,\,y_{i}(n_{e})\geq 0$. 
 
                    Let us denote by  $\mathcal{M}_{c}$ the crystal $\mathcal{M}$ associated with c.

   We shall call a crystal 
an $I$-crystal when we emphasize the index set $I$ of simple roots. 
For an $I$-crystal $B\, $and a subset $J$ of $ I$,  let us denote by $\Psi_{J} (B)$ the
            $J$-crystal $B$ with the induced crystal structure from $B$. 
\begin{definition}
            A crystal $B$ is called \itshape{ semi-normal} if for each $i\in I$  the 
           $\{i\}\,$-crystal $\Psi 
           _{\{i\}\,}(B)$  is a crystal associated with a finite dimensional  module. This is 
           equivalent to say that $\varepsilon_{i}(b)=max \{\,n \in \, N\, ;\, 
           \tilde{e}_{i}^{n}b \ne 0\}$ and $\varphi_{i}(b)=max\{\,n \in N\, 
           ;\, \tilde{f}_{i}^{n}b\ne 0\}$ for any $b\, \in \, B$ and $i\, \in I$.
         \end{definition}

 \begin{definition}
              A crystal $ B$ is called \itshape{ normal} if for any subset $J$  of  $I$ such that 
              $\{\,\alpha_{i}\, ;\, i\in J\}\,$ is a set of simple roots for finite-dimensional 
               semisimple Lie algebra $\mathfrak{g_{\mathrm{J}}}$, $\Psi_{J}(B)$ is the crystal associated with  a finite dimensional  $U_q(\mathfrak{g_{\mathrm{J}}})$-module, where $U_q(\mathfrak {g_{\mathrm{J}}})$ is  the associated quantum group with $\{\,\alpha_{i}\, ;\, i\, \in  \, J\}\,$.
 \end{definition}
\begin{proposition}[\cite{MK}]
 $\mathcal{M}_{c}$ is a semi-normal crystal. 
\end{proposition}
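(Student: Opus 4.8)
The plan is to verify that $\mathcal{M}_c$ satisfies the axioms of a semi-normal crystal, which by the definition above splits into two tasks: first, confirm that $\mathcal{M}_c$ is a crystal associated with the Cartan matrix $A$ (i.e.\ the maps $wt$, $\tilde e_i$, $\tilde f_i$, $\varepsilon_i$, $\varphi_i$ satisfy all the listed compatibility relations), and second, confirm the semi-normality condition that for each $i$ one has $\varepsilon_i(M)=\max\{n\mid \tilde e_i^{\,n}M\neq 0\}$ and $\varphi_i(M)=\max\{n\mid \tilde f_i^{\,n}M\neq 0\}$. Since the crystal operators $\tilde e_i,\tilde f_i$ act only on the $Y_i$-variables through multiplication by $A_i(n_e)$ or $A_i(n_f)^{-1}$, and since $A_i(m)$ contributes $+1$ to $y_i(m)$ and $+1$ to $y_i(m+1)$ (the $\langle h_j,\alpha_i\rangle$ exponents only touch $Y_j$ with $j\neq i$), the whole verification for a fixed $i$ reduces to a computation in the single-variable $\mathbb{Z}$-indexed sequence $(y_i(m))_{m\in\mathbb{Z}}$.

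First I would set up the bookkeeping that makes this reduction rigorous: for fixed $i$ and a monomial $M$, define the partial sums $S_i(m)=\sum_{k\le m} y_i(k)$, so that $\varphi_i(M)=\max_m S_i(m)$ and $\varepsilon_i(M)=\max_m\bigl(-(\,\text{total}-S_i(m)\,)\bigr)=\varphi_i(M)-\langle h_i,wt(M)\rangle$, since $\sum_k y_i(k)=\langle h_i,wt(M)\rangle$. This immediately yields the relation $\varphi_i(b)=\varepsilon_i(b)+\langle h_i,wt(b)\rangle$. Next I would compute how multiplying $M$ by $A_i(n_f)^{-1}$ alters the sequence: it decrements $y_i(n_f)$ and $y_i(n_f+1)$ each by $1$, hence decrements $S_i(m)$ by $1$ for all $m\ge n_f$ and leaves $S_i(m)$ unchanged for $m<n_f$. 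Using the defining property of $n_f$ (the \emph{minimal} index attaining the maximum of $S_i$), a short case analysis shows $\varphi_i(\tilde f_i M)=\varphi_i(M)-1$ and, dually, $\varepsilon_i(\tilde f_i M)=\varepsilon_i(M)+1$; the weight relation $wt(\tilde f_i M)=wt(M)-\alpha_i$ follows because $A_i(m)$ has weight $\alpha_i$. The relations for $\tilde e_i$ are obtained symmetrically using $n_e$, the \emph{maximal} attaining index.

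The step requiring the most care is the inversion axiom ``$\tilde f_i b=b'$ iff $b=\tilde e_i b'$,'' because it hinges on the indices $n_f$ and $n_e$ being compatible across the two operations. Concretely, if $\varphi_i(M)>0$ and $M'=\tilde f_i M=A_i(n_f)^{-1}M$, I must check that $\varepsilon_i(M')>0$ and that the maximizing index $n_e$ computed for $M'$ equals $n_f$, so that $\tilde e_i M'=A_i(n_f)M'=M$ recovers $M$. This is exactly where the two characterizations of $n_f$ in \eqref{nf def} — the $\min$ via $\varphi_i$ and via $\varepsilon_i$ — together with the observation $y_i(n_f)>0,\ y_i(n_f+1)\le 0$ do the work, and I expect this to be the main obstacle: one must track precisely how the maximizing argument of the partial-sum function shifts under the decrement and verify the two indices coincide.

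Finally, for semi-normality itself, I would argue that applying $\tilde f_i$ repeatedly decreases $\varphi_i$ by exactly $1$ each time until it reaches $0$ (at which point $\tilde f_i$ returns $0$), so $\varphi_i(M)$ equals the largest $n$ with $\tilde f_i^{\,n}M\neq 0$; the dual statement for $\varepsilon_i$ and $\tilde e_i$ follows by the same reasoning. This is essentially immediate once the single-step relation $\varphi_i(\tilde f_i M)=\varphi_i(M)-1$ has been established, so the bulk of the proof is really the crystal-axiom verification, and the semi-normality conclusion is a clean corollary.
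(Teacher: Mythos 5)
The paper gives no proof of this proposition at all---it is quoted from Kashiwara \cite{MK}---so your outline can only be judged against the standard argument it is reconstructing. Your strategy is the right one: fix $i$, reduce everything to the exponent string $(y_i(m))_{m\in\mathbb{Z}}$ and its partial sums $S_i(m)=\sum_{k\le m}y_i(k)$, verify the crystal axioms one at a time, and deduce semi-normality from the single-step relation $\varphi_i(\tilde{f}_iM)=\varphi_i(M)-1$. But there is a genuine gap: your bookkeeping for how $\tilde{f}_i$ changes the partial sums is wrong, and the error sits exactly underneath the inversion axiom---the one step you yourself call ``the main obstacle'' and do not carry out.

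Multiplying by $A_i(n_f)^{-1}$ decrements \emph{both} $y_i(n_f)$ and $y_i(n_f+1)$, so the partial sums of $M'=\tilde{f}_iM$ are
\[
S_i'(m)=\begin{cases} S_i(m) & m<n_f,\\ S_i(m)-1 & m=n_f,\\ S_i(m)-2 & m>n_f,\end{cases}
\]
not ``$S_i(m)-1$ for all $m\ge n_f$'' as you state. (Your version is even inconsistent with the weight relation $wt(\tilde{f}_iM)=wt(M)-\alpha_i$ asserted in the same sentence: for $m\gg 0$, $S_i(m)=\langle h_i,wt(M)\rangle$ must drop by $2$, not $1$.) The discrepancy is not cosmetic. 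The maximum $\varphi_i(M)$ of $S_i$ may be attained at several indices to the right of $n_f$, since $n_f$ is only the \emph{minimal} maximizer; it is precisely the drop by $2$ beyond $n_f$ that eliminates those competitors, making $n_f$ the \emph{maximal} maximizer of $S_i'$, i.e.\ $n_e(M')=n_f(M)$, which is what $\tilde{e}_i\tilde{f}_iM=M$ requires. With your version this fails: take $y_i(0)=1$, $y_i(1)=-1$, $y_i(2)=1$, $y_i(3)=-1$ (e.g.\ $M=Y_1(0)Y_1(1)^{-1}Y_1(2)Y_1(3)^{-1}$ with $i=1$), so $S_i=(1,0,1,0)$ on $m=0,1,2,3$, $\varphi_i(M)=1$, $n_f=0$. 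Subtracting $1$ on all $m\ge 0$ would give new sums $(0,-1,0,-1)$, whose maximal maximizer is $m=2$, so $\tilde{e}_i$ applied to $\tilde{f}_iM$ would multiply by $A_i(2)$ and not return $M$. The correct sums are $(0,-2,-1,-2)$, whose maximal maximizer is $m=0=n_f$, and the inversion goes through. So the proposal as written does not close: fix the displacement formula, carry out the deferred case analysis $n_e(\tilde{f}_iM)=n_f(M)$ (and dually $n_f(\tilde{e}_iM)=n_e(M)$), and then the remaining pieces of your outline---the relations among $wt$, $\varepsilon_i$, $\varphi_i$ and the semi-normality conclusion---are correct.
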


 For a family of integers ${(m_{i})}_{i\in I}$, let us set $c'\, =\, 
{({c'}_{ij})}_{i\ne j\in I}$, by ${c'}_{ij}=\, c_{ij}+m_{i}\, -m_{j}$. Then 
the map $Y_{i}( m )\mapsto \, Y_{i}( m+m_{i} )$ 
gives a crystal isomorphism  $\mathcal{M}_{c}\buildrel \sim\over\longrightarrow 
 \mathcal{M}_{c'}$.
Hence if the Dynkin diagram has no loop, 
the isomorphism class of the crystal $\mathcal{M}_{c}$ does not 
depend on the choice of $c$.

\begin{theorem}[{\cite{MK}}]
For a highest weight vector $M \in \mathcal{M}_{c}$, the 
connected component of  $\mathcal{M}_{c}$  containing $M$ is isomorphic to
$B(wt(M))$. 
\end{theorem}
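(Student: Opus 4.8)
The plan is to compare the connected component $\mathcal{M}(M)$ of $M$ with the crystal base $B(\lambda)$ of $V(\lambda)$, where $\lambda:=wt(M)$, by exhibiting an explicit crystal isomorphism. First I would record the consequences of $M$ being a highest weight vector. Since $\tilde{e}_i M=0$ for all $i$ and $\mathcal{M}_c$ is semi-normal, $\varepsilon_i(M)=\max\{n:\tilde{e}_i^{\,n}M\neq 0\}=0$; combined with the crystal axiom $\varphi_i(M)=\varepsilon_i(M)+\langle h_i,wt(M)\rangle$ and the fact that $\varphi_i(M)\geq 0$ (the defining maximum in \eqref{varepsilon} includes the empty tail sum $0$ as $m\to-\infty$), this gives $\langle h_i,\lambda\rangle=\varphi_i(M)\geq 0$ for every $i$, so $\lambda\in P_+$ and $B(\lambda)$ is defined.

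Next I would construct the comparison map. Define $\Psi:B(\lambda)\to\mathcal{M}_c$ on the generators of $B(\lambda)$ by $\Psi(\tilde{f}_{i_1}\cdots\tilde{f}_{i_l}u_\lambda):=\tilde{f}_{i_1}\cdots\tilde{f}_{i_l}M$, with the convention $\Psi(0)=0$. If this is well defined it is automatically a strict morphism of crystals, since it intertwines the $\tilde{f}_i$ by construction and sends $u_\lambda\mapsto M$ with matching weight, $\varepsilon_i$ and $\varphi_i$. Its image is then a subcrystal closed under $\tilde{e}_i$ and $\tilde{f}_i$, connected (being the image of the connected crystal $B(\lambda)$), and containing $M$; hence the image is exactly $\mathcal{M}(M)$. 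Thus the theorem reduces to two assertions: $\Psi$ is well defined, and $\Psi$ is injective.

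The heart of the matter is therefore to show that the monomial obtained by applying a sequence $\tilde{f}_{i_1}\cdots\tilde{f}_{i_l}$ to $M$ depends only on the element $\tilde{f}_{i_1}\cdots\tilde{f}_{i_l}u_\lambda$ of $B(\lambda)$, and that distinct elements of $B(\lambda)$ yield distinct monomials. The cleanest route I would take is to promote $\mathcal{M}_c$ from semi-normal to normal and then realize $\mathcal{M}(M)$ as the highest-weight subcrystal of a tensor product of crystal bases of genuine modules: using the crystal isomorphism $\mathcal{M}_c\cong\mathcal{M}_{c'}$ to normalize the variables, together with the monomial realizations $\mathcal{M}(Y_i(m))$ of the fundamental crystals $B(\Lambda_i)$, one factors a neighborhood of $M$ through an embedding into a (locally finite) tensor product $\bigotimes_k B(\Lambda_{i_k})$ that sends $M$ to a tensor of highest weight vectors. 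By the tensor product theorem this tensor product is the crystal base of the corresponding module, hence normal and a disjoint union of $B(\mu)$'s, and the subcrystal generated under the $\tilde{f}_i$ by a highest weight vector of weight $\lambda$ is exactly the summand $B(\lambda)$ in which it sits. Transporting this back identifies $\mathcal{M}(M)$ with $B(\lambda)$ and simultaneously yields both the well-definedness and the injectivity of $\Psi$.

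The main obstacle is precisely this embedding/normality step: proving that the \emph{product} of Laurent monomials (as opposed to a tensor product) faithfully records the sequence of Kashiwara operators up to the relations of $B(\lambda)$, i.e.\ that no collapse occurs when passing from $B(\lambda)$ to $\mathcal{M}_c$. Concretely one must control how the factors $A_i(n_f)^{-1}$ interact across different simple roots $i$ — this is where the combinatorics of the integers $c_{ij}$ and the exponents $\langle h_j,\alpha_i\rangle$ appearing in $A_i(m)$ enter — and verify the rank-two conditions guaranteeing that the monomial crystal behaves like the crystal of a module on each relevant subdiagram. Once normality is established, the highest-weight uniqueness packaged in the tensor product theorem closes the argument.
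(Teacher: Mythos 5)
First, a point of comparison: the paper itself gives \emph{no} proof of this statement — it is imported verbatim from Kashiwara \cite{MK}, together with the normality theorem (Theorem \ref{normality}), and the paper only records their consequences in Corollary \ref{M crystal }. So your attempt has to stand on its own, and as written it does not: it is circular exactly at its load-bearing step. Your plan hinges on embedding a neighborhood of $M$ into a tensor product $\bigotimes_k B(\Lambda_{i_k})$ ``using the monomial realizations $\mathcal{M}(Y_i(m))$ of the fundamental crystals,'' but the identification $\mathcal{M}(Y_i(m))\cong B(\Lambda_i)$ is itself an instance of the theorem being proved (applied to the highest weight monomial $Y_i(m)$), so it cannot be used as an ingredient.

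Second, the embedding is never constructed and has no natural candidate. A highest weight monomial need not be dominant: in type $C_2$, $M=Y_1(1)Y_1(3)^{-1}Y_1(5)$ satisfies $\varepsilon_1(M)=\varepsilon_2(M)=0$ and $wt(M)=\Lambda_1$, yet it is not a product of $Y_{i_k}(m_k)$'s, so ``sends $M$ to a tensor of highest weight vectors'' is undefined for it (the shift isomorphism $\mathcal{M}_c\cong\mathcal{M}_{c'}$ only translates the index of each variable and cannot repair this). Even for dominant $M=\prod_k Y_{i_k}(m_k)$, an element of the component $\mathcal{M}(M)$ carries no canonical factorization into elements of the $\mathcal{M}(Y_{i_k}(m_k))$'s: factorizations are non-unique, and the multiplication map $\bigotimes_k \mathcal{M}(Y_{i_k}(m_k))\to \mathcal{M}(Y_{i_1}(m_1))\cdots\mathcal{M}(Y_{i_l}(m_l))$ is neither injective nor a crystal morphism — this collapse is precisely the phenomenon the present paper studies (Theorem \ref{smy.deco.}), so it cannot be dismissed as a ``verification.'' You correctly name normality and no-collapse as ``the main obstacle,'' but the proposal never overcomes it; a plan whose key step is deferred is not a proof. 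Finally, the route is redundant where it is sound: you ultimately invoke normality of $\mathcal{M}_c$ (Theorem \ref{normality}, also quoted from \cite{MK}), but once normality is granted the theorem follows at once in the paper's finite-type setting with no tensor-product detour: $\mathcal{M}_c$ decomposes as a disjoint union of crystals $B(\mu)$ (Corollary \ref{M crystal }(3)), hence the component of $M$ is some $B(\mu)$, and since $\tilde e_i M=0$ for all $i$, $M$ is that component's unique highest weight element, forcing $\mu=wt(M)$. (A smaller gap in your reduction step: well-definedness plus injectivity of $\Psi$ is not enough; you also need $\tilde f_i b=0\Rightarrow\tilde f_i\Psi(b)=0$ and $\tilde e_i b=0\Rightarrow\tilde e_i\Psi(b)=0$, i.e.\ the matching of $\varphi_i$ and $\varepsilon_i$, or the image need not be a subcrystal equal to the whole connected component.)
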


\begin{theorem}[{\cite{MK}}] \label{normality} 
$\mathcal{M}_{c}$  is a normal crystal.
\end{theorem}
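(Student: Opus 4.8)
The plan is to deduce normality from the two preceding facts: that $\mathcal{M}_c$ is semi-normal, and that the connected component of any \emph{highest weight monomial} $M$ (a monomial with $\varepsilon_i(M)=0$ for all $i\in I$) is isomorphic to $B(wt(M))$. Granting these, the whole argument reduces to a single claim: every connected component of $\mathcal{M}_c$ contains a highest weight monomial. Indeed, suppose the claim holds. Then every connected component $C$ is isomorphic to $B(\lambda)$ for a dominant $\lambda\in P_{+}$ (here $\lambda=wt(M)$ is dominant because $\langle h_i,\lambda\rangle=\varphi_i(M)-\varepsilon_i(M)=\varphi_i(M)\ge 0$ for a highest weight $M$). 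Since $\mathfrak{g}$ is of finite type, $V(\lambda)$ is a finite-dimensional $\Uq$-module; as a $U_q(\mathfrak{g}_J)$-module it is still finite-dimensional, and because the Kashiwara operators $\tilde{e}_i,\tilde{f}_i$ for $i\in J$ involve only $e_i,f_i$ with $i\in J$, the pair $(L(\lambda),B(\lambda))$ is simultaneously a crystal base of the restricted module $\mathrm{Res}^{\mathfrak{g}}_{\mathfrak{g}_J}V(\lambda)$. Hence $\Psi_J(C)\cong\Psi_J(B(\lambda))$ is the crystal associated with a finite-dimensional $U_q(\mathfrak{g}_J)$-module for every such $J$; as $\mathcal{M}_c$ is the disjoint union of its components, $\mathcal{M}_c$ is normal.

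So the work is concentrated in proving the claim. First I would fix $M\in\mathcal{M}_c$ and show that applying raising operators $\tilde{e}_i$ (for whichever $i$ has $\varepsilon_i>0$) must terminate; the terminal monomial is then highest weight and lies in the component of $M$. Semi-normality guarantees that each individual $\tilde{e}_i$-string is finite, so the only way termination could fail is through an infinite alternation of different indices. To rule this out, choose $\rho^\vee\in P^\vee$ with $\langle\rho^\vee,\alpha_i\rangle=1$ for all $i$ (available since $\mathfrak{g}$ is of finite type); each application of some $\tilde{e}_i$ raises $wt$ by $\alpha_i$ and hence strictly increases $\langle\rho^\vee,wt(\,\cdot\,)\rangle$ by $1$. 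It therefore suffices to bound $\langle\rho^\vee,wt(M')\rangle$ from above as $M'$ runs over the component of $M$. This is exactly where the explicit monomial structure, rather than abstract semi-normality, must be used: I would exploit that $\tilde{e}_i$ multiplies by $A_i(n_e)=Y_i(n_e)Y_i(n_e+1)\prod_{j\ne i}Y_j(n_e+c_{ji})^{\langle h_j,\alpha_i\rangle}$ with $y_i(n_e)\ge 0$, $y_i(n_e+1)<0$ and $\langle h_j,\alpha_i\rangle\le 0$ for $j\ne i$, to control how the exponents, and hence the weight, can grow along a connected component of monomials of finite support.

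I expect this boundedness/termination step to be the main obstacle: abstract semi-normality alone does \emph{not} force connected components of a finite-type crystal to be finite, so the finiteness of the components of $\mathcal{M}_c$ is a genuine feature of Nakajima's monomials that must be extracted from the formula for $A_i(m)$. As a cross-check and alternative route there is the Levi restriction: the forgetful map $p\colon\mathcal{M}_c(\mathfrak{g})\to\mathcal{M}_{c'}(\mathfrak{g}_J)$, $\prod_{i\in I,m}Y_i(m)^{y_i(m)}\mapsto\prod_{i\in J,m}Y_i(m)^{y_i(m)}$ with $c'=(c_{ij})_{i\ne j\in J}$, intertwines $\tilde{e}_i,\tilde{f}_i,\varepsilon_i,\varphi_i$ for $i\in J$ (because $p(A_i^{\mathfrak{g}}(m))=A_i^{\mathfrak{g}_J}(m)$ and these quantities only see the variables $Y_i$ with $i\in J$); applying the highest-weight-component theorem to $\mathfrak{g}_J$ then identifies each $J$-component with some $B_J(\mu)$, once a $J$-highest weight monomial is known to exist, which again returns to the same termination claim. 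I would present the first line as the main argument, since it needs only the standard fact that restricting the crystal base of a finite-dimensional module to a Levi subalgebra yields the crystal base of the restricted module, and it avoids having to verify that $p$ is injective on components.
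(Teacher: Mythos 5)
First, note that the paper itself offers no proof of Theorem \ref{normality}: it is quoted from Kashiwara \cite{MK}, so there is no internal proof to compare your argument against, and I am judging the proposal on its own. Your overall reduction is correctly organized: semi-normality, together with the (also quoted) theorem that the connected component of a highest weight monomial is isomorphic to $B(wt(M))$, does reduce normality to the single claim that every connected component of $\mathcal{M}_{c}$ contains a highest weight monomial; and your Levi-restriction step --- that $(L(\lambda),B(\lambda))$ is simultaneously a crystal base of $V(\lambda)$ viewed as a $U_q(\mathfrak{g}_J)$-module, so that $\Psi_J(B(\lambda))$ is the crystal of a finite-dimensional $U_q(\mathfrak{g}_J)$-module --- is standard and correct (in this paper's setting $\mathfrak{g}$ is of finite type, so every $J\subseteq I$ is covered). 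The observation that termination of raising sequences is equivalent to boundedness of $\langle\rho^\vee,wt(\cdot)\rangle$ on each component is also fine.

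The genuine gap is that this last claim --- boundedness, equivalently termination --- is never proved; it is only announced (``I would exploit \dots to control how the exponents \dots can grow''). That claim is not a technical afterthought: it is essentially the entire content of the cited theorem. As you yourself point out, no abstract argument can supply it, since infinite connected semi-normal crystals exist already in rank $2$ (zigzag-type examples), so finiteness must come from the monomial formulas; but the formulas do not yield it by any obvious monotonicity. Concretely, applying $\tilde{e}_{i}$ multiplies by $A_{i}(n_{e})$, which can extend the support of the monomial one step to the left (this can happen when $\varphi_{i}(M)=0$), and which inserts factors $Y_{j}(n_{e}+c_{ji})^{\langle h_{j},\alpha_{i}\rangle}$ with $\langle h_{j},\alpha_{i}\rangle\le 0$, i.e.\ creates new negative exponents at adjacent colors. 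Hence neither the support window, nor the number or total size of negative exponents, nor any similarly naive quantity is controlled along raising sequences, and a genuinely new idea is required exactly at the point where your proposal stops. Your alternative route via the projection to $\mathcal{M}_{c'}(\mathfrak{g}_J)$ returns, as you concede, to the same unproven claim (and for a general Kac--Moody $\mathfrak{g}$, the setting of \cite{MK}, components need not contain highest weight monomials at all, so that projection would in any case be unavoidable). As it stands, the proposal is a clean reduction of the cited theorem to another unproven statement carrying essentially all of its depth, not a proof.
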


\begin{corollary}\label{M crystal }
\begin{enumerate}
\item For each $i \in  I$, $\mathcal{M}$  is isomorphic to 
a crystal graph of a $U_{q}(\mathfrak{sl}_2) $-module.
\item Let $M$ be a monomial with weight $\lambda $, 
such that $\tilde{ e}_{i}M= 0$ for all $ i \in  I$, and let $\mathcal{M}(\lambda )\, $be the connected component of $\mathcal{M}$ containing $M$. 
Then there exists a crystal isomorphism
\[
\mathcal{M}( \lambda ) \buildrel\sim\over\longrightarrow\,
B( \lambda )\qquad\quad
(M\longmapsto u_{\lambda }).
\]
\item
 If $\mathfrak{g}$ is a semi-simple Lie algebra, then $\mathcal{M}_{c}$ is an infinite direct sum of irreducible crystal $B(\lambda)$'s, namely, 
 any connected component in $\mathcal{M}_{c}$ is isomorphic to the crystal
$B(\lambda)$ for some $\lambda\in P_+$.
\end{enumerate}
\end{corollary}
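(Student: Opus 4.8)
The plan is to derive all three parts from the two theorems of Kashiwara quoted just above, namely the highest-weight identification (the connected component of a highest weight monomial $M$ is isomorphic to $B(wt(M))$) together with the normality of $\mathcal{M}_c$ (Theorem \ref{normality}), by specialising the subset $J\subseteq I$ that appears in the definition of a normal crystal. In each case the work is to choose $J$ correctly and to record the elementary bookkeeping relating $\varepsilon_i$, $\varphi_i$ and $wt$.

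For (1), I would take $J=\{i\}$. Since $\{\alpha_i\}$ is the simple root system of a copy of $\mathfrak{sl}_2$, the definition of a normal crystal (equivalently the already-recorded semi-normality) says exactly that $\Psi_{\{i\}}(\mathcal{M})$ is the crystal of a finite-dimensional $U_q(\mathfrak{sl}_2)$-module, which is the assertion. For (2), I would invoke the highest-weight theorem directly: as $\tilde{e}_i M=0$ for every $i$, the monomial $M$ is a highest weight vector, so its connected component $\mathcal{M}(\lambda)$ is isomorphic to $B(wt(M))=B(\lambda)$. Two short remarks finish it. First, $\lambda$ is dominant: since semi-normality gives $\varepsilon_i(M)=0$ and the crystal relation reads $\varphi_i=\varepsilon_i+\langle h_i,wt\rangle$, we get $\langle h_i,\lambda\rangle=\varphi_i(M)\ge 0$. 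Second, the isomorphism must send $M$ to $u_\lambda$, because a morphism out of a connected crystal generated by its unique highest weight vector is pinned down by the image of that vector, and $u_\lambda$ is the unique highest weight vector of $B(\lambda)$.

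For (3), the decisive observation is that when $\mathfrak{g}$ is semisimple the full index set $J=I$ is itself admissible in the definition of a normal crystal. Hence Theorem \ref{normality} applies with $J=I$ and tells us that $\mathcal{M}_c=\Psi_I(\mathcal{M}_c)$ is a disjoint union of crystals of finite-dimensional $U_q(\mathfrak{g})$-modules; in particular each connected component is finite and its weights are bounded above. Concretely, I would fix $\rho^\vee\in P^\vee\otimes\mathbb{Q}$ with $\langle\rho^\vee,\alpha_i\rangle=1$ for all $i$ (available because the Cartan matrix is invertible in finite type) and use $\langle\rho^\vee,wt(\tilde{e}_i b)\rangle=\langle\rho^\vee,wt(b)\rangle+1$: boundedness then forces any chain of raising operators to terminate, so every component $C$ contains a monomial $M$ with $\tilde{e}_i M=0$ for all $i$. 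Part (2) identifies $C$ with $B(wt(M))$ for the dominant weight $wt(M)\in P_+$, and since distinct components are disjoint, $\mathcal{M}_c$ is the direct sum of these $B(\lambda)$.

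The step I expect to carry the real weight is this boundedness/termination: it is exactly where finiteness of $\mathfrak{g}$ enters, via finiteness of the Weyl group (so that the $W$-stable weight set of $C$, lying in a single coset of the root lattice, attains a maximum of $\langle\rho^\vee,\cdot\rangle$) or equivalently via invertibility of the Cartan matrix. Without it, as for a general Kac--Moody $\mathfrak{g}$, a connected component of $\mathcal{M}_c$ can fail to contain any highest weight monomial, which is precisely why the semisimplicity hypothesis is needed in (3) but not in (1) and (2).
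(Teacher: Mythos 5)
Correct, and essentially the paper's approach: the paper offers no written proof at all, presenting the statement as an immediate corollary of the two Kashiwara theorems quoted directly above it (the highest-weight-component theorem and normality of $\mathcal{M}_c$), which is precisely the derivation your argument spells out. The details you supply beyond that --- dominance of $\lambda$ via semi-normality and $\varphi_i=\varepsilon_i+\langle h_i,\mathrm{wt}\rangle$, and the $\rho^\vee$-boundedness argument producing a highest weight monomial in every connected component when $\mathfrak{g}$ is semisimple --- are sound fillings of the gaps the paper leaves implicit (for part (3) one could even shortcut your termination argument, since normality with $J=I$ already exhibits each component inside the crystal of a finite-dimensional module, hence as some $B(\lambda)$).
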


\begin{remark}
Note that the condition $\tilde{ e}_{i}M= 0$
for any $ i \in  I$, is called {\it the highest weight} condition, which is equivalent to the condition $\varepsilon_{i}( M )=0$   
for any $ i \in  I$.
\end{remark}


\newcommand\cv{i_N}
\newcommand\er{i_1}
\newcommand\zx{\bar{1}}
\newcommand\sd{\bar{2}}
\newcommand\qw{\bar{n}}
\newcommand\tmp{\bar{i}}

\section{Monomial Realization of the Fundamental Crystals of Type $C_{n}$}
\subsection{Tableaux Realization of Fundamental Crystals of Type $C_n$}
   
\noindent\indent
\normalfont{In this subsection, we recall several results on the crystals for $U_{q}(C_{n})$-modules following \cite{TN}.\cite{MK-N}. }

        Let $I\, =\, 
                       \{1,\,\mathellipsis ,\, n\}\,$ and let $A\, =\, {(a_{ij})}_{i,j\in I}$ be the 
                        Cartan matrix of type $C_{n}$. Here, the entries of $A$ are given by 
            \[
                         a_{ij}=\left\{ {\begin{array}{rcl}
                         2&\mbox{if}& i=j, \\ 
                        -1&\mbox{if}&\left| i-j \right|=1\,\, and \,\, ( i, j )\ne(n-1, n),\\ 
                        -2&\mbox{if}&( i, j )=(n-1, n), \\ 
                        0&  &\mbox{otherwise}.
                                                  \end{array}} \right.
               \]
Let $(\epsilon_1,\cdots,\epsilon_n)$ be the orthonormal base of the dual of the Cartan subalgebra $t^*$ of $C_n$ such that $\alpha_{i}=\epsilon_{i}-\epsilon_{i+1}\,(1\le i<n)$ and $\alpha_{n}=2\epsilon_n$ are the simple roots. Hence, $\alpha_{n}$ is the long root and $\alpha_{1},\cdots,\alpha_{n-1}$ are short roots. 
$\{\Lambda_i=\epsilon_1+\cdots+\epsilon_i\,|\,1\le i\le n\}$ is the set of 
fundamental weights, which is the dual basis of the simple co-roots $\{h_i\}_{1\le i\le n}$, namely, $\lan h_i,\Lm_j\ran=\del_{i,j}$.

The crystal graph $B({\Lambda_1})$ of the vector representation $V(\Lambda_1)$ is described as follows. It is labeled by $\{\Yboxdim{16pt}\young(i), \Yboxdim{16pt}\young(\tmp);\,1\le i\le n\}$ where $\Yboxdim{16pt}\young(i)$ has weight $\epsilon_{i}$ and $\Yboxdim{16pt}\young(\tmp)$ has weight $-\epsilon_{i}$. Its crystal graph is
\begin{align}\label{c.graph Cn}
\young(1)\buildrel{1}\over\longrightarrow\young(2)\buildrel{2}\over\longrightarrow\cdots\buildrel{n-1}\over\longrightarrow\young(n)\buildrel{n}\over\longrightarrow\young(\qw)\buildrel{n-1}\over\longrightarrow\cdots\buildrel{2}\over\longrightarrow\Yboxdim{16pt}\young(\sd)\buildrel{1}\over\longrightarrow\young(\zx).
\end{align} 
The connected component of the crystal graph $B(V_{\square})^{\otimes N}$ $(1\le N\le n)$ containing $u_{\Lambda_{N}}=\Yboxdim{16pt}\young(1)\otimes\young(2)\otimes\cdots\otimes\young(N)$ is isomorphic to $B(\Lambda_{N})$.

We give the linear order on $\mathcal{ I}:=\{i, \bar{i};\,\,1\le i\le n\}$ by
\begin{align}\label{order of i}
1<2<\cdots<n<\bar{n}<\cdots<\bar{2}<\bar{1}. 
\end{align}
Here for $i\in \mathcal{ I}$, if $i\in\{1,\cdots,n\}$ (resp.$\{\bar{n},\cdots,\bar{1}\}$) we call $i$ positive (resp. negative).
By setting  $[i_{1}, i_{2},\cdots, i_{N}]=\Yboxdim{16pt}\young(\er)\otimes \cdots\otimes\Yboxdim{16pt}\young(\cv)$, we have
\begin{align}\label{tableaux con.}
B(\Lambda_{N})=\left\{[i_{1}, i_{2},\cdots, i_{N}]\in B(V_{\square})^{\otimes N} \left| {\begin{array}{l}
                (1) 1\le i_{1}<\cdots <i_{N}\le \bar{1},\\ 
                (2)\mbox{ if} \,\,\,i_k=p \,\,\,\mbox{and} \,\,\,i_{\ell}=\bar{p},\\
                       \mbox{then} \,\,\,k+(N-\ell+1)\le p.
                                 \end{array}} \right. \right\}.
\end{align}

\begin{lemma}\label{decomposition}
            We have the following decomposition for the tensor product
            of fundamental crystals $B(\Lambda_{p})$ and $B(\Lambda_{q})$ of type $C_n$:
           \begin{center}
            $B( \Lambda_{p} )\otimes  B( \Lambda_{q} )\cong \bigoplus\limits_{ (\divideontimes)}B( \Lambda_{a}  +\Lambda _{c} )$,
          \end{center}
where
\[
( \divideontimes)\left\{\begin{array}{l}
0\le a\le c\le n,\\
a+q\le p+c,\\
a+p\le q+c,\\
(p+q)-(a+c)\in 2\mathbb{ Z}_{\geq0},\\
a\le p,\,\,\frac{p+q+c-a}{2}\le n.
  \end{array}\right  .
\]
Note that we suppose that $\Lambda_{0}$ means $0$, thus, $\Lm_0+\Lm_i=\Lm_i$.
          \end{lemma}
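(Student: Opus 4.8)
The plan is to decompose the tensor product $B(\Lambda_p)\otimes B(\Lambda_q)$ of fundamental crystals of type $C_n$ by explicitly identifying all highest-weight vectors. Since $\mathcal{M}_c$ (and hence any crystal of the form $B(\lambda)\otimes B(\mu)$) is a normal crystal by Theorem \ref{normality}, the number of connected components equals the number of highest-weight elements, i.e. elements $b_1\otimes b_2$ annihilated by every $\tilde e_i$. So the combinatorial heart of the problem is to enumerate these.

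\textbf{First} I would use the tableaux realization \eqref{tableaux con.}: write $b_1=[i_1,\dots,i_p]\in B(\Lambda_p)$ and $b_2=[j_1,\dots,j_q]\in B(\Lambda_q)$ as strictly increasing words in the ordered alphabet $\cI=\{1<\cdots<n<\bar n<\cdots<\bar 1\}$, subject to the type-$C_n$ admissibility condition in \eqref{tableaux con.}. Using the tensor product rule \eqref{tenor rule} together with the single-box crystal graph \eqref{c.graph Cn}, the condition $\tilde e_i(b_1\otimes b_2)=0$ for all $i$ translates, via the signature (bracketing) rule, into a purely combinatorial condition on the pair of words. The standard technique here is to reduce the highest-weight condition to a lattice-word / Yamanouchi-type condition, adapted to the symplectic ($C_n$) setting where barred letters contribute negative weight and the long root $\alpha_n=2\epsilon_n$ forces a doubled counting at the node $n$.

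\textbf{Next}, for each highest-weight pair $b_1\otimes b_2$ I would compute its weight and show it has the form $\Lambda_a+\Lambda_c$ for some $0\le a\le c\le n$. Reading off the weight of a highest element and matching it against $\Lambda_a+\Lambda_c=(a+c)\,\epsilon$-data then produces the inequalities in $(\divideontimes)$: the bounds $a+q\le p+c$ and $a+p\le q+c$ come from the interleaving constraints between the two increasing words (one cannot create too large a gap on either side), the parity condition $(p+q)-(a+c)\in 2\mathbb{Z}_{\ge0}$ reflects that each contraction in the symplectic branching removes a pair $\epsilon_k-\epsilon_k$ (so the number of boxes drops by an even amount), and $\frac{p+q+c-a}{2}\le n$ together with $c\le n$, $a\le p$ encode that the tableau shape stays within $n$ rows. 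I expect an inductive or direct bijective argument matching highest-weight pairs to the admissible parameter set $(a,c)$; one convenient route is to invoke the known Littlewood--Richardson / branching rule for the symplectic group $Sp_{2n}$ on products of fundamental representations (which is classical), and then verify that the crystal-theoretic count agrees.

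\textbf{The main obstacle} will be handling the long root $\alpha_n$ correctly: because $\alpha_n=2\epsilon_n$ and $a_{n-1,n}=-2$, the $\{n\}$- and $\{n-1,n\}$-crystal structures behave differently from the simply-laced case, so the signature rule at node $n$ and the interaction between nodes $n-1$ and $n$ must be tracked carefully. This is exactly where the type-$C_n$ admissibility condition $(2)$ in \eqref{tableaux con.} (the constraint $k+(N-\ell+1)\le p$ when $i_k=p$, $i_\ell=\bar p$) enters, and it is what ultimately produces the non-simply-laced features of the parameter region $(\divideontimes)$. I would therefore treat the contribution at the folded node separately, verifying both that every admissible $(a,c)$ yields a genuine highest-weight vector and that no spurious highest-weight vectors arise from the doubled long-root action, and finally confirm multiplicity one for each $B(\Lambda_a+\Lambda_c)$ appearing.
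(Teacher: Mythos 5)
Your plan is correct and follows essentially the same route as the paper: the paper likewise enumerates the highest-weight vectors of $B(\Lambda_p)\otimes B(\Lambda_q)$ in the tableaux realization \eqref{tableaux con.}, reducing to $u=[1,2,\dots,p]$ via the tensor product rule and then classifying the admissible second factors $v$ (citing \cite{TN} for the three possible forms of $v$), and it reads off the conditions $(\divideontimes)$ from the resulting weights exactly as you outline. The only difference is one of economy, not of method: the paper invokes prior results of \cite{TN} to constrain $v$ instead of re-deriving the signature-rule combinatorics from scratch.
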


\begin{proof}
Let $u\otimes v\in B( \Lambda_{p} )\otimes  B( \Lambda_{q} )$ be a highest weight vector. We know that  $u$ is the highest weight vector \cite{TN} and then we have  $u=[1,2,\cdots,p]$. Set $v=[j_{1}, j_{2},\cdots, j_{q}]\in B(\Lambda_{q})$. Here, 
we have the following three cases for $v$ \cite[Lemma 4.12, Theorem 6.3.1]{TN} :
\[
(1) \,\,j_1=1\quad\quad (2)\,\,j_1=p+1\quad\quad (3)\,\,j_1=\bar{p}.
\]
\begin{itemize}
\item [(1)]   If $j_1=1$, we find that $v$ should be in the form 
\[
v=[1, \ldots, a,p+1,\ldots,b,\bar{b},\ldots,\overline{c+1},\bar{a},\ldots,\bar{d}]. 
\]
But by (2) in (\ref{tableaux con.}), $\bar{a},\ldots,\bar{d}$ cannot exist. Then we can write $v$ as  
\begin{align}\label{v form}
v=[1, \ldots, a,p+1,\ldots,b,\bar{b},\ldots,\overline{c+1}], 
\end{align}
and we obtain the following conditions on $a,p,b$ and $c$
\begin{equation}
\begin{array}{c}
1\le a\le c\le b\le n,\quad a\le p\le b,\quad q\le b, 
\quad a+(b-p)+(b-c)=q,\\
(p+q)-(a+c)\in 2\mathbb{ Z}_{\geq0},
\end{array}
\label{cond-fun}
\end{equation}
where the condition $q\le b$ is derived from (2) in (\ref{tableaux con.}). 
Note that other conditions from (2) in (\ref{tableaux con.}) 
$e.g.$, $q-2\le b-1$, $q-4\le b-2,\ldots$
are satisfied a priori from the condition $q\le b$. 
The condition $(p+q)-(a+c)\in 2\mathbb{ Z}_{\geq0}$ is derived 
from $a+c\le p+q$ and $2b=p+q+c-a\in 2{\mathbb Z}_{\geq 0}$, which is obtained from 
$q=length(v)=a+(b-p)+(b-c)$. We also get $p\leq b\Leftrightarrow p+a\leq q+c$, 
$q\leq b\Leftrightarrow a+q\leq p+c$ and $b\leq n\Leftrightarrow \frac{p+q-a+c}{2}\leq n$.
Thus,  for $v$  with the condition \eqref{cond-fun},  $u\otimes v$ 
is a highest weight vector with the weight $\Lambda_{a}+\Lambda _{c}$. 
\item [(2)] If $j_1=p+1$, then we can write $v$ as $v=[p+1,\ldots,b,\bar{b},\ldots,\overline{c+1}]$. By (2) in (\ref{tableaux con.}) we can obtain the following conditions on $p,b$ and $c$
\[
p< b, \quad c\le b\le n, \quad q\le b, \quad (b-p)+(b-c)=q,
\]
for such $v$, $u\otimes v$ is a highest weight vector with the weigh $\Lambda _{c}$. Indeed, by the condition $c\leq b$ and $2b=p+q+c$, we have $p+q-c\in2\bbZ_{\geq 0}$. We also get 
\[
p< q+c, \quad q\le p+c, 
\]
by $p\leq b$ and $q\leq b$. 
Here we know that these conditions are the ones in (1) with $a=0$.
\item [(3)]  If $j_1=\bar{p}$, then we can write $v$ as $v=[\bar{p},\cdots,\overline{c+1}]$. 
Hence, we obtain the following conditions on $p$ and $c$
\[
c\le p,\quad p-c=q,
\]
for such $v$. Then if $p\geq q$, we have $v=[\bar{p},\overline{p-1},\cdots, \overline{p-q+1}]$ and that $u\otimes v$ is a highest weight vector with the weight $\Lambda _{c}$. Here we know that these conditions are the special case of (1) by setting $a=0$ and $b=p$. Now, we obtain the condition $(\divideontimes)$ in Lemma
\ref{decomposition} 
\end{itemize}
\end{proof}

\subsection{Monomial Realization of the Fundamental Crystals of Type $C_{n}$}
                 
The result in this subsection follows \cite{BCD}.
\noindent\indent
We take the set $C\, ={(c_{ij})}_{i,j\in I,\, i\ne j}$ to be 
\begin{center}
$c_{ij}=\left\{ {\begin{array}{rcl}
0& \mbox{if}& i>j , \\ 
1 &\mbox{if}& i<j,
\end{array}} \right.$
\end{center}
and set ${Y_{0}(m)}^{\pm 1}={Y_{n+1}(m)}^{\pm 1}
=1$ for all $ m\in  \mathbb{ Z}$. Then for $i\ \in  I$
and $m\in  \mathbb{ Z}$,  we have
\[
A_{i}\, (m)\, =\begin{cases}
Y_{i}(m)Y_{i}(m\, +\, 1){Y_{i-1}(m\, +\,1)}^{-1}{Y_{i+1}(m)}^{-1}&
i\ne n,\\
Y_{n}(m)Y_{n}(m\, +\, 1){Y_{n-1}(m\, +\, 1)}^{-2}&i=n.
\end{cases}
   \]

  We focus on the fundamental crystal  $B( \Lambda_k )\cong\mathcal{M}( Y_{k}( m ) )$ ($ {m\in  \mathbb{ Z}}$).

\begin{definition}\label{XXX}
For $ i \in I$  and  ${m\in \mathbb{ Z}}$, we introduce new variables
\[
\begin{array}{rcl} X_{i}(m)=& \frac{Y_{i}(m)}{Y_{i-1}(m +1)} &\mbox{if}\,\,\,\,\,  i\in\{\,1,\ldots,n\},\mbox{i.e,\,\,} i\,\,\mbox{is positive},\\ 
X_{\bar{i}}(m)=&\frac{{Y_{i-1}(m+n-i+1)}}{Y_{i}(m+n-i+1)} &\mbox{if}\,\,\,\,\,   \bar{i}\in\{\, \bar{n},\ldots,\bar{1}\},\mbox{i.e,\,\,} \bar{i}\,\,\mbox{is negative}, \end{array}
\]
which will be called $X$-variable. Note that we set
$X_{n+1}(m)=\frac {1}{Y_{n}(m+1)}$ and $X_{\overline{n+1}}(m)={Y_{n}(m)}$ for any $m\in\bbZ$. 
\end{definition}     
We have the following explicit description of the monomial realization 
${\mathcal M}(Y_k(N))$ of the fundamental crystal $B(\Lm_k)$ ($k\in I$):
\begin{proposition}[\cite{BCD}]\label{general form by X}
For $k\in I$  and $N\in{\mathbb Z}_{>0}$ we have 
\begin{align}\label{g.f}
\mathcal{M}(Y_{k}(N)) =\left\{X_{i_{1}}( k+N-1 )X_{i_{2}}( k+N-2 )
\cdots X_{i_{k}}(N)\vert i_j\in \mathcal{ I}, 
1\le i_{1} <i_{2}<\cdots < i_{k} \le \bar{1}\}\right..
\end{align}
\end{proposition}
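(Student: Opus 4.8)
The plan is to prove Proposition \ref{general form by X} by connecting the abstract tableaux description of $B(\Lm_k)$ in \eqref{tableaux con.} with the concrete monomial realization, using the change of variables in Definition \ref{XXX}. The key observation is that the crystal graph \eqref{c.graph Cn} of the vector representation $B(\Lm_1)$ has its vertices labelled by $\mathcal I=\{1,\dots,n,\bar n,\dots,\bar 1\}$, and each label $i_\ell$ should correspond precisely to an $X$-variable $X_{i_\ell}(\,\cdot\,)$. So first I would establish the case $k=1$: I would verify directly, by computing $\tilde f_j$ on the monomials via the rule \eqref{varepsilon} and the explicit $A_i(m)$ above, that $\mathcal M(Y_1(N))$ consists exactly of the monomials $X_{i_1}(N)$ for $i_1\in\mathcal I$, arranged along a single $\mathfrak{sl}_2$-type string for each color so as to reproduce \eqref{c.graph Cn}. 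Concretely, one checks that multiplying $X_i(m)$ by $A_j(\cdot)^{-1}$ sends the label $i$ to its successor in the linear order \eqref{order of i} exactly when $j$ matches the arrow color, which is the content of the vector-representation crystal graph.

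Next I would treat general $k$ by relating $\mathcal M(Y_k(N))$ to a product (in the monomial sense) of shifted copies of the vector crystal. The natural guess, forced by the highest weight $u_{\Lm_k}=\young(1)\otimes\cdots\otimes\young(k)$ in Theorem's tableaux statement, is that the highest monomial $Y_k(N)$ factors as $X_1(k+N-1)X_1(k+N-2)\cdots X_1(N)$ after the substitution — i.e.\ as the product of the highest monomials of $k$ vector crystals placed at positions $N,N+1,\dots,k+N-1$. I would then show that applying the Kashiwara operators $\tilde f_j$ to such a product, via the monomial rule, acts on exactly one factor at a time and increments exactly one index $i_\ell$ in the linear order, and that the constraint $1\le i_1<i_2<\cdots<i_k\le\bar 1$ is preserved and exhausted. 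Here Corollary \ref{M crystal } (normality and the isomorphism $\mathcal M(\lambda)\cong B(\lambda)$ for a highest monomial) is what guarantees the connected component through $Y_k(N)$ is isomorphic to $B(\Lm_k)$, so it suffices to match the underlying sets and the highest vectors.

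The main obstacle I expect is controlling the interaction between distinct factors when $\tilde f_j$ is applied to the product monomial: because $\varphi_j$ and $\varepsilon_j$ in \eqref{varepsilon} are defined by a max over partial sums of the exponents $y_i(m)$, the choice of which factor gets modified (equivalently, the value $n_f$ in \eqref{nf def}) depends on how the exponents of neighboring $X$-variables overlap across the shifted positions. I would need to verify carefully — using the explicit forms of $A_i(m)$ and the staggering of the arguments $k+N-1,k+N-2,\dots,N$ — that this max is always achieved at the intended factor, so that the strict-inequality condition $i_1<\cdots<i_k$ is exactly what prevents two adjacent $X$-variables from cancelling or from jointly triggering a single operator. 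A clean way to organize this is an induction on $k$: assuming \eqref{g.f} for $k-1$, multiply by one more shifted vector-crystal factor and check that the tensor-product-like combinatorics of \eqref{tenor rule} is faithfully mirrored by the monomial operators, with the ordering condition arising from the tableaux condition (1) in \eqref{tableaux con.}. The anti-dominant (negative) labels and the special variables $X_{n+1}$, $X_{\overline{n+1}}$ require a separate boundary check, since condition (2) of \eqref{tableaux con.} must be seen to be automatic here (as $k\le n$ single boxes cannot force a $p/\bar p$ collision).
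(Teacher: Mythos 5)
First, a point of reference: the paper itself contains no proof of Proposition \ref{general form by X} — it is quoted from \cite{BCD} — so your proposal has to stand on its own merits, and it does not, for two reasons.

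The smaller problem is the factorization on which you build the induction. The product of the \emph{highest} monomials of $k$ shifted vector crystals is $X_1(k+N-1)X_1(k+N-2)\cdots X_1(N)=Y_1(k+N-1)Y_1(k+N-2)\cdots Y_1(N)$, a monomial of weight $k\Lambda_1$, not $\Lambda_k$. The correct factorization, stated in the paper just after Definition \ref{XXX}, is $Y_k(N)=X_1(k+N-1)X_2(k+N-2)\cdots X_k(N)$, whose $j$-th factor is the \emph{$j$-th} element (not the highest one) of the vector crystal shifted to position $k+N-j$; this descent is exactly what forces the strictly increasing indices, so the slip is not cosmetic.

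The fatal problem is your closing claim that condition (2) of \eqref{tableaux con.} ``must be seen to be automatic.'' It is false: for any $k\ge 2$ the sequence $(i_1,i_2,\dots)=(1,\bar 1,\dots)$ satisfies the ordering condition (1) but violates (2), so the index set of \eqref{g.f} is genuinely larger than the tableaux crystal — it has $\binom{2n}{k}$ sequences against $\dim B(\Lambda_k)=\binom{2n}{k}-\binom{2n}{k-2}$. The proposition is nevertheless an equality of \emph{sets of monomials}, but only because distinct index sequences can yield the same monomial, through the relations $X_i(p)X_{\bar i}(p-n+i)=X_{i+1}(p)X_{\overline{i+1}}(p-n+i)$ of \eqref{equality}; this is precisely what the Remark after Proposition \ref{Y forms} warns about. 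Concretely, for $k=2$ and $n\ge 3$: the two \emph{admissible} tableaux $[\,n-1,\overline{n-1}\,]$ and $[\,n,\bar n\,]$ are sent by your correspondence to the same monomial $Y_{n-1}(N+1)/Y_{n-1}(N+2)$, while the monomial $X_1(N+1)X_{\bar 1}(N)=Y_1(N+1)/Y_1(N+n)$, which does belong to the set \eqref{g.f}, is not the image of any admissible tableau. Hence the map by which you propose to ``match the underlying sets'' is neither injective nor surjective, and an induction that mirrors the tensor-product rule factor by factor while ignoring these identifications cannot close: proving that \eqref{g.f} is stable under $\tilde e_i,\tilde f_i$ and forms a single connected component requires tracking exactly when a monomial admits two distinct ordered $X$-expressions — the collapses your proposal assumes away are the entire content of the statement.
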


Here note that  any monomial in $X$-variables
\begin{align}
\nonumber
M=&\prod\limits_{j=1}^L X_{b_{j}}
(  m_{j} )\prod\limits_{j=L+1}^k X_{\bar{c_{j}}}(m_{j})
\end{align}
can be written by using $Y$-variables as
\[ 
M\, = \prod\limits_{j=1}^L {Y_{a_{j}}( m_{j-1} )^{-1}Y_{b_{j}}( m_{j})} 
\prod\limits_{j=L+1}^k {Y_{c_{j}}( m_{j}+n-c_{j}+1 )^{-1}Y_{d_{j}}( m_{j}+n-d_{j} )}.
\]

For example, we have 
$Y_{k}( N)=X_{1}(  k+N-1  )X_{2}(k+N-2  )
\cdots X_{k}( N)$.
Now, it is straightforward to verify that we have 
another characterization  of the crystal $\mathcal{ M}(Y_{k}(1))$
in Y variables.
The crystal 
$\mathcal{ M}(Y_{k}(N))$ is obtained  by shifting the indices $m_j$ to $m_{j}+N-1$.    

The following result is immediate from the previous result.
\begin{proposition}\label{Y forms}
  For $k\in I$, $Y_{k}(1)$ is the monomial of weight  $\Lambda_{k}$ such that $\tilde{ e}_{i}(Y_{k}(1))=0$  for all $ i\in I$. Then the connected component $\mathcal{M}(Y_{k}(1))$ of $\mathcal{M}$  containing $Y_{k}(1)$ is characterized as 
    \begin{align}\label{exp.of Y}
                 \hspace{-5pt}\mathcal{M} ( Y_{k}(1) )=
                 \left\{ \left.\prod\limits_{j=1}^L {Y_{a_{j}}( m_{j-1} )^{-1}Y_{b_{j}}( m_{j})} \prod\limits_{j=L+1}^k {Y_{c_{j}}( m_{j}+n-c_{j}+1 )^{-1}Y_{d_{j}}( m_{j}+n-d_{j} )}\,
                 \right\vert\,(\bigstar)\right\},
                 \end{align}
where 
\[
(\bigstar)\left\{\begin{array}{l}
  0\le L\le k,\\
  0< b_{1}<b_{2}<\cdots <b_{L}\le n,\\ 
0<c_{k}<c_{k-1}<\cdots<c_{L+1}\le n,\\
 a_{j}+1=b_{j}, c_{j}=d_{j}+1,\, \, \, \forall j\le k,\\
m_{j}=k-j+1,\,\,\,0\le j\le k.
\end{array}\right  .
\]                  
\end{proposition}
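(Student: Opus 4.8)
The plan is to deduce the statement directly from Proposition \ref{general form by X} by specialising the shift parameter to $N=1$ and rewriting every $X$-variable through Definition \ref{XXX}. With $N=1$ the cited proposition reads
\[
\mathcal{M}(Y_k(1)) = \{X_{i_1}(k)\,X_{i_2}(k-1)\cdots X_{i_k}(1)\mid i_j\in\mathcal{I},\ 1\le i_1<\cdots<i_k\le\bar 1\},
\]
so the $j$-th factor carries the shift $m_j=k-j+1$, which is exactly the last line of $(\bigstar)$. First I would split the strictly increasing word $i_1<\cdots<i_k$ at the unique index $L$ separating its positive letters from its negative ones, writing $i_j=b_j$ for $j\le L$ and $i_j=\overline{c_j}$ for $j>L$; letting $L$ range over $0\le L\le k$ covers all words. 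The single order condition then decouples into the two block conditions of $(\bigstar)$: on the positive part it is literally $0<b_1<\cdots<b_L\le n$, while on the negative part, because $\bar n<\cdots<\bar 1$ in the order \eqref{order of i}, the chain $\overline{c_{L+1}}<\cdots<\overline{c_k}$ reverses to $0<c_k<c_{k-1}<\cdots<c_{L+1}\le n$.

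It then remains to substitute. For a positive letter, Definition \ref{XXX} gives $X_{b_j}(m_j)=Y_{b_j}(m_j)/Y_{b_j-1}(m_j+1)$; setting $a_j=b_j-1$ and using the identity $m_j+1=m_{j-1}$ (valid precisely because the shifts are consecutive) produces the factor $Y_{a_j}(m_{j-1})^{-1}Y_{b_j}(m_j)$. For a negative letter, $X_{\overline{c_j}}(m_j)=Y_{c_j-1}(m_j+n-c_j+1)/Y_{c_j}(m_j+n-c_j+1)$; setting $d_j=c_j-1$ and observing $m_j+n-c_j+1=m_j+n-d_j$ produces $Y_{c_j}(m_j+n-c_j+1)^{-1}Y_{d_j}(m_j+n-d_j)$. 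Collecting the two blocks gives precisely the Laurent monomial displayed in \eqref{exp.of Y} together with the relations $a_j+1=b_j$ and $c_j=d_j+1$. Since passing from a word to its $(\bigstar)$-data and back is a reversible reparametrisation, the set described by $(\bigstar)$ is literally the same set of monomials as in Proposition \ref{general form by X}, and the claimed equality follows at once.

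The computation is mechanical, so the only genuine care is the shift bookkeeping: the identity $m_{j-1}=m_j+1$ on the positive block and $m_j+n-c_j+1=m_j+n-d_j$ on the negative block must be tracked exactly, and at the extreme letters $b_j=1$ or $c_j=1$ one needs the boundary conventions $Y_0=Y_{n+1}=1$ (together with $X_{n+1}(m)=Y_n(m+1)^{-1}$, $X_{\overline{n+1}}(m)=Y_n(m)$) so that the formula specialises correctly. The one point I would flag explicitly, rather than an obstacle, is that the map from admissible words to Laurent monomials is surjective but \emph{not} injective: distinct words can collapse to the same monomial, so the cardinality of $\mathcal{M}(Y_k(1))$ is strictly smaller than the number of words satisfying $(\bigstar)$. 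Because the proposition asserts only the equality of the underlying sets, this coincidence causes no difficulty and needs no separate argument.
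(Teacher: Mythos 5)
Your proposal is correct and takes essentially the same approach as the paper: the paper also obtains Proposition \ref{Y forms} as an immediate consequence of Proposition \ref{general form by X} (with $N=1$), via the displayed conversion of $X$-variables into $Y$-variables that precedes the proposition, which is exactly your substitution with the same shift bookkeeping $m_{j-1}=m_j+1$ and $m_j+n-c_j+1=m_j+n-d_j$. Your closing remark that the parametrisation by words is not injective is likewise consistent with the paper's Remark following the proposition, and correctly noted as harmless for the asserted set equality.
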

\begin{remark}
Note that 
neither the expression using $X_{i}(m)$ variables in (\ref{g.f}), nor the expression in  (\ref{exp.of Y}) is unique for certain monomials since 
one has the following relations of $X$-variables (\cite{BCD}):
\begin{equation}
X_i(p)X_{\bar i}(p-n+i)=\frac{Y_i(p)}{Y_i(p+1)}=X_{i+1}(p)X_{\overline{i+1}}(p-n+i)\quad
(1\leq i\leq n-1).\label{equality}
\end{equation}

 For example,  applying \eqref{equality} by (\ref{exp.of Y}) in Proposition \ref{Y forms} and (\ref{g.f}) in Proposition \ref{general form by X} we find in type  $C_4$, 
\begin{enumerate}
\item  for $ \mathcal{M}(Y_{3}(1))$, we have
\[
X_{1}(3)X_{3}(2) X_{\bar{3}}(1)=Y_{3}(3)^{-1}Y_{3}(2)Y_{1}(3)=X_{1}(3)X_{4}(2) X_{\bar{4}}(1)\in \mathcal{M}(Y_{3}(1)).
\]
Indeed, for $(a_{1},b_{1},a_{2},b_{2},d_3,c_3)=(0,1,2,3,2,3)$ in (\ref{exp.of Y}) we get the monomial $Y_{3}(3)^{-1}Y_{3}(2)Y_{1}(3)$, and 
for $(a_{1},b_{1},a_{2},b_{2},d_3,c_3)=(0,1,3,4,3,4)$ in (\ref{exp.of Y}) also we get the monomial $Y_{3}(3)^{-1}Y_{3}(2)Y_{1}(3)$.
\item  for $ \mathcal{M}(Y_{4}(1))$, we have
$X_{1}(4)X_{2}(3) X_{3}(2)X_{\bar{1}}(1)=Y_{3}(2)Y_{1}(5)^{-1}=X_{2}(4)X_{3}(3) X_{4}(2)X_{\bar{4}}(1)$ and 
$X_{3}(4)X_{\bar{3}}(3) X_{\bar{2}}(2)X_{\bar{1}}(1)=Y_{3}(5)^{-1}Y_{3}(4)Y_{2}(5)^{-1}
=X_{4}(4)X_{\bar{4}}(3) X_{\bar{2}}(2)X_{\bar{1}}(1).$
\end{enumerate}
\end{remark}


\section{Product of Fundamental Crystals in 
Monomial Realization of type $C_n$}
                   
\subsection{The Product
$\mathcal{M}(Y_{p}(m) )\cdot \mathcal{M}(Y_{q}( 1 ))$ }

\begin{proposition}\label{crystal of product}
Let $M_{1}\in\mathcal{M}(Y_{p}( m ))$, 
$M_{2}\in\mathcal{M}(Y_{q}( 1) )$ with $m\geq 1$, and  $p,q \in\{1,\ldots ,n\}\,$.Then for any $i\in I$, we have
\[
\tilde{e}_{i}\, ( M_{1}\cdot{M}_{2} )\in  \mathcal{M}(Y_{p}( m ))
\cdot \mathcal{M}(Y_{q}( 1 ))\cup \, \{0\},
\]
\[
\tilde{f}_{i}\, ( M_{1}\cdot{M}_{2} )\in \mathcal{M}(Y_{p}( m )) 
\cdot \mathcal{M}(Y_{q}( 1 ))\cup \, \left\{ 0 \right\}.
\]
And then we know that the product $\mathcal{M}(Y_{p}( m ))\cdot \mathcal{M}(Y_{q}( 1) )$ holds the crystal structure.
           \end{proposition}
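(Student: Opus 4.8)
The plan is to exploit two facts simultaneously: on $\mathcal{M}$ the Kashiwara operators act by \emph{multiplication} by a monomial, $\tilde{e}_{i}M=A_{i}(n_{e})M$ and $\tilde{f}_{i}M=A_{i}(n_{f})^{-1}M$ (see \eqref{nf def}), while the product of Laurent monomials is itself multiplicative. Hence it suffices to prove the sharper statement that for each $i\in I$
\[
\tilde{e}_{i}(M_{1}\cdot M_{2})\in\{(\tilde{e}_{i}M_{1})\cdot M_{2},\ M_{1}\cdot(\tilde{e}_{i}M_{2})\}\cup\{0\},
\]
and likewise for $\tilde{f}_{i}$. Indeed, $\mathcal{M}(Y_{p}(m))$ and $\mathcal{M}(Y_{q}(1))$ are connected components of $\mathcal{M}$ (Corollary \ref{M crystal }), hence stable under $\tilde{e}_{i},\tilde{f}_{i}$; so whenever, say, $\tilde{e}_{i}(M_{1}\cdot M_{2})=(\tilde{e}_{i}M_{1})\cdot M_{2}\ne 0$, the factor $\tilde{e}_{i}M_{1}$ is a nonzero element of $\mathcal{M}(Y_{p}(m))$ and the product lies in $\mathcal{M}(Y_{p}(m))\cdot\mathcal{M}(Y_{q}(1))$. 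The whole proposition thus reduces to the assertion that applying a Kashiwara operator to the product agrees with applying it to exactly one of the two factors.

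To analyze this I would pass to the $i$-th partial-sum functions. Writing the exponent sequences of $M_{1},M_{2}$ at node $i$ as $y_{i}(\cdot)$ and $y_{i}'(\cdot)$, the exponent sequence of $M_{1}\cdot M_{2}$ is the termwise sum, so $S_{i}^{M_{1}M_{2}}(m)=S_{i}^{M_{1}}(m)+S_{i}^{M_{2}}(m)$, where $S_{i}^{M}(m)=\sum_{k\le m}y_{i}(k)$. By \eqref{varepsilon} and \eqref{nf def}, the quantities $\varphi_{i},\varepsilon_{i},n_{e},n_{f}$ are read off these paths: $\varphi_{i}$ is the height of the path, and $n_{e}$ (resp.\ $n_{f}$) is the rightmost (resp.\ leftmost) time at which the height is attained. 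The displayed claim is then equivalent to the purely combinatorial statement that the rightmost maximizer of $S_{i}^{M_{1}}+S_{i}^{M_{2}}$ (and, separately, the leftmost one) coincides with the corresponding maximizer of one of the two summands.

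The structural input is Proposition \ref{Y forms}: every $M_{2}\in\mathcal{M}(Y_{q}(1))$ is a product of blocks $Y_{a_{j}}(m_{j-1})^{-1}Y_{b_{j}}(m_{j})$ and $Y_{c_{j}}(\cdots)^{-1}Y_{d_{j}}(\cdots)$ with $a_{j}+1=b_{j}$, $c_{j}=d_{j}+1$, $m_{j}=q-j+1$; thus for fixed $i$ the path $S_{i}^{M_{2}}$ has unit increments, with the positive blocks living in times $\{1,\dots,q+1\}$ and the barred blocks in a band shifted up by at most $n$. For $M_{1}\in\mathcal{M}(Y_{p}(m))$ the same description holds with every time index raised by $m-1$ (the index shift relating $\mathcal{M}(Y_{p}(m))$ to $\mathcal{M}(Y_{p}(1))$, described after Proposition \ref{general form by X}), so $S_{i}^{M_{1}}$ is supported in times $\ge m$. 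When $m$ is large enough that the two $i$-supports are disjoint and correctly ordered, the combined path is literally the concatenation of $S_{i}^{M_{2}}$ followed by $S_{i}^{M_{1}}$ shifted vertically by $\langle h_{i},wt(M_{2})\rangle$; the bracketing underlying $\tilde{e}_{i},\tilde{f}_{i}$ is then exactly that of a tensor product of the two factors (in the order dictated by the supports), and the ``acts on one factor'' property is precisely the tensor product rule.

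The main obstacle is the overlapping regime (small $m$, in particular $m=1$), where $S_{i}^{M_{1}}$ and $S_{i}^{M_{2}}$ genuinely interact and, for arbitrary paths, the extreme maximizer of a sum need not sit on either summand. Here I would argue directly from the block structure: for fixed $i$ only consecutive values $b_{j},a_{j}$ and their barred analogues contribute, so each factor path is essentially a single unit-step ``hump,'' and I would verify by a short case analysis---organized by the position of $i$ relative to $p$ and $q$, and treating the long-root node $i=n$ (where $A_{n}(m)$ carries the exponent $Y_{n-1}(m+1)^{-2}$) together with its neighbor $n-1$ separately---that the summed path creates no new interior maximizer, so its extreme maximizer is always carried by the factor that is higher across the overlap. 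Proving this benign-overlap property is the technical heart; once it is established, the multiplicative reduction of the first paragraph immediately yields that $A_{i}(n_{e})$ (resp.\ $A_{i}(n_{f})^{-1}$) is the Kashiwara operator acting on a single factor, and hence that $\mathcal{M}(Y_{p}(m))\cdot\mathcal{M}(Y_{q}(1))$ is a subcrystal of $\mathcal{M}$.
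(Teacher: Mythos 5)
Your opening reduction is where the argument breaks: the ``sharper statement'' that $\tilde{e}_{i}(M_{1}\cdot M_{2})$ always lies in $\{(\tilde{e}_{i}M_{1})\cdot M_{2},\,M_{1}\cdot(\tilde{e}_{i}M_{2}),\,0\}$ is false, and with it the ``benign-overlap'' property whose proof you defer to a case analysis. Counterexample in type $C_{3}$ with $p=2$, $q=1$, $m=1$: take
\[
M_{1}=X_{2}(2)X_{\bar{1}}(1)=\frac{Y_{2}(2)}{Y_{1}(3)Y_{1}(4)}\in\mathcal{M}(Y_{2}(1)),
\qquad
M_{2}=X_{\bar{2}}(1)=\frac{Y_{1}(3)}{Y_{2}(3)}\in\mathcal{M}(Y_{1}(1)).
\]
Then $M_{1}\cdot M_{2}=Y_{2}(2)Y_{1}(4)^{-1}Y_{2}(3)^{-1}$: its only negative $Y_{1}$-exponent sits at position $4$ (the one at position $3$ has been cancelled by the $Y_{1}(3)$ in $M_{2}$), so $n_{e}=3$ and $\tilde{e}_{1}(M_{1}\cdot M_{2})=A_{1}(3)\,M_{1}M_{2}=Y_{1}(3)Y_{2}(2)Y_{2}(3)^{-2}\neq 0$. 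On the other hand $\tilde{e}_{1}M_{1}=A_{1}(2)M_{1}=Y_{1}(2)Y_{1}(4)^{-1}$, so $(\tilde{e}_{1}M_{1})\cdot M_{2}=Y_{1}(2)Y_{1}(3)Y_{1}(4)^{-1}Y_{2}(3)^{-1}$ differs from $\tilde{e}_{1}(M_{1}\cdot M_{2})$, while $\tilde{e}_{1}M_{2}=0$ because $\varepsilon_{1}(M_{2})=0$. Thus the Kashiwara operator of the product acts on neither factor through that factor's own Kashiwara operator: the summed path acquires an extreme maximizer that is not the relevant maximizer of either summand. This is precisely the configuration treated in case (2) of the paper's proof, where $Y_{i}(\cdot)^{-1}$ occurs twice in one factor (here via $X_{2}(2)$ and $X_{\bar{1}}(1)$ inside $M_{1}$).

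The proposition nevertheless holds, for a reason your argument never touches and which is the actual heart of the paper's proof: by the explicit descriptions in Propositions \ref{general form by X} and \ref{Y forms}, \emph{both} candidate modifications of $M_{1}$ --- multiplication by $A_{i}$ at either of its two negative $Y_{i}$-positions --- stay inside $\mathcal{M}(Y_{p}(m))$, not only the one equal to $\tilde{e}_{i}M_{1}$ (the paper explicitly flags this as a ``remarkable fact''). In the example, $A_{1}(3)M_{1}=Y_{2}(2)Y_{2}(3)^{-1}=X_{2}(2)X_{\bar{2}}(1)\in\mathcal{M}(Y_{2}(1))$, so $\tilde{e}_{1}(M_{1}\cdot M_{2})=(A_{1}(3)M_{1})\cdot M_{2}$ does lie in $\mathcal{M}(Y_{2}(1))\cdot\mathcal{M}(Y_{1}(1))$ even though $A_{1}(3)M_{1}\neq\tilde{e}_{1}M_{1}$. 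So the statement you should be proving is the weaker closure property: the position selected for the product is always a negative $Y_{i}$-position contributed by one of the factors, and \emph{every} such one-step modification of that factor remains in its connected component. Without this replacement, your reduction --- and the case analysis built on top of it --- cannot be completed.
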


\begin{proof}
By the explicit description in Proposition \ref{general form by X} , we can write 
  \begin{align}
                \nonumber
                     M_{1}&=X_{i_{1}}( p+m-1 )X_{i_{2}}( p+m-2 
                     )\cdots X_{i_{p}}(m),\\
              \nonumber
                    M_{2}&=X_{j_{1}}( q )X_{j_{2}}( q-1 )\cdots
                    X_{j_{q}}( 1 ).
            \end{align}
First, let us consider the action of $\tilde{e}_{i}$.  The factors $Y_{i}(l)^{-1}(l\in\mathbb{Z})$ can appear at most twice
in the factors $X_{i+1}$, $X_{\bar{i}}$ of $M_{1}$ and $M_{2}$ respectively since if a  
monomial $M$ has no $Y_i(l)^{-1}$ then $\eit M=0$.
Let us see the following two cases:
\begin{enumerate}
\item  If $Y_{i}(l)^{-1}$ appear at most  once in each $M_1$ and $M_2$,  we obtain 
     \begin{center}
                  $\tilde{e}_{i}\, ( M_{1}\cdot{M}_{2} )=
                       ( \tilde{e}_{i}M_{1} )\cdot M_{2}$ or $ M_{1}\cdot ( \tilde{e}_{i}M_{2} )$ or $0$ .
                \end{center}
                  Indeed,  if both  $Y_{i}( l_{1} )^{-1}$ appears in $M_{1}$ and $Y_{i}( l_{2} )^{-1}$ appears in  $M_{2}$, by the definition  of $ \tilde{e}_{i}$ we have
             \begin{align}
                  \nonumber
                    \tilde{e}_{i}\, ( M_{1}\cdot{M}_{2} )& =A_{i}( l_{1} ) 
                     ( M_{1}\cdot{M}_{2} )\,\,\,\mathrm{or}\,\,\,A_{i}( l_{2} )  ( M_{1}\cdot{M}_{2} )\,\,\,\mathrm{or}\,\,\, 0\\
                  \nonumber
                    &= ( \tilde{e}_{i}M_{1} )\cdot M_{2}
                     \,\,\,\mathrm{or}\,\,\,  M_{1}\cdot ( \tilde{e}_{i}M_{2} )\,\,\,\mathrm{or}\,\,\, 0,
                 \end{align}
where $(\tilde{e}_{i}M_{1} )\in \mathcal{M}(Y_{p}( m ))
\cup\{0\}$
 and $(\tilde{e}_{i}M_{2} )\in\mathcal{M}(Y_{q}(1) )
 \cup\{0\}$ 
 by  Proposition \ref{Y forms}, 
 then $  \tilde{e}_{i}\, ( M_{1}\cdot{M}_{2} )
 \in  \mathcal{M}(Y_{p}( m ))  
 \cdot \mathcal{M}(Y_{q}( 1 ))\cup \, \{0\}.$
If one of them appears in $M_{1}$ or $M_{2}$,  we also have 
$\tilde{e}_{i}\, ( M_{1}\cdot{M}_{2} )=
( \tilde{e}_{i}M_{1} )\cdot M_{2}$
or $  M_{1}\cdot ( \tilde{e}_{i}M_{2} )\,\,\mathrm{or}\,\, 0$ .
                
Then we obtain $\tilde{e}_{i}\, ( M_{1}\cdot{M}_{2} )\in  \mathcal{M}(Y_{p}( m ))  \cdot \mathcal{M}(Y_{q}( 1 ))\cup \, \{0\}.$
\item If $Y_{i}(l)^{-1}$ appears twice in $M_1$, then we can write $M_1$ as
\begin{align}
\nonumber
M_{1}=X_{i_1}( p+m-1 )X_{i_2}( p+m-2 )\cdots X_{k}(l_{1}+1)X_{i+1}(l_1 ) \cdots X_{\bar{j}}(l_{2}+1) X_{\bar{i}}(l_2)\cdots X_{i_{p}}(m),
  \end{align}
where $k<i$ and $i+1<j$. Now we can obtain
\[
\tilde{e}_{i}M_1=\left\{\begin{array}{lcl}  A_{i}( l_{1} )\cdot M_1 & \mbox{if} &  l_1\geq l_{2}+n-i+1,\\
                 A_{i}( l_{2}+n-i+1 )\cdot M_1& \mbox{if} & l_1<l_{2}+n-i+1.
                   \end{array}\right. 
\]
If $l_1\geq l_{2}+n-i+1$, we have $X_{i+1}\to X_i$, that is, 
\begin{eqnarray*}
 &&M'_1:=A_{i}( l_{1} )\cdot M_1\\
 &&=X_{i_1}( p+m-1 )X_{i_2}( p+m-2 )\cdots X_{k}(l_{1}+1)X_{i}(l_1 ) \cdots X_{\bar{j}}(l_{2}+1) 
X_{\bar{i}}(l_2)\cdots\in \mathcal{M}(Y_{p}( m )).
 \end{eqnarray*}
If $ l_1<l_{2}+n-i+1$, we get
\begin{eqnarray*}
&&M''_1:= A_{i}( l_{2}+n-i+1 )\cdot M_1\\
&&=X_{i_1}( p+m-1 )X_{i_2}( p+m-2 )\cdots X_{k}(l_{1}+1)X_{i+1}(l_1 ) \cdots X_{\bar{j}}(l_{2}+1)
X_{\overline{i+1}}(l_2)\cdots \in \mathcal{M}(Y_{p}( m )).
  \end{eqnarray*}
It is a remarkable fact  that from \eqref{Y forms} even if 
$l_1<l_{2}+n-i+1$ ( resp. $l_1\geq l_{2}+n-i+1$), we have 
$M'_1\in \mathcal{M}(Y_{p}( m ))$
(resp. $M''_1\in \mathcal{M}(Y_{p}( m ))$),
which means both $M_{1}^\prime, M_{1}''$ are 
in $\mathcal{M}(Y_{p}( m ))$
simultaneously.

The case $Y_{i}(l)^{-1}$ appears twice in $M_2$ as $Y_i(\alpha)^{-1}$ and $Y_i(\beta)^{-1}$ for some $\alpha, \beta\in \mathbb{Z}$ can be done  similarly to the case of $M_1$. If we write 
\[
\tilde{e}_{i}M_2=\left\{\begin{array}{rcl}  A_{i}(\alpha )\cdot M_2:= M_{2}^\prime & \mbox{if} & \alpha\geq \beta,\\
                 A_{i}(\beta )\cdot M_2:= M_{2}''& \mbox{if} & \alpha< \beta.
                   \end{array}\right. 
\]

By Proposition \ref{Y forms},  we obtain $ \tilde{e}_{i}M_2\in \mathcal{M}(Y_{q}( 1 ))$. It is remarkable fact that even if  $\alpha< \beta$ ( resp. $\alpha\geq  \beta$), we have $ A_{i}( \alpha )\cdot M_2\in\mathcal{M}(Y_{q}( 1))$$(\mbox{resp.}\,\,\, A_{i}(\beta  ))\cdot M_2\in\mathcal{M}(Y_{q}( 1)))$,
which means both $M_{2}^\prime, M_{2}''\in \mathcal{M}(Y_{q}( 1))$.  Then we have
\begin{align}
                \nonumber
\tilde{e}_{i}(M_1\cdot M_2 )=&M_{1}^\prime\cdot M_2\quad\mbox{or}\quad M_{1}''\cdot M_2\quad\mbox{or}\quad M_1\cdot M_2^\prime\\
\nonumber
&\mbox{or}\quad M_1\cdot M_{2}''\quad\mbox{or}\quad 0\in \mathcal{M}(Y_{p}( m ))\cdot\mathcal{M}(Y_{q}( 1 ))\cup \{0\},
 \end{align}
which mean 
 $\tilde{e}_{i}(M_1\cdot M_2 )\in  \mathcal{M}(Y_{p}( m ))\cdot\mathcal{M}(Y_{q}( 1))\cup\{0\}$.
\end{enumerate}
The case of $ \tilde{f}_{i}\, ( M_{1}\cdot{M}_{2} )$ is shown 
similarly.
 \end{proof}

\begin{theorem} \label{pos.crystal}
             The product $ \mathcal{M}(Y_{p}(  m) )\cdot \mathcal{M}(Y_{q}( 1 ))$
             possesses a crystal structure and it is decomposed into a direct sum of simple crystals, that is, 
            there exist dominant integral weights $\lambda_{1},\lambda_{2},\mathellipsis,\lambda_{k}\in
P_{+}$ \,such that 
\[
\mathcal{M}(Y_{p}(m) ) 
\cdot\mathcal{M}(Y_{q}( 1 ))\cong 
B( \lambda_{1} )\oplus B( \lambda_{2} ) \oplus\cdots
\oplus  B ( \lambda_{k} ).
\]
\end{theorem}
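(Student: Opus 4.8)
The plan is to recognize the product set as a finite subcrystal of the ambient monomial crystal $\mathcal{M}_c$ of type $C_n$ and then invoke the structure theorem for $\mathcal{M}_c$. First I would dispose of the first assertion: that $\mathcal{M}(Y_p(m))\cdot\mathcal{M}(Y_q(1))$ carries a crystal structure is exactly the content of Proposition \ref{crystal of product}. Since each product $M_1\cdot M_2$ of Laurent monomials is again a Laurent monomial, the product set is literally a \emph{subset} of $\mathcal{M}_c$, and the weight, $\varepsilon_i$ and $\varphi_i$ are the ones inherited from $\mathcal{M}_c$ through the formulas in \eqref{varepsilon}; the restriction of all these structure maps to any operator-stable subset automatically satisfies the crystal axioms.

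For the decomposition I would proceed as follows. Because $\mathcal{M}(Y_p(m))\cong B(\Lambda_p)$ and $\mathcal{M}(Y_q(1))\cong B(\Lambda_q)$ are finite sets (finite-dimensional representations of the finite-type algebra $C_n$), the product set is finite. By Proposition \ref{crystal of product} it is stable under every $\tilde{e}_i$ and $\tilde{f}_i$, so it is a disjoint union of connected components of $\mathcal{M}_c$. Since $C_n$ is semisimple, Corollary \ref{M crystal }(3) guarantees that each connected component of $\mathcal{M}_c$ is isomorphic to $B(\lambda)$ for some $\lambda\in P_+$. Finiteness then forces only finitely many components to occur, whence
\[
\mathcal{M}(Y_p(m))\cdot\mathcal{M}(Y_q(1))\cong B(\lambda_1)\oplus\cdots\oplus B(\lambda_k)
\]
for suitable $\lambda_1,\ldots,\lambda_k\in P_+$, where $\oplus$ denotes the disjoint union of crystals.

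The main point to keep in mind is that this existence statement is \emph{soft} once closure under the Kashiwara operators is available from Proposition \ref{crystal of product}; the genuine difficulty lies not here but in pinning down the weights $\lambda_i$ explicitly, which amounts to locating the highest-weight monomials of the product (those $M$ with $\varepsilon_i(M)=0$ for all $i$) and computing their weights—precisely the work deferred to Theorem \ref{smy.deco.}. The one subtlety I would flag is that the product set can be \emph{strictly smaller} than the tensor product $\mathcal{M}(Y_p(m))\otimes\mathcal{M}(Y_q(1))$, since distinct pairs of monomials may share the same product; hence one cannot simply transport the tensor-product decomposition of Lemma \ref{decomposition}, and the decomposition must instead be read off intrinsically from the subcrystal structure inside $\mathcal{M}_c$.
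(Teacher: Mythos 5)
Your proposal is correct and takes essentially the same route as the paper: the crystal structure is Proposition \ref{crystal of product}, and the decomposition follows because the product, being closed under all $\tilde{e}_i,\tilde{f}_i$, is a subcrystal (a union of connected components) of $\mathcal{M}_{c}$, each component of which is isomorphic to some $B(\lambda)$, $\lambda\in P_{+}$, by Corollary \ref{M crystal }(3). Your additional observations (explicit finiteness of the product set to get finitely many $\lambda_i$, and the caveat that the product can be strictly smaller than the tensor product) are accurate refinements of the paper's shorter argument, not a different method.
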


 \begin{proof}
The former half of the statement is almost trivial by Proposition \ref{crystal of product}. 
Indeed, we know that $\mathcal{M}(Y_{p}(m) ) 
\cdot\mathcal{M}(Y_{q}( 1 ))$ is a crystal if it is closed by the actions $\eit$ and $\fit$ $(i\in I)$.
For the latter half, since we suppose $\mathfrak{g}$ is of type $C_n$, 
by Corollary \ref{M crystal } we find that $\mathcal{M}_{c}$ is 
a direct sum of the form $B(\lambda)$'s $(\lambda\in P_{+})$ 
and then  $ \mathcal{M}(Y_{p} (m) ) 
\cdot\mathcal{M}(Y_{q}( 1 ))$ is also a subcrystal of $ \mathcal{M}_c$ and then it is 
a direct sum of $B(\lambda)$'s.
\end{proof}
By considering similarly to the proof of Proposition \ref{crystal of product},
 we obtain 
\begin{corollary} \label{general result}
For any $m_1,m_2, \mathellipsis m_l\in  \mathbb{ Z}$,  $p_1,p_2, \mathellipsis, p_l
\in I\, (l>0)$, the product of crystals 
\begin{center}
$ \mathcal{M}(Y_{p_1}( m_1) ) \cdot  \mathcal{M}(Y_{p_2}(m_2) ) 
\cdots \mathcal{M}(Y_{p_l}( m_l) )$ ,
\end{center}
possesses  a crystal structure and it is decomposed into direct sum of crystal $B(\lambda)$'s $(\lambda\in P_{+})$.
\end{corollary}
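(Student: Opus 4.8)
The plan is to imitate the proof of Proposition \ref{crystal of product} for an arbitrary number of factors and then to conclude by the normality argument already used in Theorem \ref{pos.crystal}. Write a generic element of the product as $M=M_1\cdot M_2\cdots M_l$ with $M_j\in\cM(Y_{p_j}(m_j))$ for each $j$. Since a product of Laurent monomials is again a Laurent monomial, the whole product set sits inside $\cM_c$. Consequently it suffices to prove that this set is closed under every Kashiwara operator $\eit,\fit$ $(i\in I)$: any subset of $\cM_c$ stable under the operators is automatically a union of connected components of $\cM_c$, and, $\ge$ being of type $C_n$ and hence semisimple, each such component is isomorphic to $B(\lm)$ for some $\lm\in P_{+}$ by Corollary \ref{M crystal } (3) (see also Theorem \ref{normality}). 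Thus the asserted direct-sum decomposition follows immediately from closure.

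To prove closure, fix $i\in I$ and examine $\eit M$ (the case of $\fit$ being symmetric). By Proposition \ref{general form by X} each factor $M_j$ is a product of $X$-variables with strictly increasing indices, so within a single factor the variable $Y_i(\cdot)^{-1}$ can be produced only by the two letters $X_{i+1}$ and $X_{\bar i}$, and hence occurs at most twice in $M_j$. The operator $\eit$ acts on $M$ by the single multiplication $M\mapsto A_i(n_e)M$, where $n_e$ is the distinguished index of \eqref{nf def} computed from the exponent function $y_i(\cdot)$ of the \emph{full} product $M$. The crucial claim, exactly as in Proposition \ref{crystal of product}, is that this one multiplication coincides with applying $\eit$ to a single factor $M_{j_0}$ and leaving the others fixed; since $\eit M_{j_0}\in\cM(Y_{p_{j_0}}(m_{j_0}))\cup\{0\}$ by Proposition \ref{Y forms}, we then obtain $\eit M=M_1\cdots(\eit M_{j_0})\cdots M_l\in\cM(Y_{p_1}(m_1))\cdots\cM(Y_{p_l}(m_l))\cup\{0\}$, as required. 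An induction on $l$ does not obviously help, since the partial product $M_1\cdots M_{l-1}$ is a general Laurent monomial for which the ``at most twice'' bound fails; it is cleaner to treat all $l$ factors simultaneously.

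The main obstacle is precisely this localization when $Y_i(\cdot)^{-1}$ occurs in several factors at once. In the two-factor proof the only delicate situation is when some $Y_i(l)^{-1}$ appears twice, and there one records the ``remarkable fact'' that the two a priori different outcomes $M_1'$ and $M_1''$ (coming from the local moves $X_{i+1}\to X_i$ and $X_{\bar i}\to X_{\overline{i+1}}$) both lie in the same fundamental crystal, so the ambiguity is harmless. For $l$ factors I would argue in the same spirit: because each factor $M_j$ individually obeys the characterization $(\bigstar)$ of Proposition \ref{Y forms}, both local moves preserve membership in $\cM(Y_{p_j}(m_j))$, irrespective of which factor the index $n_e$ selects, while the relation \eqref{equality} among $X$-variables is what makes the two candidate monomials genuinely equal as Laurent monomials. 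The combinatorial heart is therefore to verify that the distinguished position $n_e$, read off from the superposed exponent function $y_i=\sum_j y_i^{(j)}$ of the whole product, always points to the $X_{i+1}$- or $X_{\bar i}$-slot of a single factor; this amounts to comparing the partial sums $\sum_{k\le m}y_i(k)$ across the concatenated factors, and is where the bulk of the case analysis lies.

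Once closure is secured, no further work is needed for the decomposition: the product is then a subcrystal of the normal crystal $\cM_c$, so it is a direct sum of the components $B(\lm)$ $(\lm\in P_{+})$ guaranteed by Corollary \ref{M crystal } (3), exactly as in the proof of Theorem \ref{pos.crystal}.
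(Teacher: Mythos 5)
Your proposal is correct and follows essentially the same route as the paper: the paper's own proof is just the remark that one argues as in Proposition \ref{crystal of product} (closure of the product under all $\tilde{e}_i,\tilde{f}_i$, using the explicit $X$-variable form of each factor and the fact that both local moves $X_{i+1}\to X_i$, $X_{\bar i}\to X_{\overline{i+1}}$ stay inside the relevant fundamental crystal), followed by the normality/semisimplicity argument of Theorem \ref{pos.crystal} to get the direct sum of $B(\lambda)$'s. Your write-up, including the observation that induction on the number of factors is not the right mechanism and that all factors must be treated simultaneously, is if anything more detailed than the paper's one-line proof.
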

\subsection{Reductions}
In the sequel, we will treat monomials in 
$\mathcal{M}(C_{n-i})$ for $i=0,1,\cdots,n-1$, 
where $U_{q}(C_{n-i})$ is the subalgebra of 
$U_{q}(C_{n})$ generated by 
$\{e_j,f_j,k^{\pm1}_j|j\in\{i+1,i+2,\cdots,n\}\}$.
Note that the set of monomial ${\mathcal M}(C_{n-i})$ is generated by 
the variables $\{Y_j(m)^{\pm1}\,|\,j\in\{i+1,\cdots,n\},m\in{\mathbb Z}\}$.
Let us introduce a morphism of multiplicative group ${\mathcal M}(C_n)$, which is called a 
``$i$-reduction of monomial",  by
\begin{align}\label{iota def}
\iota_{i}(Y_{k}(m)^{\pm1})\coloneqq
\left\{\begin{array}{lcl} Y_{k}(m)^{\pm1}&  \mbox{if} & i\ne k,\\
       1 & \mbox{if} &i=k.
\end{array}\right. 
\end{align} 
Note that $\iota_{i}\iota_{i-1}\cdots\iota_{1}:
\mathcal{M}( C_n)\longrightarrow \mathcal{M}( C_{n-i})$. 

The following lemma is a generalization of Proposition \ref{general form by X}.
\begin{lemma}\label{mkm}
For $k\in[1,2n]$, set
\begin{equation}
M_k(m):=
\left\{X_{i_{1}}( k+m-1 )X_{i_{2}}( k+m-2 )
\cdots X_{i_{k}}(m)\vert i_j\in \mathcal{ I}, 
1\le i_{1} <i_{2}<\cdots < i_{k} \le \bar{1}\}\right.
\label{k12n}
\end{equation}
Then we get
\begin{equation}
M_k(m)\cong \begin{cases}
\mathcal{M}(Y_k(m))&{\rm for }\,\,k\in[1,n]=I,\\
\mathcal{M}(Y_{2n-k}(m-n+k))&{\rm for }\,\,k\in[n+1,2n].
\end{cases}
\end{equation}
\end{lemma}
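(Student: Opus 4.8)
The plan is to prove a set-level equality of Laurent monomials and then read off the crystal isomorphism for free. Since both $M_k(m)$ and the fundamental crystals $\mathcal{M}(Y_j(N))$ are honest subsets of the ambient monomial crystal $\mathcal{M}_{c}$, and $\mathcal{M}_{c}$ is a normal crystal that decomposes as a direct sum of the $B(\lambda)$'s with each $\mathcal{M}(Y_j(N))$ a connected component (Corollary~\ref{M crystal } together with Proposition~\ref{Y forms}), it suffices to show that $M_k(m)$ coincides, \emph{as a set of monomials}, with the connected component claimed; the Kashiwara operators and the isomorphism are then inherited verbatim and the asserted iso is literally the identity map. The case $k\in[1,n]$ is nothing but Proposition~\ref{general form by X}, which already asserts $M_k(m)=\mathcal{M}(Y_k(m))$, so the whole content lies in the range $k\in[n+1,2n]$.

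For $k\in[n+1,2n]$ put $j:=2n-k\in[0,n-1]$ and, for a $k$-subset $S=\{i_{1}<\cdots<i_{k}\}\subseteq\mathcal{I}$, write $\mu_{S}(m):=\prod_{r=1}^{k}X_{i_{r}}(k+m-r)$, so that $M_k(m)=\{\mu_{S}(m)\}$. The barring involution $\overline{(\cdot)}$ on $\mathcal{I}$ reverses the order \eqref{order of i}, hence $S\mapsto\overline{S^{c}}$ is an involutive bijection from $k$-subsets onto $(2n-k)$-subsets. The key assertion I would prove is the pointwise monomial identity
\[
\mu_{S}(m)=\mu_{\overline{S^{c}}}(m-n+k)\qquad(|S|=k),
\]
where the right-hand monomial is the one attached to the $(2n-k)$-subset $\overline{S^{c}}$ in the parametrization of $M_{2n-k}(m-n+k)$. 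Granting this, the bijection identifies $M_k(m)$ with $M_{2n-k}(m-n+k)$; and since $2n-k\le n$, the latter equals $\mathcal{M}(Y_{2n-k}(m-n+k))$ by Proposition~\ref{general form by X} (the degenerate case $k=2n$, $j=0$ reduces to the single identity monomial, matching the trivial crystal $B(\Lambda_{0})=B(0)$). This yields the lemma.

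To establish the monomial identity I would translate everything into the $Y$-variables using Definition~\ref{XXX}. The base case is the full product: when $S=\mathcal{I}$ (so $k=2n$) the factors telescope to $\mu_{\mathcal{I}}(m)=1$, exactly as in the sample computation $Y_{k}(N)=X_{1}(k+N-1)\cdots X_{k}(N)$ carried to completion; this is the mechanism behind all the cancellations. For general $S$ one writes $\mu_{S}(m)$ as an explicit ratio of $Y$'s—a positive index $b$ contributing $Y_{b}/Y_{b-1}$ and a negative index $\overline{c}$ contributing $Y_{c-1}/Y_{c}$ at the shifted times dictated by its position in $S$—and compares it with the analogous expression for $\overline{S^{c}}$ at base time $m-n+k$. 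The reconciliation is governed by relation \eqref{equality}, namely $X_i(p)X_{\bar i}(p-n+i)=X_{i+1}(p)X_{\overline{i+1}}(p-n+i)$, which is precisely the move that compensates for the discrepancy between an element's global position in $\mathcal{I}$ and its local position in $S$.

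The main obstacle is exactly this time-index bookkeeping. The time carried by $X_{i_{r}}$ in $\mu_{S}(m)$ is governed by the local position $r$ of $i_{r}$ within $S$, whereas in the telescoping full product the same letter sits at its global position in $\mathcal{I}$; the two differ by the number of elements of $S^{c}$ lying below $i_{r}$, and it is \eqref{equality} (applied repeatedly, moving $(i,\bar i)$-pairs to $(i+1,\overline{i+1})$-pairs) that absorbs this shift and lets the deleted complementary letters recombine into the barred-complement monomial at the shifted base $m-n+k$. I expect that organizing these moves by downward induction on $k$—deleting one extreme letter at a time and invoking the $k\le n$ case as the inductive anchor—keeps the computation finite and transparent; as a cross-check, one verifies that the distinguished highest monomial of $M_k(m)$ computes to $Y_{2n-k}(m-n+k)$, which pins down the correct connected component independently.
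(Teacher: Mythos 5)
Your proposal is correct, but it takes a genuinely different route from the paper. The paper's argument is crystal-theoretic: it first observes that $M_k(m)$ is closed under the Kashiwara operators (by the proof of Proposition \ref{crystal of product}), so by Corollary \ref{M crystal } it is a direct sum of $B(\lambda)$'s, and then proves by induction on the rank $n$ --- applying the $1$-reduction $\iota_1$ to a hypothetical highest weight monomial $M\in M_k(m)$, invoking the rank-$(n-1)$ statement for $\iota_1(M)$, and lifting back to the four possible forms of $M$ --- that the unique highest weight monomial in $M_k(m)$ is $Y_{2n-k}(m-n+k)$; connectedness and the isomorphism follow. You instead prove the sharper, purely combinatorial statement that $M_k(m)$ and $M_{2n-k}(m-n+k)$ are \emph{equal as sets} of Laurent monomials, via the complementation bijection $S\mapsto\overline{S^{c}}$ and the pointwise identity $\mu_{S}(m)=\mu_{\overline{S^{c}}}(m-n+k)$, after which the $k\le n$ case of Proposition \ref{general form by X} finishes the proof. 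This buys several things: the isomorphism becomes literally the identity map, no normality or closure argument is needed, and the identity makes transparent the redundancy of the parametrization noted after Proposition \ref{Y forms} (relation \eqref{equality}). Your key identity is indeed true and your induction does close. Concretely: with $x^{*}=\max(S^{c})$ (equivalently, $\overline{x^{*}}=\min \overline{S^{c}}$), every element of $\mathcal{I}$ above $x^{*}$ lies in $S$ and forms the tail of $S$, whose product telescopes in the $Y$-variables to a single ratio; one then checks, separately for $x^{*}$ positive and negative, that
\begin{equation*}
\mu_{S}(m)=X_{\overline{x^{*}}}(n+m-1)\cdot \mu_{S\cup\{x^{*}\}}(m-1),
\end{equation*}
which reduces the identity for $|S|=k$ to the identity for $|S\cup\{x^{*}\}|=k+1$ at parameter $m-1$, anchored at the full set $S=\mathcal{I}$ where $\mu_{\mathcal{I}}(m)=1$.

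One point you must fix: the phrase ``invoking the $k\le n$ case as the inductive anchor'' is wrong for a downward induction on $k$. The only available anchor is $k=2n$, i.e.\ the telescoping $\mu_{\mathcal{I}}(m)=1$, which you also (correctly) identify as the base case. The $k\le n$ instances of your pointwise identity are \emph{not} supplied by Proposition \ref{general form by X} --- that proposition gives the set equality $M_k(m)=\mathcal{M}(Y_k(m))$, not the complementation identity --- and under the involution $S\mapsto\overline{S^{c}}$ they are equivalent to the $k\ge n$ instances you are trying to prove, so they cannot serve as an independent base case.
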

{\sl Proof.} The case $k\in I$ is obtained by Proposition \ref{general form by X}. 
Then, we assume $k\in[n+1,2n]$.  
It is almost trivial that 
the set $M_k(m)$ is closed by the actions of $\eit$ and $\fit$ for any $i\in I$
by the proof of Proposition \ref{crystal of product}. Thus, we shall show that 
a unique highest weight vector in $M_k(m)$ is $Y_{2n-k}(m-n+k)$ by using 
induction on the rank $n$.
Let $M\in M_k(m)$ be a highest weight vector.
Then it is evident that $1$-reduction $\io_1(M)$ is a highest weight vector as a monomial in 
${\cM}(C_{n-1})$, which means the following by the induction hypothesis:
\begin{equation}
\io_1(M)=\wtil X_2(l'+k'-1)\wtil X_3(l'+k'-2)\cd\wtil X_n(l'+k'-n+1)\wtil X_{\ovl n}(l'+k'-n)
\cd \wtil X_{\ovl{2n-k'}}(l')
\end{equation}  
where $l',k'\in\bbZ$ and 
\begin{equation}
\widetilde{X_{i}}(l)=\iota_{1}(X_{i}(l))
=\left\{\begin{array}{rcl}& 1&\mbox{if}\,\,\,\,  i=1\mbox{or}\bar{1},\\
& Y_{2}(l)&\mbox{if} \,\,\,\,  i=2,\\
&  \frac{1} {Y_{2}(l+n-1)}& \mbox{if} \,\,\,\,  i=\bar{2},\\
& \frac{Y_{i}(l)}{Y_{i-1}(l+1)} &\mbox{if} \,\,\,\, i\in\{3,\cdots,n\},\\
& \frac{Y_{i-1}(l+n-i+1)}{Y_{i}(l+n-i+1)} 
&\mbox{if} \,\,\,\,  \bar{i}\in\{\bar{n},\cdots,\bar{3}\}.
\end{array}\right. \label{X-til}
\end{equation}
Then, we have that $M$ is in the one of the following forms:
\begin{eqnarray}
&&X_1(l'+k')X_2(l'+k'-1)\cd X_n(l'+k'-n+1) X_{\ovl n}(l'+k'-n)
\cd X_{\ovl{2n-k'}}(l')X_{\ovl 1}(l'-1),\q\label{m1-1}\\
&&X_1(l'+k')X_2(l'+k'-1)\cd X_n(l'+k'-n+1) X_{\ovl n}(l'+k'-n)
\cd X_{\ovl{2n-k'}}(l'),\q\label{m1}\\
&&X_2(l'+k'-1)X_3(l'+k'-2)\cd X_n(l'+k'-n+1) X_{\ovl n}(l'+k'-n)
\cd X_{\ovl{2n-k'}}(l')X_{\ovl 1}(l'-1),\qq\label{m2-1}\\
&&X_2(l'+k'-1)X_3(l'+k'-2)\cd X_n(l'+k'-n+1) X_{\ovl n}(l'+k'-n)
\cd X_{\ovl{2n-k'}}(l').\q\label{m2}
\end{eqnarray}
Indeed, in the cases \eqref{m2-1} and \eqref{m2}, their weights are not dominant and then
these cases cannot occur.
For \eqref{m1-1}, if $2n-k'=2$, then $M=1$ and it is a trivial highest weight monomial.
Otherwise, $\wt(M)$ is not dominant and then it cannot happen.
For \eqref{m1}, $M$ is a highest weight monomial and it can be written explicitly as
$Y_{2n-k'-1}(l'-n+k'+1)$ where $l'=m$ and $k'=k-1$. Thus, we obtain the uniqueness of the 
highest weight vector in $M_k(m)$ and the isomorphism $M_k(m)\cong {\cM}(Y_{2n-k}(m-n+k))$. 
\qed

\begin{definition}
 \begin{enumerate}
\item Take $p\in \{1,\ldots ,2n\}$ and $M_{1}\in M_p(m)$. 
By the general form of $M_1$ given in 
\eqref{k12n}, we set
\begin{align}
\nonumber
M_{1}&=X_{i_{1}}( p+m-1 )X_{i_{2}}( p+m-2 )\cdots
   X_{i_{p}}(m)\,\,(1\le i_{1}<\cdots <i_{p}\le \bar{1}).
\end{align}
  Assume that for some $d\in \{0,1,\mathellipsis,p\}\,$ there exists 
$j\in\{1,2,\cdots,\bar{2},\bar{1}\}$ satisfying $i_d<j<i_{d+1}$ (we set $i_{0}=0$, $i_{p+1}=\bar 0$), where we say this situation that {\it  there is a gap between $i_d$ and $i_{d+1}$ in the sequence  $(i_{1},\ldots, i_{p})$.}
\item We denote a {\itshape{consecutive sequence}} by the following forms:
\begin{align}
\nonumber
[X_{i}(l) \cdots X_{j}(l+i-j)]=&\prod\limits_{k=i}^{j}X_{k}(l+i-k)
\qquad (i<j),\\
\nonumber
[X_{\bar{j}}(l) \cdots X_{\bar{i}}(l+j-i)]=
&\prod\limits_{k={i}}^{j}X_{\bar{k}}(l+j-k)\qquad (i<j),\\
\nonumber
[X_{i}(l) \cdots X_{\bar{j}}(l+i+j-2n-1)]=
&\prod\limits_{k=i}^{n}X_{k}(l+i-k)\prod\limits_{k=j}^{n}X_{\bar{k}}(l+i+k-2n-1)\qquad(i,j<n).
 \end{align} 
\item
For two monomials $L_1=X_{i_1}(m_1)\cd X_{i_k}(m_k)$ and $L_2=X_{j_1}(n_1)\cd X_{j_l}(n_l)$, if 
$\{(i_1,m_1),\cd,(i_k,m_k)\}\subset \{(j_1,n_1),\cd,(j_l,n_l)\}$, it is denoted 
$L_1\subset L_2$.
\end{enumerate}
 \end{definition}

In the sequel, we use frequently shortened notations $X_{i}, Y_{i}$ for $X_{i}(l), Y_{i}(l)$ for some $l$, if we do not  need the exact form of $l$.

\begin{definition}
For  a monomial $M=X_{i_1}(m+p-1)\cdots X_{i_p}(m)\in M_p(m)$ satisfying 
$1\le i_1<\cdots<i_p\le\overline1$, 
and $m\in \mathbb{ Z}$, we say that the {\it length} of $M$ is $p$, 
which is denoted by $l(M)$. 
And define  $h(M):=m+(p-n)_+$, which is called the  {\it height}
of $M$. Here note that the height of a monomial $M\in M_p(m)$ is the index of its 
unique highest weight monomial $X_1(m+p-1)\cd X_p(m)=Y_p(m)$ with $p\leq n$ or 
$X_1(m+p-1)\cd X_{\ovl{2n-p+1}}(m)=Y_{2n-p}(m-n+p)$ with $p>n$.
 \end{definition}
\begin{remark}
For any monomial $M=X_{j_{i+1}}(m+p-i-1)
\cdots X_{j_p}(m)\in M_p(m)_{C_{n-i}}$ 
with $i\in\{0,1,\cd,n-1\}$, $i+1\le j_{i+1}<\cdots<j_p\le\overline{i+1}$, 
$p\in\{i+1,\cdots,2n\}$ and $m\in \mathbb{ Z}$, we have $l(M)=p$
and $h(M)=m+(p-n)_+$. For example, in $i=2$-case, for $p\in\{n+1,\ldots,2n\}$
the highest weight monomial in $M_p(m)$ is $X_3(m+p-3)\cd X_n(m+p-n-2)X_{\ovl n}(p+m-n-3)\cd
X_{\ovl{2n-p+1}}(m)=Y_{2n-p}(m-n+p)$, whose height is $m-n+p$.
\end{remark}

\begin{lemma} \label{M has gaps}
For $m\geq1$ and $p,q\in \{1,\ldots ,2n-1\}\,$,  assume
\begin{equation}
m+(p-n)_+\geq 1+(q-n)_+.
\label{mpnq}
\end{equation}
If a monomial $M_1\in M_p(m)$ is in one of the following forms (1), (2) and (3), 
then a product $M_{1}\cdot{M}_{2}$ is not a highest weight vector 
for any $M_2\in M_q(1)$:
\begin{enumerate}
\item
\begin{eqnarray}
\hspace{-56pt}&&M_1=\begin{cases} [X_{2}(p+m-1)\cdots X_{p}(m+1)]X_{\bar{1}}(m)
=\frac{Y_{p}(m+1)}{Y_{1}(p+m)}\cdot \frac{1}{Y_1(m+n)}&\hbox{if }p\leq n,\\
[X_{2}(p+m-1)\cdots X_{\ovl{2n-p+1}}(m+1)]X_{\bar{1}}(m)
=\frac{Y_{2n-p}(m+p-n+1)}{Y_{1}(p+m)}\cdot \frac{1}{Y_1(m+n)}&\hbox{if }p>n.
\end{cases}\label{M1-1}
\end{eqnarray}
\item \begin{eqnarray}
&&\hspace{-100pt}M_1=\begin{cases} [X_{2}(p+m-1)\cdots X_{p+1}(m)]
=\frac{Y_{p+1}(m)}{Y_{1}(p+m)}&\hbox{if }p< n,\\
[X_{2}(p+m-1)\cdots X_{\ovl{2n-p}}(m)]
=\frac{Y_{2n-p-1}(m+p-n+1)}{Y_{1}(p+m)}&\hbox{if }p\geq n.
\end{cases}\label{M1-2}
\end{eqnarray}
\item
\begin{eqnarray}
\hspace{-40pt}&&M_1=\begin{cases} [X_{1}(p+m-1)\cdots X_{p-1}(m+1)]X_{\bar{1}}(m)
=\frac{Y_{p-1}(m+1)}{Y_{1}(m+n)}&\hbox{if }p\leq n,\\
[X_{1}(p+m-1)\cdots X_{\ovl{2n-p+2}}(m+1)]X_{\bar{1}}(m)
=\frac{Y_{2n-p+1}(m+p-n)}{Y_{1}(m+n)}&\hbox{if }p>n,
\end{cases}\label{M1-3}
\end{eqnarray}
\end{enumerate}
\end{lemma}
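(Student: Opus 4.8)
The plan is to prove the contrapositive-style claim directly: assuming $M_1\cdot M_2$ were a highest weight vector I derive a contradiction, where by the Remark following Corollary \ref{M crystal } a monomial $N$ is highest weight exactly when $\varepsilon_i(N)=0$ for every $i\in I$. By the formula for $\varepsilon_i$ in \eqref{varepsilon} it suffices, for each form, to exhibit a single index $i$ and a threshold $t$ with $\sum_{k>t}y_i(k)<0$, where $y_i(k)$ is the total exponent of $Y_i(k)$; since exponents add, $y_i^{M_1\cdot M_2}(k)=y_i^{M_1}(k)+y_i^{M_2}(k)$. The driving observation is a reach bound on $M_2\in M_q(1)$: among the $X$-variables of Definition \ref{XXX} only $X_1,X_2,X_{\bar1},X_{\bar2}$ carry a $Y_1$, so by the ordering $1\le i_1<\cdots<i_q\le\bar1$ of Proposition \ref{general form by X} the largest time at which $M_2$ produces a \emph{positive} $Y_1$ is $\max(q,n+1)$ (namely $q$, from $X_1$ at time $q$, and $n+1$, from $X_{\bar2}$ at time $\le 2$).

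First I would dispose of the cases in which the top negative $Y_1$-factor of $M_1$ sits strictly above everything $M_2$ can reach. For form \eqref{M1-1} with $p>n$ the top factor is $Y_1(p+m)^{-1}$, and \eqref{mpnq}, which reads $m+p-n\ge 1+(q-n)_+$, gives $p+m\ge q+1$ and $p+m\ge n+2$, hence $p+m>\max(q,n+1)$; no positive $Y_1$ of $M_2$ reaches time $p+m$, the topmost $Y_1$-exponent of $M_1\cdot M_2$ is $\le-1$, and $\varepsilon_1>0$. For form \eqref{M1-1} with $p\le n$, and for form \eqref{M1-3} (whose unique negative $Y_1$-factor is always $Y_1(m+n)^{-1}$), the top negative sits at time $m+n$, and \eqref{mpnq} becomes $m\ge 1+(q-n)_+$, which yields $m+n\ge\max(q+1,n+1)$. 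Whenever this is strict the same ``top negative dominates'' argument forces $\varepsilon_1>0$. Form \eqref{M1-2} with $p\ge n$ behaves identically, its negative factor $Y_1(p+m)^{-1}$ lying at time $p+m\ge n+1$.

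The hard part will be the boundary cases in which $M_2$'s positive $Y_1$ exactly ties the top negative of $M_1$, and the genuinely different form \eqref{M1-2} with $p<n$, where the negative $Y_1(p+m)^{-1}$ need not exceed $n+1$ and $X_1(q)$ (when $q\ge p+m$) or $X_{\bar2}$ can cancel it at index $1$. The mechanism I would use is that killing $\varepsilon_1$ \emph{forces} a specific $X$-variable into $M_2$, which then produces an uncompensated negative at the \emph{next} index: explicitly $X_{\bar2}(l)=Y_1(l+n-1)/Y_2(l+n-1)$ carries $Y_2(l+n-1)^{-1}$ at the same high time, so that if $\varepsilon_1=0$ is achieved by inserting $X_{\bar2}$ at the top, a top-level $Y_2^{-1}$ appears and one checks $\varepsilon_2>0$ instead. (In the $C_2$, $p=1$, $q=2$, $m=1$ instance, the five monomials $M_2\in\mathcal M(Y_2(1))$ fail either at $\varepsilon_1$ or, precisely when $X_{\bar2}\in M_2$, at $\varepsilon_2$, which is the mechanism in miniature.) When the tie cancellation cascades, each step forces the next factor $X_{\bar{k+1}}$ via the monotonicity $\bar{k+1}<\bar k$ and decreasing times, and the fixed length $l(M_2)=q$ together with the height bound \eqref{mpnq} eventually leaves a negative $Y_i$-exponent at the top with no room for compensation.

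The main obstacle, then, is purely the bookkeeping of these ties: away from the boundary a one-line comparison of the single top $Y_1$-exponent against the reach $\max(q,n+1)$ of $M_2$ settles everything, but on the boundary one must track net exponents index by index and invoke both the strict monotonicity of the index sequence of $M_2$ and the length constraint $l(M_2)=q$ to rule out a simultaneous cancellation at every index. I expect the write-up to spend almost all its effort there, organizing the tie cases for the three forms and verifying that in each the obstruction, if removed at index $1$, reappears at index $2$ (or propagates upward until it cannot be removed), thereby always producing some $\varepsilon_i(M_1\cdot M_2)>0$ and contradicting that $M_1\cdot M_2$ is a highest weight vector.
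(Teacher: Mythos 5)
Your treatment of the easy cases is essentially sound and is in the same spirit as the paper's own argument for forms \eqref{M1-1} and \eqref{M1-3}: the reach bound $\max(q,n+1)$ for positive $Y_1$-factors of $M_2$ (coming only from $X_1(q)$ and from $X_{\bar{2}}(l)$ with $l\le 2$), combined with \eqref{mpnq}, does force $\varepsilon_1(M_1\cdot M_2)\geq 1$ whenever the top negative $Y_1$-factor of $M_1$ sits strictly above that reach; and in the boundary ties one can add the observation (missing from your sketch, but easy) that $X_{\bar{2}}(2)$ can only occur immediately followed by $X_{\bar{1}}(1)=Y_1(n+1)^{-1}$, which cancels the would-be compensating $Y_1(n+1)$.

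The genuine gap is form \eqref{M1-2} with $p<n$ in the regime $q\geq p+m$ or $n\geq p+m$, where the single negative factor $Y_1(p+m)^{-1}$ really can be compensated at index $1$; here your proposal is a heuristic, not a proof. First, the specific claim that killing $\varepsilon_1$ by inserting $X_{\bar{2}}$ forces $\varepsilon_2>0$ is false as stated: the factor $Y_2^{-1}$ carried by $X_{\bar{2}}$ can itself be compensated by $X_2$ (at time $q$ or $q-1$) or by $X_{\bar{3}}$ at a larger time, and $X_2$ in turn reintroduces a $Y_1^{-1}$, so the chain of compensations bounces between indices rather than propagating monotonically ``upward.'' Second, you give no termination argument: the assertion that the length constraint $l(M_2)=q$ and the height bound ``eventually leave a negative exponent at the top'' is precisely what must be proved, and it is not even the shape of the actual obstruction. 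The paper handles this case by first proving a sub-lemma that if $M_1\cdot M_2$ is highest weight with $M_1=Y_{p+1}(m)/Y_1(p+m)$, then $\varepsilon_i(M_2)=0$ for all $i\neq p+1$; this pins $M_2$ down to four explicit shapes, and in each shape the contradiction is an arithmetic incompatibility among inequalities extracted from $\varepsilon_1=0$ and $\varepsilon_{p+1}=0$ and a length identity --- for instance $n\geq p+m$ and $m\geq q-l+1$ against $q=l+(2n-p-2)$ yield $n\leq 1$ --- rather than an uncompensated top exponent. Without this classification of $M_2$ and the case-by-case inequality work (and the analogous analysis for $p\geq n$), case (2) of the lemma remains unproven in your write-up, not merely deferred.
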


\begin{proof}
 Assume that $M_1\cdot M_2$ is a highest weigh vector. \\
(1) 
If $1<p<2n-1$ and $\wt(M_1\cdot M_2)$ is dominant, there are $X_1$ and $X_{\ovl 2}$ but no $X_2$ nor 
$X_{\ovl 1}$ in $M_2$ since  $\wt(M_2)$ should have 
the weight $2\epsilon_1=2\Lm_1$ as a summand by the explicit form of $M_1$.
Thus, under the condition $p>1$, $M_2$ has the factor $Y_1(q)$ and $Y_1(n)$. 
Then we have 
\[
\vep_1(M_1\cdot M_2)=\vep_1\left(\frac{Y_1(q)Y_1(n)}{Y_1(m+n)Y_1(p+m)}\right)\geq 1,
\]
since $m+n>q, n$. This derives a contradiction to that $M_1\cdot M_2$ is a highest weight vector.

If $p=1$, $M_1=\frac{1}{Y_1(m+n)}$ and if $p=2n-1$, then $M_1=\frac{1}{Y_1(m+p)}$ and then 
${\rm wt}(M_1)=-\Lm_1=-\epsilon_1$. 
In this case, if the factor $Y_1$ is in $M_2$, then it is in $X_1(q)=Y_1(q)$ or 
$Y_{\ovl 2}(1)=\frac{X_1(n)}{X_2(n)}$ and then by $q,n<n+m, (2n-1)+m$ we have
\[
\vep_1(M_1\cdot M_2)=
\vep_1\left(\frac{Y_1(q)}{Y_1(x+m)}\right)=1 \hbox{ or }
\vep_1\left(\frac{Y_1(n)}{Y_1(x+m)}\right)=1 \hbox{ or }
\vep_1\left(\frac{Y_1(q)Y_1(n)}{Y_1(x+m)}\right)=1, \q
(x=n\hbox{ or } 2n-1),
\]
which means that $M_1\cdot M_2$ cannot be a highest weight monomial.

(2) 
For a monomial  $M_2=X_{j_1}(q)\cdots X_{j_q}(1)\in M_q(1)$, assume that 
$M_1\cdot M_2$ is a highest weight vector. 
It follows from the explicit form in \eqref{M1-2} that $M_1$ has a factor $Y_1(p+m)^{-1}$. 
Since wt$(M_1\cdot M_2)$ must be a dominant weight, one finds that 
$M_2$ holds $Y_1(q)$ or $Y_1(n)$, which are from the term $X_1(q)$ or $X_{\bar 2}(1)$.
Thus, considering similarly to the case (1), 
if $q<p+m$ and $n<p+m$, then $\vep_1(M_1\cdot M_2)\geq1>0$, which implies
that $M_1\cdot M_2$ cannot be a highest weight vector.

Next, suppose that $q\geq p+m$ or $n\geq p+m$.
Let us show the following lemma.
\begin{lemma}
\begin{enumerate}
\item If $M_1=\frac{Y_{p+1}(m)}{Y_{1}(p+m)}$ in the case $p<n$ and $M_1\cdot M_2$ is a highest weight monomial,  then $\eit M_2=0$ for $i\in I\setminus\{p+1\}$, 
\item If $M_1=\frac{Y_{2n-p-1}(m+p-n+1)}{Y_{1}(p+m)}$ in the case $p\geq n$ and $M_1\cdot M_2$ is a highest weight monomial,  then $\eit M_2=0$ for $i\in I\setminus\{2n-p-1\}$. 
\end{enumerate}
\end{lemma}
\begin{proof} The case (ii) is shown similarly to the case (i), so we shall see only (i).
The case $i\ne1$ is trivial. For $i=1$, by the definition of $\vep_1$ in \eqref{varepsilon}, we have
\[
0=\vep_1(M_1\cdot M_2)=\vep_1(\frac{M_2}{Y_1(p+m)})\geq \vep_1(M_2)\geq0,
\]
which means $\vep_1(M_2)=0$.
\end{proof}
By this lemma, we find that if $M_1\cdot M_2$ is a highest weight vector 
and $M_1$ is in the form \eqref{M1-2}, then $M_2$ is in the following form:
\[
M_2=\begin{cases}[X_1(q)\cd X_l(q-l+1)][X_{p+2}(q-l)\cd]&\hbox{ if }p<n,\q{\rm (A)}\\
[X_1(q)\cd X_l(q-l+1)][X_{\ovl{p+1}}(q-l)\cd]&\hbox{ if }p<n,\q{\rm (B)}\\
[X_1(q)\cd X_l(q-l+1)][X_{2n-p}(q-l)\cd]&\hbox{ if }p\geq n,\q{\rm (C)}\\
[X_1(q)\cd X_l(q-l+1)][X_{\ovl{2n-p-1}}(q-l)\cd]&\hbox{ if }p\geq n,\ \q{\rm (D)}
\end{cases}
\]
where in (B) and (D), if $l>n$, we understand that $X_l$ is $X_{\ovl{2n-l+1}}$.
Here note that $[X_{p+2}\cd]$ and $[X_{\ovl{p+1}}\cd]$ (resp. $[X_{2n-p}\cd]$ and 
$[X_{\ovl{2n-p-1}}\cd]$) cannot occur in $M_2$ simultaneously since their 
denominators are both both $Y_{p+1}$ (resp. $Y_{2n-p-1}$) and the weight of $M_1\cdot M_2$ must be dominant.

 Assume $l>1$. Let us see (A) and (B) for $M_2$.  Since $\wt(M_1\cdot M_2)$ is dominant, 
 we find that $M_2$ should have the factor $Y_1$, then the end part of $M_2$ would be $X_{\ovl 2}(1)$. Thus, $M_2$ can be written in the following forms:
 \[
 M_2=\begin{cases}\frac{Y_l(q-l+1)Y_1(n)}{Y_{p+1}(q-l+1)},&{\rm (A)},\\
\frac{Y_l(q-l+1)Y_1(n)}{Y_{p+1}(n)},&{\rm (B)}\q{l\leq n},\\
\frac{Y_{\ovl{2n-l+1}}(q-l+1)Y_1(n)}{Y_{p+1}(n)},&{\rm (B)}\q{l> n}.
\end{cases}
\]
In case (A), the length of $M_2=[X_1\cd X_l][X_{p+2}\cd X_{\ovl 2}]$ is $q=l+(2n-p-2)$.
By $0=\vep_1(M_1\cdot M_2)=\vep_1(\frac{Y_1(n)}{Y_1(p+m)})$, we get $n\geq p+m$. 
And we also get $m\geq q-l+1$ from 
$0=\vep_{p+1}(M_1\cdot M_2)=\vep_{p+1}(\frac{Y_{p+1}(m)}{Y_{p+1}(q-l+1)})$. These induce inequality
\[
n+l-1\geq p+q=2n+l-2,
\]
which implies $1\geq n$ and then derives contradiction. Then, this case (A) never occur.

Next, let us see (B) with $1<l<n$. In this case, we get 
the inequalities $n\geq p+m$ from $0=\vep_1(M_1\cdot M_2)=\vep_1(\frac{Y_1(n)}{Y_1(p+m)})$ and 
$m\geq n$ from $0=\vep_1(M_1\cdot M_2)=\vep_{p+1}(\frac{Y_{p+1}(m)}{Y_{p+1}(n)})$. These induce
$m\geq m+p$, which contradicts $q\geq1$. Thus, this case (B) with $1<l<n$ does not occur.

Let us see (B) with $l>n$. Considering similarly to the previous case, we obtain 
the inequality $n\geq p+m$ by $\vep_1(M_1)\cdot M_2=0$ and $m\geq n$ by $\vep_{p+1}(M_1\cdot M_2)=0$,
which derives $m\geq p+m$ and then a contradiction. Thus, this case cannot  occur.
Indeed, in the cases (C) and (D) it will be shown similarly that $M_1\cdot M_2$ never be 
a highest weight monomial. 

Now, let us assume $l=1$. We have for some $\xi\in\bbZ$ and $x\in I$, 
\[
M_2=\begin{cases}Y_1(q)\cdot \frac{Y_x(\xi)}{Y_{p+1}(q)},&p<n(\Leftrightarrow \hbox{ (A) or (B)})\,
\\
Y_1(q)\cdot \frac{Y_x(\xi)}{Y_{2n-p-1}(q)},&p\geq n(\Leftrightarrow\hbox{\rm (C) or (D)}),
\end{cases}\qq
M_1=\begin{cases}
\frac{Y_{p+1}(m)}{Y_1(p+m)}&p<n,\\
\frac{Y_{2n-p-1}(m+p-n+1)}{Y_1(p+m)}&p\geq n.
\end{cases}
\]
Assume $p<n$ and $x\ne1,\,p+1$. Then, we have 
the inequalities $q\geq p+m$ and $m\geq q$ by the conditions 
$\vep_1(M_1\cdot M_2)=\vep_{p+1}(M_1\cdot M_2)=0$, which derive a contradiction $m\geq p+m$. Thus, 
the cases (A),(B) with $x\ne1,\,p+1$ cannot occur. 
Assume $x=1$. In this case, we find that $\xi=n$ since $Y_{\ovl 2}(1)=\frac{Y_1(n)}{Y_2(n)}$.
Then by the conditions $\vep_1(M_1\cdot M_2)=\vep_{p+1}(M_1\cdot M_2)=0$, we get 
($q\geq p+m$, or $n\geq p+m$) and $m\geq q$. We also find that length$(M_2)=q=2n-p-1$ and then
$q\geq n$ since $p<n$. Thus, we obtain $m\geq q\geq n\geq p+m$, which is a contradiction. Thus, 
(A), (B) with $x=1$ never occur. Let us see the case $x=p+1$. In the case (A),  we have 
$M_2=Y_1(q)\frac{Y_{p+1}(n-p)}{Y_{p+1}(q)}$ and then we have the inequality $q\geq p+m$ and 
($m\geq q$ or $q-n\geq q$), which derive a contradiction $m\geq p+m$.
In the case (B), we have 
$M_2=Y_1(1)$ with $q=1$. Thus, from the condition $\vep_1(M_1\cdot M_2)=0$, we have  
$q=1\geq p+m$, which derives a contradiction.  Thus, the cases (A),(B) never happen. By arguing similar 
to the cases (A), (B), we know that the cases (C),(D) never happen.

\medskip
\noindent
(3) By considering similarly to the case (1), 
we find that
in almost all cases $M_1\cdot M_2$ cannot be a highest weight monomial, except the case $p=2$.
Let us see the case $p=2$. In this case, $M_1=\frac{Y_1(m+1)}{Y_1(m+n)}$ and then we get 
\[
\vep_1(M_1\cdot M_2)=
\vep_1\left(\frac{Y_1(m+1)Y_1(q)}{Y_1(n+m)}\right)=1 \hbox{ or }
\vep_1\left(\frac{Y_1(m+1)Y_1(n)}{Y_1(n+m)}\right)=1 \hbox{ or }
\vep_1\left(\frac{Y_1(m+1)Y_1(q)Y_1(n)}{Y_1(n+m)}\right)=1, 
\]
since $n+m>q,\,m+1,\,n$, which imply that $M_1\cdot M_2$ cannot be a highest weight monomial. 
\end{proof}

\subsection{Examples}
\noindent\indent
Here we list the decomposition of the monomial product and the highest weight monomials 
for type $C_2$, which are not only examples but also play a role of the first step of induction in the proof of Theorem \ref{induction on n} below.
\begin{example}\label{example C2}
Let us see the case of type $C_2$. Then $I=\{1,2\}$.
We see the monomial product $M_p(m)\cdot M_q(1)$ for $p,q\in\{1,2,3\}$.
Note that $M_4(m)=\{1\}$ is isomorphic to $B(0)$ and then,  there is nothing to do.
\begin{enumerate}
\item For $p=1$ and $q=1$, we have $m+(p-n)_+=m$ and $1+(q-n)_+=1$. Then we get 
\begin{enumerate}
\item $m\geq3$: $M_1(m)\cdot M_1(1)=\mathcal{M}(Y_{1}( m ))\cdot 
      \mathcal{M}(Y_{1}( 1) )
      \cong B(2\Lambda _1)\oplus B(\Lambda _2)\oplus B(0)$ and  
      the highest weight vector of $B(2\Lambda _1)$ is $ Y_1(m)\cdot Y_1(1)$, 
       of $ B(\Lambda _2)$ is $ Y_1(m)\cdot Y_2(1)\cdot Y_1(2)^{-1}$ and 
       of $B(0)$ is $ Y_1(m)\cdot Y_1(3)^{-1}$.
\item $m=2$: $M_1(2)\cdot M_1(1)=\mathcal{M}(Y_{1}( 2 ))\cdot 
      \mathcal{M}(Y_{1}( 1) )\cong 
      B(2\Lambda _1)\oplus B(\Lambda _2)$,  the highest weight vector of  
      $B(2\Lambda _1)$ is $ Y_1(2)\cdot Y_1(1)$ and of $ B(\Lambda _2)$ is 
      $ Y_1(2)\cdot Y_2(1)\cdot Y_1(2)^{-1}$.
\item $m=1$: $\mathcal{M}(Y_{1}( 1 ))\cdot \mathcal{M}(Y_{1}( 1) )\cong 
      B(2\Lambda _1)$,  the highest weight vector of $B(2\Lambda _1)$ is $ Y_1(1)\cdot Y_1(1)$.
\end{enumerate}

\item For $p=1$ and $q=2$, we have $m+(p-n)_+=m$ and $1+(q-n)_+=1$. Then we get
\begin{enumerate}
\item  $m\geq3$: $M_1(m)\cdot M_2(1)=\mathcal{M}(Y_{1}(m ))\cdot \mathcal{M}
       (Y_{2}(1) )\cong B(\Lambda _2+\Lambda _1)\oplus B(\Lambda _1)$ and 
      the highest weight vector of $B(\Lambda _2+\Lambda _1)$ is  $Y_1(m)\cdot Y_2(1)$ 
      and of $B(\Lambda _1)$ is $ Y_1(m)\cdot Y_1(3)^{-1}\cdot Y_1(2)$.
\item  $m=2$: $M_1(2)\cdot M_2(1)=\mathcal{M}(Y_{1}(2 ))\cdot 
       \mathcal{M}(Y_{2}( 1) )
      \cong B(\Lambda _2+\Lambda _1)$, the highest weight vector of $B(\Lambda _2+\Lambda _1)$ 
      is  $Y_1(2)\cdot Y_2(1)$
\item $m=1$: $M_1(1)\cdot M_2(1)=\mathcal{M}(Y_{1}( 1 ))\cdot 
      \mathcal{M}(Y_{2}( 1) )\cong 
       B(\Lambda _2+\Lambda _1)$,   the highest weight vector of 
       $B(\Lambda _2+\Lambda _1)$ is  $Y_1(1)\cdot Y_2(1)$.
\end{enumerate}
\item For $p=2$ and $q=1$, we also have $m+(p-n)_+=m$ and $1+(q-n)_+=1$. Then we get
\begin{enumerate}
\item $m\geq2$: $M_2(m)\cdot M_1(1)=\mathcal{M}(Y_{2}( m ))\cdot  
      \mathcal{M}(Y_{1}( 1) )\cong B(\Lambda _2+\Lambda _1)\oplus B(\Lambda _1)$ 
      and  the highest weight vector of $B(\Lambda _2+\Lambda _1)$ is $ Y_2(m)\cdot Y_1(1)$ 
      and of $B(\Lambda _1)$ is $ Y_2(m)\cdot Y_2(2)^{-1}\cdot Y_1(1)$.
\item $m=1$: $M_2(1)\cdot M_1(1)=\mathcal{M}(Y_{2}(1 ))\cdot \mathcal{M}(Y_{1}( 1) )\cong B(\Lambda _2+\Lambda _1)$,  the highest weight vector of $B(\Lambda _2+\Lambda _1)$ is $ Y_2(1)\cdot Y_1(1)$.
\end{enumerate}

\item For $p=2$ and $q=2$, we also have $m+(p-n)_+=m$ and $1+(q-n)_+=1$. Then we get
\begin{enumerate}
\item $m\geq 3$:$\mathcal{M}(Y_{2}( m ))\cdot \mathcal{M}(Y_{2}( 1) )\cong B(2\Lambda _2)\oplus B(2\Lambda _1)\oplus B(0)$ and the highest weight vector of $B(2\Lambda _2)$ is $ Y_2(m)\cdot Y_2(1)$, of $ B(2\Lambda _1)$ is $ Y_2(m)\cdot Y_2(2)^{-1}\cdot Y_1(2)^{2}$ and of $B(0)$ is $ Y_2(m)\cdot Y_2(3)^{-1}$.
\item $m=2$: $\mathcal{M}(Y_{2}( 2 ))\cdot \mathcal{M}(Y_{2}( 1) )\cong B(2\Lambda _2)\oplus B(2\Lambda _1)$,  the highest weight vector of $B(2\Lambda _2)$ is $ Y_2(2)\cdot Y_2(1)$ and of $ B(2\Lambda _1)$ is $ Y_2(2)\cdot Y_2(2)^{-1}\cdot Y_1(2)^{2}$.
\item $m=1$: $\mathcal{M}(Y_{2}( 1 ))\cdot \mathcal{M}(Y_{2}( 1) )\cong B(2\Lambda _2)$, the highest weight vector of $B(2\Lambda _2)$ is $ Y_2(1)\cdot Y_2(1)$.
\end{enumerate} 

\item For $p=1$ and $q=3$, we have $m+(p-n)_+=m$ and $1+(q-n)_+=1+1=2$. 
By Lemma \ref{mkm}, $M_3(1)={\mathcal M}(Y_1(2))=M_1(2)$.  Then, by the result of (1), we get 
\begin{enumerate}
\item $m\geq4$: $M_1(m)\cdot M_3(1)=\mathcal{M}(Y_{1}( m ))\cdot 
      \mathcal{M}(Y_{1}(2) )
      \cong B(2\Lambda _1)\oplus B(\Lambda _2)\oplus B(0)$ and  
      the highest weight vector of $B(2\Lambda _1)$ is $ Y_1(m)\cdot Y_1(2)$, 
       of $ B(\Lambda _2)$ is $ Y_1(m)\cdot Y_2(2)\cdot Y_1(3)^{-1}$ and 
       of $B(0)$ is $ Y_1(m)\cdot Y_1(4)^{-1}$.
\item $m=3$: $M_1(3)\cdot M_3(1)=\mathcal{M}(Y_{1}( 2 ))\cdot 
      \mathcal{M}(Y_{1}(2) )\cong 
      B(2\Lambda _1)\oplus B(\Lambda _2)$,  the highest weight vector of  
      $B(2\Lambda _1)$ is $ Y_1(3)\cdot Y_1(2)$ and of $ B(\Lambda _2)$ is 
      $ Y_1(3)\cdot (Y_2(2)\cdot Y_1(3)^{-1})$.
\item $m=2$: $M_1(2)\cdot M_3(1)=\mathcal{M}(Y_{1}( 2 ))\cdot 
       \mathcal{M}(Y_{1}(2) )\cong 
      B(2\Lambda _1)$,  the highest weight vector of $B(2\Lambda _1)$ is $ Y_1(2)\cdot Y_1(2)$.
\end{enumerate}

\item For $p=2$ and $q=3$, we have $m+(p-n)_+=m$ and $1+(q-n)_+=1+1=2$. 
By Lemma \ref{mkm}, $M_3(1)={\mathcal M}(Y_1(2))=M_1(2)$.  Then, by the result of (3), we get 
\begin{enumerate}
\item $m\geq3$: $M_2(m)\cdot M_3(1)=\mathcal{M}(Y_{2}( m ))\cdot  
      \mathcal{M}(Y_{1}(2) )\cong B(\Lambda _2+\Lambda _1)\oplus B(\Lambda _1)$ 
      and  the highest weight vector of $B(\Lambda _2+\Lambda _1)$ is $ Y_2(m)\cdot Y_1(1)$ 
      and of $B(\Lambda _1)$ is $ Y_2(m)\cdot Y_2(3)^{-1}\cdot Y_1(2)$.
\item $m=2$: $M_2(2)\cdot M_3(1)=\mathcal{M}(Y_{2}(2))\cdot \mathcal{M}(Y_{1}(2) )\cong B(\Lambda _2+\Lambda _1)$,  the highest weight vector of $B(\Lambda _2+\Lambda _1)$ is $ Y_2(2)\cdot Y_1(2)$.
\end{enumerate} 

\item For $p=3$ and $q=1$, we have $m+(p-n)_+=m+1$ and $1+(q-n)_+=1$. 
By Lemma \ref{mkm}, $M_3(m)={\mathcal M}(Y_1(m+1))=M_1(m+1)$.  Then, by the result of (1), we get 
\begin{enumerate}
\item $m\geq2$: $M_3(m)\cdot M_1(1)=\mathcal{M}(Y_{1}( m+1 ))\cdot 
      \mathcal{M}(Y_{1}( 1) )
      \cong B(2\Lambda_1)\oplus B(\Lambda_2)\oplus B(0)$ and  
      the highest weight vector of $B(2\Lambda _1)$ is $ Y_1(m+1)\cdot Y_1(1)$, 
       of $ B(\Lambda_2)$ is $ Y_1(m+1)\cdot Y_2(1)\cdot Y_1(2)^{-1}$ and 
       of $B(0)$ is $ Y_1(m+1)\cdot Y_1(3)^{-1}$.
\item $m=1$: $M_3(1)\cdot M_1(1)=\mathcal{M}(Y_{1}( 2 ))\cdot 
      \mathcal{M}(Y_{1}( 1) )\cong 
      B(2\Lambda _1)\oplus B(\Lambda _2)$,  the highest weight vector of  
      $B(2\Lambda _1)$ is $ Y_1(2)\cdot Y_1(1)$ and of $ B(\Lambda _2)$ is 
      $ Y_1(2)\cdot Y_2(1)\cdot Y_1(2)^{-1}$.
\item $m=0$: $M_3(0)\cdot M_1(1)=\mathcal{M}(Y_{1}( 1 ))\cdot \mathcal{M}(Y_{1}( 1) )\cong 
      B(2\Lambda _1)$,  the highest weight vector of $B(2\Lambda _1)$ is $ Y_1(1)\cdot Y_1(1)$.
\end{enumerate} 

\item For $p=3$ and $q=2$, we have $m+(p-n)_+=m+1$ and $1+(q-n)_+=1$. 
By Lemma \ref{mkm}, $M_3(m)={\mathcal M}(Y_1(m+1))=M_1(m+1)$.  Then, by the result of (2), we get
\begin{enumerate}
\item  $m\geq2$: $M_3(m)\cdot M_2(1)=\mathcal{M}(Y_{1}(m+1))\cdot \mathcal{M}
       (Y_{2}(1))\cong B(\Lambda _2+\Lambda _1)\oplus B(\Lambda _1)$ and 
      the highest weight vector of $B(\Lambda _2+\Lambda _1)$ is  $Y_1(m+1)\cdot Y_2(1)$ 
      and of $B(\Lambda _1)$ is $ Y_1(m+1)\cdot Y_1(3)^{-1}\cdot Y_1(2)$.
\item  $m=1$: $M_3(1)\cdot M_2(1)=\mathcal{M}(Y_{1}(2 ))\cdot 
       \mathcal{M}(Y_{2}( 1) )
      \cong B(\Lambda _2+\Lambda _1)$, the highest weight vector of $B(\Lambda _2+\Lambda _1)$ 
      is  $Y_1(2)\cdot Y_2(1)$
\item $m=0$: $M_3(0)\cdot M_2(1)=\mathcal{M}(Y_{1}( 1 ))\cdot 
      \mathcal{M}(Y_{2}( 1) )\cong 
       B(\Lambda _2+\Lambda _1)$,   the highest weight vector of 
       $B(\Lambda _2+\Lambda _1)$ is  $Y_1(1)\cdot Y_2(1)$.
\end{enumerate}

\item For $p=3$ and $q=3$, we have $m+(p-n)_+=m+1$ and $1+(q-n)_+=2$. 
By Lemma \ref{mkm}, $M_3(m)={\mathcal M}(Y_1(m+1))=M_1(m+1)$.  Then, by the result of (1), we get
\begin{enumerate}
\item $m\geq3$: $M_3(m)\cdot M_3(1)=\mathcal{M}(Y_{1}( m+1 ))\cdot 
      \mathcal{M}(Y_{1}(2) )
      \cong B(2\Lambda _1)\oplus B(\Lambda _2)\oplus B(0)$ and  
      the highest weight vector of $B(2\Lambda _1)$ is $ Y_1(m+1)\cdot Y_1(2)$, 
       of $ B(\Lambda _2)$ is $ Y_1(m+1)\cdot Y_2(2)\cdot Y_1(3)^{-1}$ and 
       of $B(0)$ is $ Y_1(m+1)\cdot Y_1(4)^{-1}$.
\item $m=2$: $M_3(2)\cdot M_3(1)=\mathcal{M}(Y_{1}( 3 ))\cdot 
      \mathcal{M}(Y_{1}( 2) )\cong 
      B(2\Lambda _1)\oplus B(\Lambda _2)$,  the highest weight vector of  
      $B(2\Lambda _1)$ is $ Y_1(3)\cdot Y_1(2)$ and of $ B(\Lambda _2)$ is 
      $ Y_1(2)\cdot Y_2(2)\cdot Y_1(3)^{-1}$.
\item $m=1$: $M_3(1)\cdot M_3(1)=\mathcal{M}(Y_{1}( 2 ))\cdot 
      \mathcal{M}(Y_{1}(2) )\cong 
      B(2\Lambda _1)$,  the highest weight vector of $B(2\Lambda _1)$ is $ Y_1(2)\cdot Y_1(2)$.
\end{enumerate} 
Here note that in these cases, if $M_1\cdot M_2\in M_p(m)\cdot M_q(1)$ is a highest weight 
monomial, then we have $M_1=\begin{cases}Y_p(m)&\hbox{ if }p=1,2,\\Y_{4-p}(m-2+p)&\hbox{ if }p=3.
\end{cases}$

\end{enumerate} 
\end{example}
\subsection{Properties of the product 
$M_p(m)\cdot M_q(1)$}
  
In the sequel, as before we will treat monomials in 
$\mathcal{M}(C_{n-i})$ for $i=0,1,\cdots,n-1$, 
where $U_{q}(C_{n-i})$ is the subalgebra of 
$U_{q}(C_{n})$ generated by 
$\{e_j,f_j,k^{\pm1}_j|j\in\{i+1,i+2,\cdots,n\}\}$.
Note that the set of monomial ${\mathcal M}(C_{n-i})$ is generated by 
the variables $\{Y_j(m)^{\pm1}\,|\,j\in\{i+1,\cdots,n\},m\in{\mathbb Z}\}$

\begin{theorem}\label{induction on n}
Let $M_{1}\in M_p(m)\subset \mathcal{M}(C_{n})$, 
$M_{2}\in M_q(1)
\subset \mathcal{M}(C_{n})$    
with $h(M_1)\geq h(M_2)$ and $p=l(M_1),q=l(M_2)\in \{1,\ldots, 2n\}\,$. 
If $M_{1}\cdot{M}_{2}$ is a highest weight vector,  
then we have 
\begin{equation}
M_1=\begin{cases}Y_p(m)&\hbox{ if }p\in\{1,\cd,n\},\\Y_{2n-p}(m-n+p)&\hbox{ if }p\in\{n+1,\cd,2n\}.\end{cases}
\label{hwv-M1}
\end{equation}
\end{theorem}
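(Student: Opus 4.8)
The plan is to induct on the rank $n$, using the one-reduction $\iota_1$ of \eqref{iota def} to descend from $C_n$ to $C_{n-1}$. The base case $n=2$ is exactly the concluding remark of Example~\ref{example C2}, which asserts that a highest weight product in $M_p(m)\cdot M_q(1)$ forces $M_1=Y_p(m)$ for $p\in\{1,2\}$ and $M_1=Y_{2n-p}(m-n+p)$ for $p=3$. For the inductive step I assume Theorem~\ref{induction on n} for $C_{n-1}$ and take $M_1\cdot M_2$ a highest weight vector in $C_n$ with $h(M_1)\geq h(M_2)$. The extremes $p=2n$ (then $M_1=1=Y_{2n-p}(m-n+p)$ is forced) and $q=2n$ (then $M_2=1$ and the claim is the uniqueness part of Lemma~\ref{mkm}) are disposed of at once, so I may assume $p,q\in\{1,\dots,2n-1\}$, matching the range of Lemma~\ref{M has gaps}.

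First I would show that $\iota_1(M_1\cdot M_2)=\iota_1(M_1)\cdot\iota_1(M_2)$ is again a highest weight vector, now in $\cM(C_{n-1})$. This is because $\iota_1$ intertwines $\eit$ for $i\in\{2,\dots,n\}$: the variable $Y_1$ enters $A_i$ only for $i=2$, and $\vep_i$ depends on a monomial solely through the exponents of $Y_i$, so $\vep_i(M)=\vep_i(\iota_1M)$ and $\iota_1(\eit M)=\eit(\iota_1M)$ for $i\geq 2$; since $C_{n-1}$ has index set $\{2,\dots,n\}$, highest weight of $M_1\cdot M_2$ passes to $\iota_1(M_1)\cdot\iota_1(M_2)$. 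Next, both reduced factors are still consecutive-time monomials, because the factors $X_1$ and $X_{\bar1}$ (the smallest and largest indices) can only sit at the extreme positions of a monomial, so deleting them opens no interior time-gap; hence $\iota_1(M_1)$ and $\iota_1(M_2)$ lie in crystals $M_{\bar p}(\bar m)_{C_{n-1}}$ and $M_{\bar q}(\bar m_2)_{C_{n-1}}$. Applying the uniform time-shift crystal automorphism $Y_i(l)\mapsto Y_i(l+c)$ of Section~3 to both reduced factors I normalize the second parameter to $1$, and then the inductive hypothesis yields that $\iota_1(M_1)$ is the unique highest weight monomial of its $C_{n-1}$-crystal, i.e.\ the consecutive sequence $\wtil X_2\wtil X_3\cdots$ beginning at index $2$.

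The payoff is that $\iota_1$ forgets only $X_1$ and $X_{\bar1}$ (and reversibly modifies $X_2,X_{\bar2}$), so knowing $\iota_1(M_1)$ is consecutive starting at $2$ pins $M_1$ down to exactly four shapes, according to whether $X_1$ and/or $X_{\bar1}$ are present: the genuine highest weight monomial $X_1X_2\cdots=Y_p(m)$ (resp.\ $Y_{2n-p}(m-n+p)$ when $p>n$, where the consecutive block wraps into barred indices), and precisely the three defective shapes \eqref{M1-1}, \eqref{M1-2}, \eqref{M1-3} of Lemma~\ref{M has gaps} (the cases ``$X_{\bar1}$ but no $X_1$'', ``neither'', and ``both''). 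Since our standing hypothesis $h(M_1)\geq h(M_2)$ is exactly the inequality \eqref{mpnq}, Lemma~\ref{M has gaps} applies at rank $n$ and shows that none of the three defective shapes can yield a highest weight product $M_1\cdot M_2$. Therefore $M_1$ must be the genuine highest weight monomial, which is \eqref{hwv-M1}.

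The main obstacle is the descent of the hypotheses through $\iota_1$, not the conceptual skeleton above. The awkward point is the height inequality: deleting $X_{\bar1}$ from a factor raises its bottom time by one, so $h(\iota_1(M_1))$ and $h(\iota_1(M_2))$ may shift from $h(M_1)$ and $h(M_2)$ by different amounts, and in the borderline case $h(M_1)=h(M_2)$ with $X_{\bar1}$ occurring in $M_2$ but not in $M_1$ the reduced inequality could flip, breaking the order needed to apply the rank-$(n-1)$ theorem to the correct factor. I expect to settle this by a short case analysis on the occurrence of $X_1,X_{\bar1}$ in $M_1$ and $M_2$, using that $M_1\cdot M_2$ is highest weight (so $\vep_1(M_1\cdot M_2)=0$) to rule out or compensate the problematic equal-height configuration, exactly in the spirit of the $\vep_1$-estimates already used in Lemma~\ref{M has gaps}. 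The complementary verification—that the four shapes really are the only lifts of a consecutive-from-$2$ monomial—is then routine once the extreme-position observation for $X_1,X_{\bar1}$ is recorded.
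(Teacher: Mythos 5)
Your skeleton coincides with the paper's own proof: induction on the rank with base case Example~\ref{example C2}, the $1$-reduction $\iota_1$ commuting with $\varepsilon_i$ and $\tilde e_i$ for $i\geq 2$, the observation that $X_1$ and $X_{\bar 1}$ can only occupy the extreme positions so that the reduced factors stay consecutive, the inductive identification of $\iota_1(M_1)$ with the consecutive-from-$2$ highest monomial, the four possible lifts, and the exclusion of the three defective lifts \eqref{M1-1}, \eqref{M1-2}, \eqref{M1-3} by Lemma~\ref{M has gaps}. Up to that point you are reproducing the paper's argument.

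The genuine gap is exactly the point you flag and then defer: the borderline configurations with $h(M_1)=h(M_2)$ in which the reduced heights satisfy $h(\iota_1(M_1))<h(\iota_1(M_2))$, so the rank-$(n-1)$ hypothesis cannot be applied to $\iota_1(M_1)$ at all. The paper enumerates nine such configurations (pairs of cases from its lists (1-a)--(1-d) and (2-a)--(2-d), with sign conditions on $p-n$, $q-n$) and disposes of them by a mechanism different from the one you propose: since the reduced inequality now runs the other way, the induction hypothesis is applied to $\iota_1(M_2)$ instead, forcing $\iota_1(M_2)$ to be the consecutive-from-$2$ monomial; one then checks that in all nine configurations the case (2-b) of \eqref{4-casesM2} (i.e.\ $X_1$ present and $X_{\bar 1}$ absent in $M_2$, the only case lifting to a genuine highest monomial $Y_q(1)$) never occurs, so every possible lift of $M_2$ is one of the three defective shapes; finally Lemma~\ref{M has gaps} is applied with the two factors interchanged, which is legitimate precisely because $h(M_1)=h(M_2)$ makes hypothesis \eqref{mpnq} hold in both directions, yielding that $M_1\cdot M_2$ cannot be highest weight in any of these cases. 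Your proposed substitute, a direct $\varepsilon_1$-estimate at rank $n$ ``in the spirit of Lemma~\ref{M has gaps}'', is not adequate as stated: in these configurations $M_1$ is completely unconstrained (no induction hypothesis has been brought to bear on it), and $Y_1^{\pm1}$ enters both factors not only through $X_1,X_{\bar 1}$ but also through $X_2$ and $X_{\bar 2}$, so no estimate on $\varepsilon_1(M_1\cdot M_2)$ alone can kill these cases. The missing idea is the role swap: apply the inductive hypothesis to whichever reduced factor has the larger height, and exploit the symmetry of the hypothesis of Lemma~\ref{M has gaps} under equal heights, together with the absence of case (2-b) among the exceptional configurations.
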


\begin{proof}
 We will use the induction on $n$:
First, by Example \ref{example C2}, 
we find that the claim is correct in the $C_2$-case, 
which is the first step of induction.
Next, assume that the case $C_{n-1}$ holds, 
which means that we consider monomials in 
$\mathcal{M}(C_{n-1})$, 
where we take simple roots $\{\alpha_2,\cdots,\alpha_n\}$ 
from the one for $C_n$ and for 
$M_{1}\in M_p(m)_{C_{n-1}}$, $M_{2}\in M_q(1)_{C_{n-1}}$ with 
$m+ (p-n)_+\geq 1+(q-n)_+$ and $p,q\in \{2,\ldots,2n-1\}$ if 
$M_{1}\cdot{M}_{2}$ is a highest weight vector, that is, 
$\varepsilon_{i}( M_{1}\cdot M_{2})=0$ for any $i\in\{ 2, \ldots, n\}$ then  we get
\[
M_{1}=\begin{cases}Y_{p}(m)&\hbox{ if }p\in\{2,\cd, n\},\\
Y_{2n-p}(m+p-n)&\hbox{ if } p\in\{n+1,\cd,2n-1\}, 
\end{cases}
\]
where note that 
\[
M_p(m)_{C_{n-1}}=\{X_{i_2}(m+p-1)X_{i_3}(m+p-2)\cd X_{i_p}(m)\mid 2\leq i_2<\cd<i_p\leq \bar 2\}
\,\,(p\in\{2,\cd,2n-1\}).
\]
Now, let us show that the case $C_{n}$ holds, that is, 
if $M_{1}\cdot{M}_{2}$ is a highest weight vector, then $M_1$ is in the form of 
\eqref{hwv-M1}.  Here we assume that 
\begin{equation}
p<2n,\label{p<2n}
\end{equation}
since if $p=2n$, then $M_1=1$, which is a trivial case.
By the general forms of $M_{1}$ and $M_2$  given in \eqref{k12n}, we set
\begin{align}
\nonumber
M_{1}&=X_{i_{1}}( p+m-1 )X_{i_{2}}( p+m-2 )\cdots X_{i_{p}}(m)\,\,\,\,(1\le i_{1}<\cdots <i_{P}\le \bar{1}),\\
\nonumber
M_{2}&=X_{j_{1}}(q)X_{j_{2}}(q-1)\cdots X_ {j_{q}}(1)\,\,\,\,(1\le j_{1}<\cdots<j_{q}\le \bar{1}).
\end{align}
Here we recall the $1$-reduction 
$\iota_{1}:
\mathcal{M}( C_n)\longrightarrow \mathcal{M}( C_{n-1})$
as in 
\eqref{iota def}
\[
\iota_{1}(Y_{k}(m)^{\pm1})\coloneqq
\left\{\begin{array}{lcl} Y_{k}(m)^{\pm1}&  \mbox{if} & k\ne1,\\
       1 & \mbox{if} &k=1.
\end{array}\right. 
\]  
Then we have
\begin{align}
\nonumber
\iota_{1}(  M_{1})&=\widetilde{M_1}=\iota_{1}(X_{i_{1}}( p+m-1 ))\iota_{1}(X_{i_{2}}( p+m-2 ))\cdots\iota_{1}( X_{i_{p}}(m)),\\
\nonumber
\iota_{1}(  M_{2})&=\widetilde{M_2}=\iota_{1}(X_{j_{1}}(q))\iota_{1}(X_{j_{2}}(q-1))\cdots \iota_{1}(X_ {j_{q}}(1)),
\end{align}
where we set $\widetilde{X_{i}}(l)$ as in \eqref{X-til}. 
Then we obtain 
\begin{align*}
&\widetilde{M_1}=
\left\{
\begin{array}{lll}
\widetilde{X}_{i_{1}}( p+m-1 )\widetilde{X}_{i_{2}}
( p+m-2 )\cdots \widetilde{X}_{i_{p}}(m)
&  \mbox{if}\,\,\,1,\bar{1}\notin\{i_1,\cdots,i_p\}&\hbox{(1-a)}\\
\widetilde{X}_{i_{2}}(p+m-2 )\widetilde{X}_{i_{3}}
(p+m-3 )\cdots \widetilde{X}_{i_p}(m)
&  \mbox{if} \,\,\,1\in\{i_1,\cdots,i_p\}\,\mbox{and}\,\bar{1}\notin\{i_1,\cdots,i_p\}
&\hbox{(1-b)}\\ 
\widetilde{X}_{i_1}(p+m-1)\widetilde{X}_{i_2}
(p+m-2 )\cdots \widetilde{X}_{i_{p-1}}(m+1)
&  \mbox{if} \,\,\,1\notin\{i_1,\cdots,i_p\}\,\mbox{and}\,\bar{1}\in\{i_1,\cdots,i_p\}&\hbox{(1-c)}\\
\widetilde{X}_{i_2}( p+m-2 )\widetilde{X}_{i_3}
( p+m-3 )\cdots \widetilde{X}_{i_{p-1}}(m+1)
&  \mbox{if}\,\,\,1,\bar{1}\in\{i_1,\cdots,i_p\}&\hbox{(1-d)}
\end{array}\right.
\end{align*}

\begin{align}
&\widetilde{M_2}=
\left\{
\begin{array}{lll}
\widetilde{X}_{j_{1}}( q )\widetilde{X}_{j_{2}}
( q-1)\cdots \widetilde{X}_{j_q}(1)
&  \mbox{if}\,\,\,1,\bar{1}\notin\{j_1,\cdots,j_p\}
&\q\qq\qq\hbox{(2-a)}\\
\widetilde{X}_{j_{2}}(q-1 )\widetilde{X}_{j_{3}}
(q-2 )\cdots \widetilde{X}_{j_p}(1)
&  \mbox{if} \,\,\,1\in\{j_1,\cdots,j_p\}\,\mbox{and}\,\bar{1}\notin\{j_1,\cdots,j_p\}
&\q\qq\qq\hbox{(2-b)}\\ 
\widetilde{X}_{j_1}(q)\widetilde{X}_{j_2}
(q-1 )\cdots \widetilde{X}_{j_{p-1}}(2)
&  \mbox{if} \,\,\,1\notin\{j_1,\cdots,j_p\}\,\mbox{and}\,\bar{1}\in\{j_1,\cdots,j_p\}
&\q\qq\qq\hbox{(2-c)}\\
\widetilde{X}_{j_2}( q-1 )\widetilde{X}_{j_3}
( q-2 )\cdots \widetilde{X}_{j_{p-1}}(2)
&  \mbox{if}\,\,\,1,\bar{1}\in\{j_1,\cdots,j_p\}
&\q\qq\qq\hbox{(2-d)}
\end{array}\right.
\label{4-casesM2}
\end{align}
By these explicit forms, we obtain 
\begin{align}
\begin{array}{l}(l(\wtil{M_1}),h(\wtil{M_1}))=\begin{cases}
(p,m+(p-n+1)_+)&\hbox{(1-a)}\\
(p-1,m+(p-n)_+)&\hbox{(1-b)}\\
(p-1,m+1+(p-n)_+)&\hbox{(1-c)}\\
(p-2,m+1+(p-n-1)_+)&\hbox{(1-d)}
\end{cases}\\
(l(\wtil{M_2}),h(\wtil{M_2}))=\begin{cases}
(q,1+(q-n+1)_+)&\hbox{(2-a)}\\
(q-1,1+(q-n)_+)&\hbox{(2-b)}\\
(q-1,2+(q-n)_+)&\hbox{(2-c)}\\
(q-2,2+(q-n-1)_+)&\hbox{(2-d)}
\end{cases}
\end{array}
\end{align}
Under the assumption $h(M_1)\geq h(M_2)$, in almost all cases we have $h(\wtil{M_1})\geq h(\wtil{M_2})$
except the following cases:
\begin{enumerate}
\item ((1-a), (2-a)) with $h(M_1)=h(M_2)$,  $p-n<0$ and $q-n\geq 0$.
\item ((1-a),(2-c))  with $h(M_1)=h(M_2)$,  $p-n<0$.
\item ((1-a),(2-d))  with $h(M_1)=h(M_2)$,  $p-n<0$ and $q-n\leq 0$.
\item ((1-b),(2-a))  with $h(M_1)=h(M_2)$ and $q-n\geq 0$.
\item ((1-b),(2-c))  with $h(M_1)=h(M_2)$.
\item ((1-b),(2-d))  with $h(M_1)=h(M_2)$ and $q-n\leq 0$.
\item ((1-d),(2-a)) with $h(M_1)=h(M_2)$, $p-n>0$ and $q-n\geq0$.
\item ((1-d),(2-c)) with $h(M_1)=h(M_2)$, $p-n>0$.
\item ((1-d),(2-d)) with $h(M_1)=h(M_2)$, $p-n>0$ and $q-n\leq 0$.
\end{enumerate}
Considering except the cases (1)--(9) above, we have $h(\wtil{M_1})\geq h(\wtil{M_2})$. 
By the assumption $\vep_i({M_1}{M_2})=0$ for any $i\in \{1,2,\cd,n\}$, 
we have 
$\vep_i(\widetilde{M_1}\widetilde{M_2})=0$ for any $i\in \{2,3,\cd,n\}$. Thus, 
by the induction hypothesis we have $\vep_i(\widetilde{M_1})=0$ for $i\in\{2,3,\ldots,n\}$ and then 
\begin{align}
&\widetilde{M_1}=
\left\{
\begin{array}{lll}
&\widetilde{X}_{2}( p+m-1 )\widetilde{X}_{3}
( p+m-2 )\cdots \widetilde{X}_{p+1}(m)
&  \mbox{if}\,\,\,1,\bar{1}\notin\{i_1,\cdots,i_p\},\\
&\widetilde{X}_{2}(p+m-2 )\widetilde{X}_{3}
(p+m-3 )\cdots \widetilde{X}_{p}(m)
&  \mbox{if} \,\,\,1\in\{i_1,\cdots,i_p\}\,\mbox{and}\,\bar{1}\notin\{i_1,\cdots,i_p\},\\ 
&\widetilde{X}_{2}(p+m-1)\widetilde{X}_{3}
(p+m-2 )\cdots \widetilde{X}_{p}(m+1)
&  \mbox{if} \,\,\,1\notin\{i_1,\cdots,i_p\}\,\mbox{and}\,\bar{1}\in\{i_1,\cdots,i_p\},\\
&\widetilde{X}_{2}( p+m-2 )\widetilde{X}_{3}
( p+m-3 )\cdots \widetilde{X}_{p-1}(m+1)
&  \mbox{if}\,\,\,1,\bar{1}\in\{i_1,\cdots,i_p\},
\end{array}\right.
\label{4-2-cases}
\end{align}
where note that in the first case of \eqref{4-2-cases}, $p+1\leq 2n$ by the assumption \eqref{p<2n}.
In these four cases, by their conditions we obtain
the explicit form of $M_1$ as follows:
\begin{align}
&M_1=
\left\{
\begin{array}{lll}
&{X}_{2}( p+m-1 ){X}_{3}
( p+m-2 )\cdots {X}_{p+1}(m)
&  \mbox{if}\,\,\,1,\bar{1}\notin\{i_1,\cdots,i_p\},\\
&X_1(p+m-1){X}_{2}(p+m-2 )\cdots {X}_{p}(m)
&  \mbox{if} \,\,\,1\in\{i_1,\cdots,i_p\}\,\mbox{and}\,\bar{1}\notin\{i_1,\cdots,i_p\},\\ 
&{X}_{2}(p+m-1){X}_{3}
(p+m-2 )\cdots {X}_{p}(m+1)X_{\bar 1}(m)
&  \mbox{if} \,\,\,1\notin\{i_1,\cdots,i_p\}\,\mbox{and}\,\bar{1}\in\{i_1,\cdots,i_p\},\\
&X_1(p+m-1){X}_{2}( p+m-2 )\cdots {X}_{p-1}(m+1)X_{\bar 1}(m)
&  \mbox{if}\,\,\,1,\bar{1}\in\{i_1,\cdots,i_p\}.
\end{array}\right.
\label{4-3-cases}
\end{align}
Then by Lemma \ref{M has gaps}, we find that except the second case, 
$M_1\cdot M_2$ is not a highest weight monomial.
In the second case, we really get that
\begin{equation}
M_1=X_1(p+m-1){X}_{2}(m+p-2 )\cdots {X}_{p}(m)=
\begin{cases}
Y_p(m)&\hbox{ if }p\in[1,n],\\
Y_{2n-p}(m-n+p)&\hbox{ if }p\in[n+1,2n].
\end{cases}
\label{M1-hwv}
\end{equation}

Finally, in the cases (1)--(9) above, we get $h(\wtil{M_1})< h(\wtil{M_2})$. 
Then, by arguing similarly to the above cases, one has
\begin{align}
&\widetilde{M_2}=
\left\{
\begin{array}{lll}
&\widetilde{X}_{2}( q )\widetilde{X}_{3}
(q-1 )\cdots \widetilde{X}_{q+1}(1)
&  \mbox{if}\,\,\,1,\bar{1}\notin\{j_1,\cdots,j_q\},\\
&\widetilde{X}_{2}(q)\widetilde{X}_{3}
(q-1 )\cdots \widetilde{X}_{q}(2)
&  \mbox{if} \,\,\,1\notin\{j_1,\cdots,j_q\}\,\mbox{and}\,\bar{1}\in\{j_1,\cdots,j_q\},\\
&\widetilde{X}_{2}( q-1 )\widetilde{X}_{3}
(q-2 )\cdots \widetilde{X}_{q-1}(2)
&  \mbox{if}\,\,\,1,\bar{1}\in\{j_1,\cdots,j_q\},
\end{array}\right.
\end{align}
where note that in \eqref{4-casesM2}, the second case (2-b) does not occur since 
in the cases (1)--(9) above the case (2-b) does not appear. Since in the cases 
(1)--(9) we have $h(M_1)=h(M_2)$, by considering similarly to 
the one for \eqref{4-3-cases} and applying Lemma \ref{M has gaps} to them, 
$M_1\cdot M_2$ cannot be a highest weight vector, 
which completes the proof of  Theorem \ref{induction on n}.
\end{proof}

\begin{corollary}\label{l<=m} 
For $u\in \mathcal{M}(Y_{p}(m ))$,  $v\in  \mathcal{M}(Y_{q}( l) )$ 
with $p,q\in[1,n]$ and $m\geq l$, if $u\cdot v$ is the highest weight vector in 
$\mathcal{M}(Y_{p}( m ))\cdot \mathcal{M}(Y_{q}( l) )$, 
then $u=Y_p(m)$.
\end{corollary}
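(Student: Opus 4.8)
The plan is to reduce the statement to Theorem~\ref{induction on n} by means of a uniform index shift. For $s\in\mathbb{Z}$, let $\sigma_s$ be the map on the multiplicative group $\mathcal{M}$ of Laurent monomials determined by $Y_i(m)\mapsto Y_i(m+s)$ and extended multiplicatively. Taking $m_i=s$ for all $i\in I$ in the construction recalled in Section~3 gives $c'=c$, so $\sigma_s$ is in fact a crystal \emph{automorphism} of $\mathcal{M}_c$. Concretely, $A_i(m)\mapsto A_i(m+s)$, and the functions $wt,\varphi_i,\varepsilon_i$ of \eqref{varepsilon} depend only on the exponents $y_i(m)$; hence $\sigma_s$ commutes with $\tilde e_i,\tilde f_i$ and satisfies $\varepsilon_i(\sigma_s(M))=\varepsilon_i(M)$ for every monomial $M$ and every $i\in I$. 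In particular $\sigma_s$ preserves the highest weight condition, and it carries the connected component $\mathcal{M}(Y_k(N))$ onto $\mathcal{M}(Y_k(N+s))$.

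Next I would apply $\sigma_{1-l}$ to the given factorization $u\cdot v$. Since $p,q\in[1,n]$, we have in the notation of Lemma~\ref{mkm} that $\mathcal{M}(Y_p(m))=M_p(m)$ and $\mathcal{M}(Y_q(l))=M_q(l)$, and the shift sends
\[
u\in\mathcal{M}(Y_p(m))\longmapsto \sigma_{1-l}(u)\in M_p(m+1-l),\qquad
v\in\mathcal{M}(Y_q(l))\longmapsto \sigma_{1-l}(v)\in M_q(1).
\]
Because $\sigma_{1-l}$ is multiplicative, $\sigma_{1-l}(u\cdot v)=\sigma_{1-l}(u)\cdot\sigma_{1-l}(v)$, and because it is a crystal automorphism preserving the highest weight condition, this product is again a highest weight vector. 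Writing $M_1:=\sigma_{1-l}(u)$ and $M_2:=\sigma_{1-l}(v)$, the heights are $h(M_1)=m+1-l$ and $h(M_2)=1$ (using $p,q\le n$, so that $(p-n)_+=(q-n)_+=0$). Thus the hypothesis $h(M_1)\ge h(M_2)$ of Theorem~\ref{induction on n} is exactly the assumption $m\ge l$.

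Then I would invoke Theorem~\ref{induction on n} with $p=l(M_1)\in[1,n]$: since $M_1\cdot M_2$ is a highest weight vector, it yields $M_1=Y_p(m+1-l)$. Applying the inverse shift $\sigma_{l-1}$ gives
\[
u=\sigma_{l-1}(M_1)=\sigma_{l-1}\bigl(Y_p(m+1-l)\bigr)=Y_p(m),
\]
which is the desired conclusion.

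The only genuinely delicate point is the first paragraph: one must check cleanly that the uniform shift $\sigma_s$ is a crystal automorphism of $\mathcal{M}_c$ and that, being multiplicative, it transports the product crystal $\mathcal{M}(Y_p(m))\cdot\mathcal{M}(Y_q(l))$ isomorphically onto $\mathcal{M}(Y_p(m+1-l))\cdot\mathcal{M}(Y_q(1))$ so that highest weight vectors correspond. Once this is granted, the reduction to the already-proved Theorem~\ref{induction on n}, and the matching of the inequality $m\ge l$ with the height condition $h(M_1)\ge h(M_2)$, are immediate.
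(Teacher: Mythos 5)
Your proposal is correct and takes essentially the same approach as the paper: the paper's own proof also reduces the general $l$ to $l=1$ by observing that the crystal structure depends only on the difference $m-l$ (your uniform shift $\sigma_{1-l}$, justified exactly by the $c'_{ij}=c_{ij}+m_i-m_j$ remark of Section~3), and then applies Theorem~\ref{induction on n} using $h(M_1)=m+1-l\geq 1=h(M_2)$. Your packaging of the shift as an explicit crystal automorphism commuting with $\tilde e_i,\tilde f_i$ is just a cleaner formalization of the same step.
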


 \begin{proof} 
 For $p,q\in[1,n]$, we have $\mathcal{M}(Y_{p}( m ))=M_p(m)$ and 
 $\mathcal{M}(Y_{q}( l) )=M_q(l)$. Under the condition of $p\in[1,n]$, we obtain 
 $h(u)=m+(p-n)_+=m$ for any $u\in M_p(m)$, $h(v)=1+(q-n)_+=1$ for any $v\in M_q(1)$
  and then by Theorem \ref{induction on n}, we have 
 that $u=Y_p(m)$ if $m\geq1$ and $u\cdot v\in M_p(m)\cdot M_q(1)$ is a highest weight vector. 
 The crystal structure of $\mathcal{M}(Y_{p}( m ))\cdot \mathcal{M}(Y_{q}( l) )$ depend only on the difference $m-l$ by the definitions in Section.3, that, is, 
 for a monomial $M=Y_{i_1}(m_1)^{\epsilon_1}\cdots Y_{i_p}(m_p)^{\epsilon_p}$
 and $M'=Y_{i_1}(m_1+a)^{\epsilon_1}\cdots Y_{i_p}(m_p+a)^{\epsilon_p}$ 
 ($a\in\bbZ,\,\,\epsilon_i=\pm$), we obtain 
 $wt(M)=wt(M')$, $\varepsilon_i(M)=\varepsilon_i(M')$, 
 $\varphi_i(M)=\varphi_i(M')$,  
 $\widetilde f_i(M)=A_i(k)^{-1}M$ iff $\widetilde f_i(M')=A_i(k+a)^{-1}M'$ and 
 $\widetilde e_i(M)=A_i(j)M$ iff $\widetilde e_i(M')=A_i(j+a)M'$.
\end{proof}

\begin{corollary}\label{l=m} For $p,q\in[1,n]$ and $m\in\bbZ$, 
the crystal $\mathcal{M}(Y_{p}( m ))\cdot \mathcal{M}(Y_{q}( m) )$ 
is connected as a crystal graph.
 \end{corollary}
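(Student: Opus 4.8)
The plan is to deduce connectedness from the fact that, by Theorem~\ref{pos.crystal}, the product $\mathcal{M}(Y_{p}(m))\cdot\mathcal{M}(Y_{q}(m))$ decomposes as a direct sum $\bigoplus_i B(\lambda_i)$; the number of connected components of such a crystal equals the number of its highest weight monomials, so it suffices to show that there is exactly one highest weight monomial. Since $p,q\in[1,n]$, I would first record that $\mathcal{M}(Y_{p}(m))=M_p(m)$ and $\mathcal{M}(Y_{q}(m))=M_q(m)$, and that every element of either factor has height exactly $m$ (because $(p-n)_+=(q-n)_+=0$); in particular the height hypothesis $m\ge l$ of Corollary~\ref{l<=m} holds with equality, regardless of which factor is written first.

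The core of the argument is a double application of Corollary~\ref{l<=m} exploiting the commutativity of the monomial product. Let $M$ be any highest weight monomial, and fix a factorization $M=u\cdot v$ with $u\in\mathcal{M}(Y_{p}(m))$ and $v\in\mathcal{M}(Y_{q}(m))$. Since $h(u)=m\ge m=h(v)$, Corollary~\ref{l<=m} forces $u=Y_p(m)$, so $M=Y_p(m)\cdot v$. I would then read this same equality as $M=v\cdot Y_p(m)$: this is a factorization whose first factor $v$ lies in $\mathcal{M}(Y_{q}(m))$ and whose second factor $Y_p(m)$ lies in $\mathcal{M}(Y_{p}(m))$, again with equal heights $m$. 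Applying Corollary~\ref{l<=m} once more, with the roles of $p$ and $q$ interchanged, forces the first factor to equal $Y_q(m)$, that is, $v=Y_q(m)$. Hence $M=Y_p(m)\cdot Y_q(m)$, and since $M$ was an arbitrary highest weight monomial, this is the unique one; connectedness follows.

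I expect the only delicate point to be the legitimacy of invoking Corollary~\ref{l<=m} in both orderings of the factors. This is exactly where the hypothesis $p,q\le n$ is essential: it makes both heights equal to $m$, so the inequality $m\ge l$ demanded by the corollary is met with equality in either direction, whereas for general lengths one factor would dominate and only a single application would be available. No assumption on the sign of $m$ is needed, since the corollary requires only $m\ge l$, here satisfied with equality for every integer $m$. For completeness it would also be worth remarking that $Y_p(m)\cdot Y_q(m)$ is genuinely a highest weight monomial, of weight $\Lambda_p+\Lambda_q$: this is immediate because neither $Y_p(m)$ nor $Y_q(m)$ contributes any negative power of a $Y$-variable, so all exponents $y_i(k)$ are nonnegative and hence $\varepsilon_i\bigl(Y_p(m)Y_q(m)\bigr)=0$ for every $i\in I$.
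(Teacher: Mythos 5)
Your proposal is correct and takes essentially the same approach as the paper: the paper's own (much terser) proof likewise deduces from Corollary~\ref{l<=m} that any highest weight vector $u\cdot v$ must satisfy $u=Y_p(m)$ and $v=Y_q(m)$, implicitly using the commutativity of the monomial product that you spell out via the double application with the roles of $p$ and $q$ interchanged. Your additional checks (invoking Theorem~\ref{pos.crystal} for the decomposition into irreducibles and verifying that $Y_p(m)\cdot Y_q(m)$ is genuinely highest weight) merely make explicit what the paper leaves implicit.
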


\begin{proof}
Assume for $u\in \mathcal{M}(Y_{p}( m ))$ and $v\in  \mathcal{M}(Y_{q}(m) )$, $u\cdot v$ is the highest weight vector. Then by Corollary \ref{l<=m},  we find that $u=Y_{p}( m )$ and $v=Y_{q}(m)$, which means $Y_{p}( m )\cdot Y_{q}( m)$ is the unique highest weight vector in $\mathcal{M}(Y_{p}( m ))\cdot \mathcal{M}(Y_{q}(m) )$.
\end{proof}

\begin{remark}
Our main aim of this section is Corollary \ref{l<=m} since we would like to know how 
the product $\mathcal{M}(Y_{p}( m ))\cdot \mathcal{M}(Y_{q}(1) )$, with 
$p,q\in[1,n]$ and $m\geq1$, is decomposed into connected components.
The reason why in Theorem \ref{induction on n} we consider the range of $p,q$ as  not 
$[1,n]$ but $[1,2n]$ is as follows: in the  proof of the theorem we used the induction on the rank of 
the Lie algebra of type $C$. In the rank $n$ case, the length of a monomial 
$M=X_{i_1}(m+p-1)\cd X_{i_p}(m)\in M_p(m)$ with $p\in[1,2n]$ is at most $2n$ since 
$1\leq i_1<\cd<i_p\leq \ovl{1}$. Thus, we find that if $l(M)=2n$, then $1,\bar 1\in\{i_1\cd,i_p\}$ and 
if $l(M)=2n-1$, then $1$ or $\bar 1\in\{i_1\cd,i_p\}$. Therefore, we have  
$l(\io_1(M))\leq 2n-2$, which means that the induction will proceeds. 
But, if we restrict the range of $p$ to $[1,n]$, the length of a monomial 
$M=X_{i_1}(m+p-1)\cd X_{i_p}(m)\in M_p(m)$ is at most $n$. Thus, the claim of the theorem 
holds under the condition $l(M)\leq n$. And if $l(M)=n$
 and unless $1,\bar1\in\{1,2,\cd,\bar1\}$,
then $l(\io_1(M))=n$. In this case, we cannot proceed the induction since the length of 
monomials applied $\io_1$ must be less than or equal to $n-1$.

\end{remark}

\section{Decomposition of Monomial Product}
We have seen that any monomial product $\mathcal{M}(Y_{p}(m))\cdot\mathcal{M}(Y_{q}(1))$ holds
the crystal structure and then we consider their decomposition into connected components.
\subsection{Decomposition of Monomial Product}

\begin{lemma}
For $m\geq1$, any connected component in $ \mathcal{M}(Y_{p}(  m) ) 
\cdot\mathcal{M}(Y_{q}( 1 ))$ appears in 
$ \mathcal{M}(Y_{p}( m ) )\otimes  \mathcal{M}(Y_{q}( 1 ))$ and   
$ \mathcal{M}(Y_{p}(  m) ) \cdot\mathcal{M}(Y_{q}( 1 ))$ is a subcrystal of  
$ \mathcal{M}(Y_{p}( m ) )\otimes  \mathcal{M}(Y_{q}( 1 ))$. 
More precisely, for $M_1\in\mathcal{M}(Y_{p}(  m) ), 
M_2 \in\mathcal{M}(Y_{q}( 1 ))$  if $M_1\cdot M_2$ 
is a highest weight vector then $M_1\otimes  M_2$ 
is a highest weight vector in 
$ \mathcal{M}(Y_{p}( m ) )\otimes  \mathcal{M}(Y_{q}( 1 ))$.
\end{lemma}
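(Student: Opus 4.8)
The plan is to reduce everything to the final sentence, which is the engine for the other two assertions. Both $\mathcal{M}(Y_{p}(m))\cdot\mathcal{M}(Y_{q}(1))$ and $\mathcal{M}(Y_{p}(m))\otimes\mathcal{M}(Y_{q}(1))$ are normal crystals that decompose into direct sums of the $B(\lambda)$'s: for the product this is Theorem \ref{pos.crystal}, and for the tensor product it follows from $\mathcal{M}(Y_{k}(\cdot))\cong B(\Lambda_k)$ (Corollary \ref{M crystal }) and Lemma \ref{decomposition}. Hence each is determined up to isomorphism by the weights of its highest weight vectors counted with multiplicity, and to prove both global assertions it suffices to produce a weight-preserving injection from the highest weight vectors of the product into those of the tensor product. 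The map I will use sends a highest weight monomial $N=M_{1}\cdot M_{2}$ of the product to $M_{1}\otimes M_{2}$; granting that this lands among the highest weight vectors of the tensor product, a component $B(\lambda)$ of the product is carried to a component $B(\lambda)$ of the tensor product, which simultaneously gives that every connected component of the product appears in the tensor product and that the product is (isomorphic to) a subcrystal.

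So the whole content is the precise claim. First I would invoke Corollary \ref{l<=m}: since $m\geq 1$ and $p,q\in[1,n]$, every factorization of a highest weight monomial of the product forces $M_{1}=Y_{p}(m)$, the unique highest weight monomial of $\mathcal{M}(Y_{p}(m))$; in particular $\varepsilon_{i}(M_{1})=0$ for all $i$. By the tensor product rule \eqref{epsilon def}, $M_{1}\otimes M_{2}$ is highest weight exactly when $\varepsilon_{i}(M_{1})=0$ and $\varphi_{i}(M_{1})\geq\varepsilon_{i}(M_{2})$ for every $i$, so only the inequality $\varphi_{i}(M_{1})\geq\varepsilon_{i}(M_{2})$ remains to be checked. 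Reading $Y_{p}(m)$ off directly gives $\varphi_{i}(Y_{p}(m))=\delta_{ip}$, so the target inequality says $\varepsilon_{i}(M_{2})=0$ for $i\neq p$ and $\varepsilon_{p}(M_{2})\leq 1$.

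To obtain these last inequalities I would combine the hypothesis $\varepsilon_{i}(M_{1}\cdot M_{2})=0$ with the explicit formula \eqref{varepsilon}. Writing $y_{i}(k)$ for the exponent of $Y_{i}(k)$ and $f_{j}(r)=-\sum_{k>r}y_{i}^{M_{j}}(k)$, the exponents of a product add, so $\varepsilon_{i}(M_{1}\cdot M_{2})=\max_{r}\bigl(f_{1}(r)+f_{2}(r)\bigr)$. For $M_{1}=Y_{p}(m)$ one has $f_{1}\equiv 0$ when $i\neq p$, whence $\varepsilon_{i}(M_{2})=\varepsilon_{i}(M_{1}\cdot M_{2})=0$; and for $i=p$ one has $f_{1}(r)=-1$ for $r<m$ and $f_{1}(r)=0$ for $r\geq m$, so the vanishing of $\max_{r}\bigl(f_{1}(r)+f_{2}(r)\bigr)$ forces $\max_{r\geq m}f_{2}(r)\leq 0$ and $\max_{r<m}f_{2}(r)\leq 1$, and therefore $\varepsilon_{p}(M_{2})=\max_{r}f_{2}(r)\leq 1$. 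This establishes $\varphi_{i}(M_{1})\geq\varepsilon_{i}(M_{2})$ for all $i$ and finishes the precise claim.

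Finally, for the global statements I would observe that, because $M_{1}$ is forced to be the fixed monomial $Y_{p}(m)$, multiplication restricted to $\{Y_{p}(m)\}\times\mathcal{M}(Y_{q}(1))$ is injective, so distinct highest weight monomials $N$ of the product correspond to distinct second factors $M_{2}=N/Y_{p}(m)$ and hence to distinct highest weight vectors $Y_{p}(m)\otimes M_{2}$ of the tensor product. The assignment $N\mapsto Y_{p}(m)\otimes M_{2}$ is thus a weight-preserving injection of highest weight vectors, which yields the multiplicity comparison and both assertions. I expect the only genuine labor to be the bookkeeping in \eqref{varepsilon} for $\varepsilon_{p}(M_{2})\leq 1$; the structural part is essentially free once Corollary \ref{l<=m} is in hand, since that corollary is exactly the hard input that pins down $M_{1}$.
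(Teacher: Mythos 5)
Your proof is correct and takes essentially the same route as the paper's: both arguments pin down $M_1=Y_p(m)$ (the paper cites Theorem \ref{induction on n} directly, your Corollary \ref{l<=m} is its immediate consequence) and then verify the highest-weight property of $M_1\otimes M_2$ by the same $\varepsilon_i$ bookkeeping from \eqref{varepsilon} and \eqref{epsilon def} — your formulation via $\varphi_i(Y_p(m))=\delta_{ip}$, $\varepsilon_i(M_2)=0$ for $i\ne p$ and $\varepsilon_p(M_2)\le 1$ is just a repackaging of the paper's comparison $\varepsilon_i(M_1\otimes M_2)\le\varepsilon_i(M_1\cdot M_2)$. Your closing paragraph (the weight-preserving injection of highest weight vectors plus normality giving the global subcrystal claim) merely makes explicit what the paper leaves implicit.
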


 \begin{proof}
 For $M_{1}\in\mathcal{M}(Y_{p}( m ))$ and  $M_{2}\in\mathcal{M}(Y_{q}( 1) )$ assume that $M_{1}\cdot{M}_{2}$ is a highest weight vector. By Theorem \ref{induction on n}, we know that $M_1=Y_p(m)$ is the 
 unique highest weight vector in $\cM(Y_p(m))$. Next let us see the condition on $M_2$.
 
  (1) In the case $i\ne p$, by (\ref{epsilon def}), we  have
\begin{align} \label{for tensor}
\nonumber
 \varepsilon_{i}(M_1 \otimes M_2)&=\max\,(\varepsilon_{i}(M_1 ),\varepsilon_{i}(M_2 )-\langle h_{i},wt(M_1)\rangle)
=\max\,(\varepsilon_{i}(M_1 ),\varepsilon_{i}(M_2 )-\langle h_{i},\Lambda_p\rangle)\nonumber\\
&=\max\,(0,\varepsilon_{i}(M_2 )-0)
=\max\,(0,\varepsilon_{i}(M_2 ))=\varepsilon_{i}(M_2),
 \end{align}
  and by the definition of $\varepsilon_{i}$ in \eqref{varepsilon} for $i\ne p$, we have
\begin{align} \label{for M1.M2}
\varepsilon_{i}(M_1 \cdot M_2)=\varepsilon_{i}(M_2).
 \end{align}
Therefore, by (\ref{for tensor}) and (\ref{for M1.M2}), we obtain $\varepsilon_{i}(M_1 \otimes M_2)=\varepsilon_{i}(M_1 \cdot M_2)$.  

(2) In the case $i=p$, we set 
$M_2=\prod\limits_{\substack{i\in I \\ n\in \mathbb{ Z}}}Y_{i}(n)^{y_i(n)}$.  
It follows form (\ref{varepsilon}) that 
 \begin{align} \label{varepsilon p}
\varepsilon_{p}( M_2)= \max_{n\in  \mathbb{ Z}}\,\bigg\{-\sum\limits_{k> n} {y_{p}(k)}\bigg\}.
 \end{align}
By (\ref{varepsilon p}) we can calculate 
 \begin{align} \label{varepsilon p for tensor}
\nonumber
 \varepsilon_{p}(M_1 \otimes M_2)=&\max\,( \varepsilon_{p}(M_1 ), \varepsilon_{p}(M_2)-\langle h_{p},wt(M_1)\rangle)\\
=&\max\,(0,\max_{n\in  \mathbb{ Z}}\,\bigg\{-1-\sum\limits_{k> n} {y_{p}(k)}\bigg\}),
 \end{align}
and
 \begin{align}\label{varepsilon p for M1.M2}
\varepsilon_{p}(M_1 \cdot M_2)=\max( \max_{\substack{n\in  \mathbb{ Z}\\m\le n}}\bigg\{-\sum\limits_{k> n} {y_{p}(k)}\bigg\},\max_{\substack{n\in  \mathbb{ Z}\\m> n}}\bigg\{-1-\sum\limits_{k> n} {y_{p}(k)}\bigg\}).
 \end{align}
By (\ref{varepsilon p for M1.M2}) and (\ref{varepsilon p for tensor}), we get $\varepsilon_{p}(M_1 \cdot M_2)\geq  \varepsilon_{p}(M_1 \otimes M_2)$ since $-\sum\limits_{k> n} {y_{p}(k)=0}$ for $n\gg0$. 
Then we find that if $\varepsilon_{i}(M_1 \cdot M_2)=0$ for any $i$, 
then $ \varepsilon_{i}(M_1 \otimes M_2)=0$ for any $i$, 
which means that if $M_1\cdot M_2$ is a highest weight vector then $M_1 \otimes M_2$ 
is also a highest weight vector.
\end{proof}

In Lemma \ref{decomposition}, $ \mathcal{M}(Y_{p}( m ) )\otimes  \mathcal{M}(Y_{q}( 1 ))$ is decomposed into the direct sum of $B(\Lambda_a+\Lambda_c)$'s, where $(a,c)$ satisfies the condition $( \divideontimes)$ in Lemma \ref{decomposition}.

Here, we will classify all the monomials  $M_{2}\in\mathcal{M}(Y_{q}( 1) )$ such that $wt(M_2)=\Lambda_a+\Lambda_c-\Lambda_p$ and $Y_{p}( m )\cdot M_2$ is the highest weight vector.

\begin{lemma} \label{gaps in M2}
For $M_{1}\in\mathcal{M}(Y_{p}( m ))$,  $M_{2}\in\mathcal{M}(Y_{q}( 1) )$ with $m\geq 1$, $p,q\in \{1,\ldots ,n\}$, suppose $M_1=Y_{P}(m)$ and $M_2$ is in the forms:
 \begin{enumerate}
\item If \,\,$0\le a\le c\le p$, then
$M_2= [X_{1}\cdots X_{a}]L_1 [X_{\bar{p}}\cdots X_{\overline{c+1}}] L_2.$
\item If \,\,$0\le a\le p<c$, then
$M_2=[ X_{1}\cdots X_{a}]L_1[X_{p+1}\cdots X_c]L_2 $.
\item If \,\,$0\le a\le c$, $a<p$, then 
$M_2=[X_{1}\cdots X_{c}]L_1 L_2[X_{\bar{p}}\cdots X_{\overline{a+1}}]$.
\end{enumerate}
Here $L_1,L_2$ are products of $X_i$'s and $X_{\bar{i}}$'s 
respectively such that $wt(L_1)+wt(L_2)=0$ and $L_1,L_2$ 
include subsequences which are consecutive 
but separated from other sequences, such as 
$[X_l\cdots X_k]\subset L_1, [X_{\bar{k}}\cdots X_{\bar{l}}]\subset L_2$ with $l\leq k$.
Then $M_{1}\cdot{M}_{2}$ is not a highest weight vector.
\end{lemma}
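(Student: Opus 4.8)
The plan is to exhibit, for $M_2$ of each of the three shapes, an index $i$ with $\varepsilon_i(M_1\cdot M_2)>0$; by the criterion $\varepsilon_i(\cdot)=0\ (\forall i)$ this shows $M_1\cdot M_2$ is not a highest weight vector. The basic reduction is that $M_1=Y_p(m)$ merely raises the exponent of the single variable $Y_p(m)$ by one and leaves all other $Y$--exponents unchanged; hence $\varepsilon_i(M_1\cdot M_2)=\varepsilon_i(M_2)$ for every $i\ne p$, while at $i=p$ one has $\varepsilon_p(M_1\cdot M_2)\ge\varepsilon_p(M_2)-1$, with no loss whenever the maximum defining $\varepsilon_p(M_2)$ is attained at an argument $\ge m$. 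Thus it suffices to locate in $M_2$ an \emph{uncovered} negative factor $Y_r(\beta)^{-1}$, i.e.\ one such that $M_2$ has no positive power $Y_r(\beta')^{+1}$ with $\beta'\ge\beta$: by $\varepsilon_r(M_2)=\max_n\{-\sum_{k>n}y_r(k)\}$ this forces $\varepsilon_r(M_2)\ge1$, and then $\varepsilon_r(M_1\cdot M_2)>0$ provided either $r\ne p$, or $r=p$ and $\beta>m$.

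First I would pass to $Y$--variables. A positive consecutive block telescopes, $[X_l(s)\cdots X_k(s+l-k)]=Y_k(s+l-k)\,Y_{l-1}(s+1)^{-1}$, and the barred blocks expand through Definition \ref{XXX}. Applying this to the matching pair $[X_l\cdots X_k]\subset L_1$, $[X_{\bar k}\cdots X_{\bar l}]\subset L_2$ supplied by the hypothesis, I would verify that for each index $r\in\{l-1,\dots,k\}$ the pair contributes exactly one numerator $Y_r(\alpha_r)$ and one denominator $Y_r(\beta_r)$ (the positive block gives $Y_k$ and $Y_{l-1}^{-1}$, the barred block gives numerators $Y_{l-1},\dots,Y_{k-1}$ and denominators $Y_l,\dots,Y_k$). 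Had the two blocks been adjacent in $M_2$, these would cancel in pairs by relation \eqref{equality}; the hypothesis that the blocks are \emph{separated} from the other sequences forces an argument shift between $L_1$ and $L_2$, so that the cancellation breaks for at least one $r$ and a genuine negative power $Y_r(\beta_r)^{-1}$ survives above its positive partner.

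It then remains to check that the surviving negative factor is uncovered in all of $M_2$, and this is exactly what ``separated'' buys: the block is flanked by gaps in the index, so none of the main blocks bordering $L_1,L_2$ in the three forms, namely $[X_1\cdots X_a]$, $[X_1\cdots X_c]$, $[X_{p+1}\cdots X_c]$, $[X_{\bar p}\cdots X_{\overline{c+1}}]$, $[X_{\bar p}\cdots X_{\overline{a+1}}]$, can contribute a positive $Y_r$ above the offending argument. This gives $\varepsilon_r(M_2)\ge1$. When the uncovered index may be chosen $\ne p$ the proof is complete; since the pair spans the whole range $\{l-1,\dots,k\}$ with $k>l-1$, at most one index equals $p$, so one normally has room to avoid it. The remaining case, in which the only uncovered index is $p$, is handled by the height hypothesis $m=h(M_1)\ge h(M_2)=1$, which places the offending $Y_p(\beta)^{-1}$ at an argument $\beta>m$, so that the single $Y_p(m)$ coming from $M_1$ cannot cancel it and $\varepsilon_p(M_1\cdot M_2)\ge1$.

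Since the three forms differ only in which main blocks border $L_1,L_2$, and those blocks enter only negatively, through the gaps, to prevent covering, the argument is uniform across (1)--(3). The main obstacle is the argument bookkeeping of the middle steps: showing that separation really forces $\beta_r\ne\alpha_r$ with the denominator above for some usable $r$, and tracking $Y$--arguments carefully, since the barred variables carry the shift $+(n-r+1)$ and one must feed in the prescribed arguments $q-j+1$ of the $j$th factor of $M_2\in\mathcal{M}(Y_q(1))$ both to certify the non-cancellation and to rule out a covering positive power; controlling the borderline index $r=p$ against the height is the subtlest point.
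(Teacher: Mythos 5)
Your overall route is the same as the paper's: pass to $Y$-variables, note that $\varepsilon_i(M_1\cdot M_2)=\varepsilon_i(M_2)$ for $i\ne p$, and exhibit an index at which the highest weight condition fails for the matched pair $[X_l\cdots X_k]\subset L_1$, $[X_{\bar k}\cdots X_{\bar l}]\subset L_2$. But the decisive step is missing. Your claim that ``separation forces an argument shift \dots so that the cancellation breaks for at least one $r$'' is exactly the assertion to be proved, and it is not a consequence of separation; it is a consequence of the explicit argument bookkeeping that you defer and yourself label ``the main obstacle''. Concretely, $[X_l(\xi)\cdots X_k(\xi+l-k)]=Y_k(\xi+l-k)\,Y_{l-1}(\xi+1)^{-1}$, while every factor of $[X_{\bar k}(\zeta)\cdots X_{\bar l}(\zeta+l-k)]$ carries the single $Y$-argument $\zeta-k+n+1$, so that block equals $Y_{l-1}(\zeta-k+n+1)\,Y_k(\zeta-k+n+1)^{-1}$. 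The two covering conditions coming from $\varepsilon_k(M_1\cdot M_2)=0$ and $\varepsilon_{l-1}(M_1\cdot M_2)=0$ are then $0\geq(\zeta-k+n+1)-(\xi+l-k)$ and $0\geq(\xi+1)-(\zeta-k+n+1)$, and their sum is $0\geq k-l+1$, contradicting $l\le k$. This short computation, carried out separately in each of the three forms with the arguments imposed by Proposition \ref{general form by X}, \emph{is} the paper's proof; without it your proposal establishes nothing, and your preliminary bookkeeping is also off (after internal telescoping only the indices $k$ and $l-1$ survive from the pair, not one numerator and one denominator for every $r\in\{l-1,\dots,k\}$, and adjacency of the two blocks does not by itself produce cancellation via \eqref{equality} unless the arguments are spaced by exactly $n-k$).

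Second, your fallback for the residual case $r=p$ is wrong. You argue that $m=h(M_1)\ge h(M_2)=1$ places the offending negative power $Y_p(\beta)^{-1}$ at $\beta>m$; but every argument occurring in $M_2\in\mathcal{M}(Y_q(1))$ is bounded (by roughly $n+q$) independently of $m$, so for large $m$ the factor $Y_p(m)$ lies above every $Y_p^{-1}$ in $M_2$ and the claim fails. Nor can you ``avoid'' $p$ as you suggest: the arithmetic only guarantees that \emph{one} of the two indices $k$, $l-1$ carries the partner-uncovered negative, and nothing prevents that index from being $p$. This is precisely where the paper's organization (assume highest weight, derive \emph{both} covering inequalities, add them) is safer: when $k=p$, the main block $[X_{\bar p}\cdots X_{\overline{c+1}}]$ (resp.\ $[X_{p+1}\cdots X_c]$, $[X_{\bar p}\cdots X_{\overline{a+1}}]$ in the other forms) contributes a further $Y_p^{-1}$, so at index $p$ there are two negative powers against the two positives $Y_p(m)$ and $Y_p(\xi+l-p)$, and $\varepsilon_p(M_1\cdot M_2)=0$ still forces $\xi+l-p\geq\zeta-p+n+1$ because $Y_p(m)$ can cover only one of the two negatives. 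Completing your argument requires importing exactly this counting at index $p$ together with the displayed inequalities, i.e., reverting to the paper's computation.
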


 \begin{proof}
Assume that   $M_{1}\cdot{M}_{2}$ is a highest weight vector. 
We know that $wt(M_2)=\Lambda_a+\Lambda_c-\Lambda_p$, where
\begin{align}
\nonumber
\Lambda_a+\Lambda_c-\Lambda_p=
\left\{\begin{array}{rcl}  
&\epsilon_1+\cdots+\epsilon_a-\epsilon_{c+1}-\cdots-\epsilon_p & \mbox{if} \,\, a\le c\le p,\\
&\epsilon_1+\cdots+\epsilon_a+\epsilon_{p+1}+\cdots+\epsilon_c & \mbox{if}\,\, a\le p<c.
\end{array}\right.
\end{align}  
By Proposition \ref{general form by X}, 
for some $\xi, \,\,\eta,\,\,\zeta$ with $\xi>\eta>\zeta$
we have:
 \begin{enumerate}
\item If \,\,$0\le a\le c\le p$,
  \begin{align}\label{when c<p}
\nonumber
M_{2}= &[X_{1}( q )\cdots X_{a}( q-a+1 )]\cdots[X_l(\xi )\cdots X_k(\xi+l-k )]\cdots \\
\nonumber
&[X_{\bar{p}}(\eta) \cdots X_{\overline{c+1}}(\eta+c-p+1)]\cdots \\
\nonumber
&[X_{\bar{k}}(\zeta)\cdots X_{\bar{l}}(\zeta+l-k)]\cdots.\\
\nonumber
M_1\cdot M_{2}= &Y_{p}(m)\cdot Y_{a}(q-a+1)\cdots\frac{Y_{k}(\xi+l-k)}{Y_{l-1}(\xi+1)}\cdots\frac{Y_{c}(\eta-p+n+1)}{Y_{p}(\eta-p+n+1)}\\
&\cdots \frac{Y_{l-1}(\zeta-k+n+1)}{Y_{k}(\zeta-k+n+1)}\cdots,
\end{align} 
where dots$\cdots$ means the non-consecutive separated sequences from the consecutive sequences.
By the highest weight condition, comparing $Y_{k}$ to see $\vep_k=0$ we get
\begin{align}\label{Xk}
0\geq(\zeta-k+n+1)-(\xi+l-k)=\zeta-\xi+n+1-l,
\end{align}
and comparing $Y_{l-1}$ we get
\begin{align}\label{Xl}
0\geq(\xi+1)-(\zeta-k+n+1)=\xi-\zeta+k-n.
\end{align}
By (\ref{Xk}) and (\ref{Xl}), we have $l-1\geq  k$, which contradicts the assumption $l\le k$. Then this case cannot happen and then $M_1\cdot M_2$ is not a highest weight vector.
\item  If \,\,$0\le a\le p<c$,
\begin{align}
\nonumber
M_{2}= &[X_{1}( q )\cdots X_{a}( q-a+1 )]\cdots[X_l(\xi)\cdots X_k(\xi+l-k)]\cdots\\
\nonumber
&[ X_{p+1}(\eta)\cdots X_c(\eta+p-c+1  )]\cdots\\
\nonumber
&[X_{\bar{k}}(\zeta )\cdots X_{\bar{l}}(\zeta+l-k )]\cdots.\\
\nonumber
M_1\cdot M_{2}= &Y_{p}(m)\cdot Y_{a}(q-a+1)\cdots \frac{Y_{k}(\xi+l-k)}{Y_{l-1}(\xi+1)}\cdots \frac{Y_{c}(\eta+p-c+1 )}{Y_{p}(\eta)}\\
\nonumber
&\cdots \frac{Y_{l-1}(\zeta-k+n+1)}{Y_{k}(\zeta-k+n+1)}\cdots,
\end{align} 
where dots$\cdots$ means the same  as in (\ref{when c<p}). 
By the highest weight condition, comparing $Y_{k}$ to see $\vep_k=0$ we get
\begin{align}\label{Xk2}
0\geq (\zeta-k+n+1)-(\xi+l-k)=\zeta-\xi+n+1-l,
\end{align} 
and comparing $Y_{l-1}$ to see $\vep_{k-1}=0$ we get
\begin{align}\label{Xl2}
0\geq(\xi+1)-(\zeta-k+n+1)=\xi-\zeta+k-n.
\end{align}
By (\ref{Xk2}) and (\ref{Xl2}), we have $l-1\geq k$, which contradicts the assumption $l\le k$. 
Then this case cannot happen and then $M_1\cdot M_2$ is not a highest weight vector.

\item  If \,\,$0\le a\le c$,
\begin{align}
\nonumber
M_{2}= &[X_{1}( q )\cdots X_{c}( q-c+1 )]\cdots[X_l(\xi)\cdots X_k(\xi+l-k)]\cdots\\
\nonumber
&[X_{\bar{k}}(\eta )\cdots X_{\bar{l}}(\eta+l-k )]\cdots
[ X_{\bar p}(\zeta)\cdots X_{\overline{a+1}}(\zeta+p-a+1  )]\cdots\\
\nonumber
M_1\cdot M_{2}= &Y_{p}(m)\cdot Y_{c}(q-c+1)\cdots \frac{Y_{k}(\xi+l-k)}{Y_{l-1}(\xi+1)}\cdots \\
\nonumber
& \frac{Y_{l-1}(\zeta-k+n+1)}{Y_{k}(\zeta-k+n+1)}\cdots
\frac{Y_{c}(\eta+p-c+1 )}{Y_{p}(\eta)}\cdots
\nonumber
\end{align} 
where dots$\cdots$ means the same  as in (\ref{when c<p}). 
By the highest weight condition, comparing $Y_{k}$ we get
\begin{align}\label{Xk3}
0\geq (\eta-k+n+1)-(\xi+l-k)=\eta-\xi+n+1-l,
\end{align} 
and comparing $Y_{l-1}$ we get
\begin{align}\label{Xl3}
0\geq(\xi+1)-(\eta-k+n+1)=\xi-\eta+k-n.
\end{align}
By (\ref{Xk3}) and (\ref{Xl3}), we have $l-1\geq k$, which contradicts the assumption $l\le k$. 
Then this case cannot happen and then $M_1\cdot M_2$ is not a highest weight vector
\end{enumerate}
Thus, we find that $M_{1}\cdot{M}_{2}$ is not a highest weight vector.
\end{proof}

\begin{theorem}  \label{forms thm}
Let $(a,c)$ be a pair satisfying the condition  $( \divideontimes)$ in Lemma \ref{decomposition}. For $M_{1}\in\mathcal{M}(Y_{p}( m ))$,  $M_{2}\in\mathcal{M}(Y_{q}( 1) )$ with $m\geq 1$ and $p,q\in \{1,\ldots ,n\}$,   assume that $M_{1}\cdot{M}_{2}$ is a highest weight vector with a weight $\Lambda_a+\Lambda_c$ as in Lemma \ref{decomposition}. 
Then we obtain $M_1=Y_p(m)$ 
and  as for $M_2$ we have:
 \begin{enumerate}
\item If\,\, $0<a\le c\le p$,  $M_{2}$ is in one of the following forms (I)--(III):
\begin{itemize}
 \item[(I)] 
 \begin{align}
\nonumber
 M_{2}=&[X_{1}( q )\cdots X_{a}( q-a+1 )]
 [X_{a+1}( q-a )\cdots X_{d}(q-d+1)][X_{\bar{p}}(q-d)\cdots\\
\nonumber
& X_{\overline{c+1}}(q-p-d+c+1) ][X_{\bar{d}}(q-p-d+c)\cdots X_{\overline{a+1}}(q-p-2d+c+a+1)],
                \end{align}
where $d=\frac{q+a+c-p}{2}$ and with $(a+p=c+q,\,c<p,\, m\geq \frac{q-p-a-c}{2}+n+1)$ or 
 $(a+q=c+p,\, a<q, \,m\geq n-a+1)$ or  $(a=q, c=p)$.
 \item[({II})] 
\begin{align}
\nonumber
  M_{2}=& [X_{1}( q )\cdots X_{a}( q-a+1 )][X_{p+1}(q-a)\cdots X_{f}(q+p-a-f+1)]\\
\nonumber
& [X_{\bar{f}}(q+p-a-f)\cdots X_{\overline{p+1}}(q+2p-a-2f+1)][X_{\bar{p}}(q+2p-a-2f)\cdots\\
\nonumber
& X_{\overline{c+1}}(q+p-a-2f+c+1)].
 \end{align} 
 where {$f=\frac{q+p+c-a}{2}$} and
with  $(a<c{\leq }p<d=n, \, m\geq q-a+1)$. 
 \item [(III)]
\begin{align}
\nonumber
  M_{2}=&[X_{1}( q )\cdots X_{a}( q-a+1 )][ X_{e}( q-a )\cdots X_{c}(q-a-c+e)][X_{\bar{p}}(q-a-c+e-1)\cdots\\
\nonumber
& X_{\overline{c+1}}(q-p-a+e) X_{\bar{c}}(q-p-a+e-1)\cdots 
X_{\bar{e}}(q-p+2e-a-1)],
                \end{align}
where $e=\frac{p-q+a+c+2}{2}$, and with $(c\leq p,\,a+p<c+q,\,\,m\geq n+\frac{q-p-a-c+2}{2})$.
\end{itemize}
\item If\,\,   $0<a\le p<c$,   $M_{2}$ is in one of the following forms (IV)--(VII):
\begin{itemize}
 \item[(IV)] \begin{align}
\nonumber
 M_{2}=&[X_{1}( q )\cdots X_{a}( q-a+1 )][X_{p+1}( q-a )\cdots X_{c}(q+p-a-c+1)X_{c+1}(q+p-a-c)\\
\nonumber
&\cdots X_{f}(q+p-a-f+1)][X_{\bar{f}}(q+p-a-f)\cdots X_{\overline{c+1}}(q+p+c-a-2f+1)],
 \end{align}
where $f=\frac{p+q+c-a}{2}$, and with   $(c<f=n, \, m\geq q-a+1)$ 
$(a<p<c=f, \,\,m\geq q-a+1)$ { or $(a=p,\, c=q)$}.
\item[(V)] \begin{align}
\nonumber
 M_{2}=&[X_{1}( q )\cdots X_{a}( q-a+1 )X_{a+1}( q-a )\cdots X_{h}(q-h+1)][X_{p+1}(q-h)\\
\nonumber
&\cdots X_{c}(q+p-h-c+1)][X_{\bar{h}}(q+p-h-c)\cdots X_{\overline{a+1}}(q+p+a-2h-c+1)],
 \end{align}
where $h=\frac{q+a+p-c}{2}\le p<c$, and with $(a< p,\,p+c=q+a,\,m\geq n-a+1)$ or 
$(a+c=p+q,\,p+c>a+q, m\geq q-a+1)$ or $(a=p,\, c=q)$.
 \item[(VI)] \begin{align}
\nonumber
 M_{2}=&[X_{1}( q )\cdots X_{a}( q-a+1 )]
 [X_{e}( q-a )\cdots X_{p}(q+e-a-p)][X_{p+1}(q+e-a-p-1)\\
\nonumber
&\cdots X_{c}(q+e-a-c)][X_{\bar{p}}(q+e-a-c-1)\cdots 
X_{\bar{e}}(q+2e-a-c-p-1)],
 \end{align}
where $e=\frac{p+a+c-q+2}{2}$, and with  $(a+q\leq p+c,\,a+c<p+q,\,m\geq \frac{q-a-c-p+2}{2}+n)$ or
$(a+q<p+c,\,a+c=p+q,\,m\geq q-a+1)$ or $(a=p,c=q)$.
 \item [(VII)]
 \begin{align}
\nonumber
M_2=&[X_{1}( q )\cdots X_{d}( q-d+1 )]
[X_{\bar{d}}(q-d)\cdots X_{\overline{c+1}}(q-2d+c+1)]\\
\nonumber
&[X_{\bar{p}}(q-2d+c)\cdots X_{\overline{a+1}}(q-p-2d+c+a+1)]
\end{align}
where $d=\frac{q-p+a+c}{2}$, with $(a<p<c,\, a+q=c+p,\,\,m\geq \frac{q-p-a-c}{2}+n+1)$
or $(a=p,c=q)$.
\end{itemize}
\item If \,\, $0=a< c\le p$,  $M_{2}$ is in one of the following forms (VIII)--(IX):
\begin{itemize}
 \item[(VIII)]
 \begin{align}
\nonumber
 M_{2}=&[X_{p+1}( q )\cdots X_{g}( q+p-g+1 )][X_{\bar{g}}( q+p-g )\cdots X_{\overline{p+1}}(q+2p-2g+1)]\\
\nonumber
&[X_{\bar{p}}(q+2p-2g)\cdots X_{\overline{c+1}}( q+p+c-2g+1)],
 \end{align} 
where $g=\frac{p+q+c}{2}$, and with $(m\geq q+1, \,g=n,\,p<q+c)$ or $(p=q+c,\,m\geq n-c+1)$.
\item[({IX})] \begin{align}
\nonumber
 M_{2}=&[X_{b}( q )\cdots X_{c}( q+b-c )]
 [X_{\bar{p}}( q+b-c-1 )\cdots X_{\overline{c+1}}(q+b-p)]\\
\nonumber
&[X_{\bar{c}}(q+b-p-1)\cdots X_{\bar{b}}( q+2b-c-p-1)],
 \end{align}
where $b=\frac{p+c-q+2}{2}$, and with $(m\geq\frac{q-c-p+2}{2}+n)$.\end{itemize}
\item If   $a=0\le p< c$, $M_{2}$ is in one of the following forms (X)--(XI):
\begin{itemize}
\item[({X})] \begin{align}
\nonumber
 M_{2}=&[X_{b}( q )\cdots X_{p}( q+b-p )]
 [X_{p+1}( q+b-p-1  )\cdots X_{c}( q+b-c)]\\
\nonumber
&[X_{\bar{p}}( q+b-c-1)\cdots X_{\bar{b}}( q+2b-c-p-1)],
 \end{align}
where   $b=\frac{p+c-q+2}{2}$, and with   $(c<p+q,\,m\geq n+\frac{q-c-p+2}{2})$ or 
$(c=p+q,\,m\geq q+1)$.
\item[({XI})] \begin{align}
\nonumber
 M_{2}=&[X_{p+1}( q )\cdots X_{c}( q+p-c +1)]
 [X_{c+1}( q+p-c  )\cdots X_{g}( q+p-g +1)]\\
\nonumber
&[X_{\bar{g}}(q+p-g)\cdots X_{\overline{c+1}}( q+p+c-2g+1)],
 \end{align}
where $g=\frac{p+q+c}{2}$, with $m\geq q+1$. Note that if $c<p+q$, we have $g=n$ and 
$m\geq q+1(=\frac{q-p-a-c+2}{2}+n)$, and if $c=p+q$, we get $m\geq q+1(=q-a+1)$.
\end{itemize}
\item If  $a=c=0$,  $M_{2}$ is in the following form:
\begin{itemize}
\item[({XII})] \begin{align}
\nonumber
 M_{2}=[X_{\bar{p}}(q)\cdots X_{\bar{1}}(1)],
 \end{align}
with $p=q$ and $m\geq n+1$.
\end{itemize}
\end{enumerate}
\end{theorem}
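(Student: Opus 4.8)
The plan is to reduce the whole statement to the explicit monomial description of $\mathcal{M}(Y_q(1))$ in Proposition~\ref{general form by X} and then to read off, from the weight together with the conditions $\varepsilon_i(M_1\cdot M_2)=0$, exactly which $X$-variables may occur in $M_2$ and at which heights. First I would invoke Theorem~\ref{induction on n}: since $p,q\in\{1,\dots,n\}$ and $m\geq1$ give $h(M_1)=m\geq1=h(M_2)$, the hypothesis $h(M_1)\geq h(M_2)$ is satisfied, so $M_1\cdot M_2$ being a highest weight vector forces $M_1=Y_p(m)$. Hence $\wt(M_2)=\Lambda_a+\Lambda_c-\Lambda_p$, and by the two $\epsilon$-expansions recorded in the proof of Lemma~\ref{gaps in M2} this weight naturally splits into the five regimes $0<a\le c\le p$, $0<a\le p<c$, $0=a<c\le p$, $a=0\le p<c$, and $a=c=0$ of the statement.

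Next I would fix one regime and extract the \emph{skeleton} of $M_2$. Writing $M_2=X_{j_1}(q)\cdots X_{j_q}(1)$ with $1\le j_1<\cdots<j_q\le\overline1$ and using $\wt(X_i)=\epsilon_i$, $\wt(X_{\bar i})=-\epsilon_i$, the weight $\Lambda_a+\Lambda_c-\Lambda_p$ forces the positive variables $X_1,\dots,X_a$ and the negative variables $X_{\overline{c+1}},\dots,X_{\bar p}$ (resp.\ the positive $X_{p+1},\dots,X_c$ when $c>p$) to be present, while every remaining index $i$ contributes zero net weight and therefore occurs either as a cancelling pair $X_i,X_{\bar i}$ or not at all. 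Because $l(M_2)=q$ is fixed, the number of cancelling pairs is determined, and this is precisely what pins down the pivot parameter ($d,f,e,h,g,b$) appearing in each listed form. The heart of the argument is then to show that these cancelling pairs must be packaged into the consecutive blocks with the exact heights displayed in (I)--(XII). Here I would apply Lemma~\ref{gaps in M2}: any configuration in which a consecutive run $[X_l\cdots X_k]$ is separated by a gap from its partner $[X_{\bar k}\cdots X_{\bar l}]$ cannot give a highest weight vector, so the admissible $M_2$ are assembled from only the few adjacent blocks occurring in each form.

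The remaining freedom---where the bridging block begins and whether it turns around at the long-root node $n$, which is the very origin of the split of each regime into forms such as (I) versus (II) versus (III)---is then resolved by evaluating $\varepsilon_i(Y_p(m)\cdot M_2)=0$ at the block boundaries; in particular $\varepsilon_p=0$, into which $M_1=Y_p(m)$ contributes the factor $Y_p(m)$, is what produces the height inequalities such as $m\ge\frac{q-p-a-c+2}{2}+n$ and $m\ge q-a+1$ attached to each form. I expect the main obstacle to be the bookkeeping around the long root $\alpha_n$: the identity $X_i(p)X_{\bar i}(p-n+i)=X_{i+1}(p)X_{\overline{i+1}}(p-n+i)$ of \eqref{equality} makes the $X$-expression of a monomial non-unique near the node $n$, so I would first fix a normal form (say, always turning around at the largest available index) before reading off heights, and then verify case by case that the turn-around block carries precisely the height offset $n-i+1$ dictated by Definition~\ref{XXX}. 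Once this is in place, checking that each listed form with its side conditions is consistent with $\varepsilon_i=0$ for all $i$, and that these forms exhaust all admissible $M_2$, is a lengthy but routine specialization of the boundary equations carried out separately in each of the five regimes.
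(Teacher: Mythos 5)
Your proposal matches the paper's own proof essentially step for step: Theorem \ref{induction on n} (via $h(M_1)=m\geq 1=h(M_2)$) to force $M_1=Y_p(m)$, the weight $\Lambda_a+\Lambda_c-\Lambda_p$ together with Lemma \ref{gaps in M2} to pin $M_2$ down to the gap-free block forms (I)--(XII) with pivot parameters fixed by $l(M_2)=q$, and then the conditions $\varepsilon_i(Y_p(m)\cdot M_2)=0$ compared at block boundaries (especially $\varepsilon_p=0$ against the factor $Y_p(m)$) to extract the inequalities on $m$ and eliminate impossible subcases. The only cosmetic difference is your suggestion to fix a normal form near the long root before reading off heights, which the paper handles implicitly inside its case-by-case computation.
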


 \begin{proof}
 First, we assume that $M_{1}\cdot{M}_{2}$ is a highest weight vector and then
 we know that by Theorem \ref{induction on n}, $M_{1}=Y_{p}(m)$ and 
 the form of $M_2$ is in one of {(I)}--{(XII)} in the theorem by 
 Lemma \ref{gaps in M2} since if there are sequences $X_l\cd X_k$ and $X_{\bar l}\cd X_{\bar l}$ in
 $M_2$, { then one of then must has no gap between its neighbor sequences}.

  By Lemma \ref{decomposition}, we know that $wt(M_2)=\Lambda_a+\Lambda_c-\Lambda_p$,  where
\begin{align}
\nonumber
\Lambda_a+\Lambda_c-\Lambda_p=\left\{\begin{array}{rcl}  &\epsilon_1+\cdots+\epsilon_a-\epsilon_{c+1}-\cdots-\epsilon_p & \mbox{if} \,\, a\le c\le p,\\
&\epsilon_1+\cdots+\epsilon_a+\epsilon_{p+1}+\cdots+\epsilon_c & \mbox{if}\,\, a\le p<c.
\end{array}\right.
\end{align}  
Then $M_2$ has the factors 
\begin{align}
\nonumber
\left\{\begin{array}{ll}  X_1\cdots X_a X_{\bar{p}}\cdots X_{\overline{c+1}}& \mbox{if} \,\, a\le c\le p,\\
X_1\cdots X_a X_{p+1}\cdots X_c & \mbox{if}\,\, a\le p<c.
\end{array}\right.
\end{align}  
By Proposition \ref{general form by X} and Lemma \ref{gaps in M2}, we have
 \begin{enumerate}
\item If $0<a\le c\le p$,  $M_1\cdot M_{2}$ is in one of the following forms:
\begin{itemize}
\item [({I})] $a\leq d=\frac{q+a+c-p}{2}\leq c\leq p$ and 
\begin{align}\label{case I}
M_1\cdot M_{2}=Y_{p}(m)\cdot Y_{d}(q-d+1)\cdot  \frac{Y_{c}(q-p-d+n+1)}{Y_{p}(q-p-d+n+1)}\cdot \frac{Y_{a}(q-p-2d+c+n+1)}{Y_{d}(q-p-2d+c+n+1)}.
                \end{align}
\begin{enumerate}
\item[(i)] If $ a<d<c<p$, then 
by the highest weight condition, comparing $Y_{d}$ we get
\begin{align}
\nonumber
0\geq(q-p-2d+c+n+1)-(q-d+1)=n-\frac{q+p+a-c}{2}.
\nonumber
 \end{align}
By the condition $( \divideontimes)$ in Lemma \ref{decomposition}, 
 we obtain $a=c=d, p=q=n$, which means this case cannot occur since we assume { $c> d$}. 
\item[(ii)] If $a\leq d<c=p$, then in (\ref{case I}) 
we find $\frac{Y_{c}(q-p-d+n+1)}{Y_{p}(q-p-d+n+1)}=1$. 
If $a=d$, we have $a=q$ and $c=p$, which contradicts the condition $d< c$.
Then, we assume $a< d$.
By the highest weight condition, comparing $Y_{d}$ we get
\begin{align}
\nonumber
 0\geq(q-p-2d+c+n+1)-(q-d+1)=n-d.
\end{align}
By the condition $( \divideontimes)$ in Lemma \ref{decomposition},  
$n\geq d$ then the only possibility is $d=n$. Since  $c=p$ we obtain $\frac{q+a}{2}\geq n$ then 
we have $a=c=p=q=n$ which contradict  our setting. Then this case cannot happen.
\item[(iii)] If $a=d\leq  c<p$, we get $a+p=c+q$. By the highest weight conditions, comparing $Y_p$, we also 
obtain 
\[
m\geq q-p-d+n+1=\frac{q-p-a-c}{2}+n+1.
\]
\item[(iv)] If $a< d=c=p$, 
in (\ref{case I}) we find  $\frac{Y_{c}(q-p-d+n+1)}{Y_{p}(q-p-d+n+1)}=1$. By the highest weight condition, comparing $Y_p(=Y_{d})$ we get $m\geq q-p-2d+c+n+1=n-a+1$ or $q-d+1\geq q-p-2d+c+n+1$.
Note that suppose $a=q$, then we have $p=c=d=q=a$, which contradicts 
$a<p$. Then we have $a<q$, then we obtain $m\geq n-a+1$ or $d=n$. If $d=n$ then, $c=p=d=n=a=q$, which is
a contradiction. Thus, we have $m\geq n-a+1$, $a+q=c+p, {\,c=p}$ and $a<q$. 
\item[(v)] If $a\leq d=c<p$, in (\ref{case I}) 
we find $  \frac{Y_{c}(q-p-d+n+1)}{Y_{p}(q-p-d+n+1)}\cdot \frac{Y_{a}(q-p-2d+c+n+1)}{Y_{d}(q-p-2d+c+n+1)}=\frac{Y_{a}(q-p-2d+c+n+1)}{Y_{p}(q-p-d+n+1)}$.
By the highest weight condition, comparing $Y_{p}$ we get $m\geq q-p-d+n+1=n-a+1$  
with $a+q=c+p$ and $c<p$. Here note that if $c<p$ and $a+p\leq c+q$, then we get $a<q$.
{Thus, we have $m\geq n-a+1$, $a+q=c+p$, $c<p$ and $a<q$}.
\item[(vi)] If $a=d=c=p$, we get $M_1\cdot M_{2}=Y_{p}(m)\cdot Y_{p}(1)$, which is always the highest weight vector.
\end{enumerate}

\item [({II})] We have $a\leq c\leq p\leq f=\frac{p+q+c-a}{2}$.
If $f=p$, then this case occurs in the previous case (I) with $a=d$. Then, we assume that 
 $p<f$. 
\begin{align}\label{case II}
M_1\cdot M_{2}=&Y_{p}(m)\cdot Y_{a}(q-a+1)\cdot \frac{Y_{f}(q+p-a-f+1)}{Y_{p}(q-a+1)}\cdot \frac{Y_{c}(q+p-a-2f+n+1)}{Y_{f}(q+p-a-2f+n+1)}.
\end{align}  
\begin{itemize}
\item [(i)]If $a\leq c<p<f$, by the highest weight condition, comparing $Y_{f}$ we get
\begin{align}
\nonumber
0\geq(q+p-a-2f+n+1)-(q+p-a-f+1)=n-f.
\end{align}
By the condition $( \divideontimes)$ in Lemma \ref{decomposition},  $f\le n$, then the only possibility is $f=n$. 
{ Thus, we obtain $m\geq q-a+1$ and $f=n$.}
\item [(ii)] If $a<c=p<f$, by the highest weight condition, comparing $Y_{f}$ we get $f=n$ 
with the similar way to the cases above.  And comparing $Y_{p}$, we get $m\geq q-a+1$ or
\begin{align}\label{compare Yp}
0\geq(q-a+1)-(q+p-a-2f+n+1)=p+q-a-n.
\end{align}
Since $f=n$, we get $q=a=n$ and then (\ref{compare Yp}) means $f=p$, which contradicts
our assumption $p<f$, then (\ref{compare Yp}) cannot happen. 
Then we have $f=n$ and $m\geq q-a+1(=\frac{q-p-a-c+2}{2}+n)$.
\item [(iii)] If $a=c=p<f$, by the highest weight condition, comparing $Y_f$, we get 
$q+p-a-f+1\geq q+p-a-2f+n+1$. Then we have $f=n$, which means $q=c=n$ by the condition in Lemma \ref{decomposition}. Thus, we have $c=n=f$ which derives a contradiction.

\end{itemize}

 \item [({III})] We  have $a\leq e-1\leq c\leq p$. If $e-1=c$, then this case occurs in 
 the case (I) with $a=d$.  Then we assume $e-1<c$. 
 \begin{align}
    M_1\cdot M_{2}=Y_{p}(m)\cdot Y_{a}(q-a+1)\cdot  \frac{Y_{c}(q-a-c+e)}{Y_{e-1}(q-a+1)}
    \cdot\frac{Y_{e-1}(q-p-a-c+e+n)}{Y_{p}(q-p-a-c+e+n)}.
 \end{align}
\begin{itemize}
\item[(i)] If $a<e-1<c< p$,
by the highest weight condition, comparing $Y_{e-1}$ we get 
\begin{align}\label{satisfy result}
0\geq(q-a+1)-(q-p-a-c+e+n)=\frac{p+q+c-a}{2}- n,
\end{align}
and comparing $Y_{p}$, we find  
\begin{align}
\nonumber
m\geq q-p-a-c+e+n=n+\frac{q-p-a-c+2}{2}.
\end{align}
By the condition $( \divideontimes)$ in Lemma \ref{decomposition}, $\frac{p+q+c-a}{2}\le n$ is always satisfied.
 Then we have $m\geq n+\frac{q-p-a-c+2}{2}$.
\item[(ii)] If $a=e-1<c<p$, we only need to see $Y_p$. Then by the highest weight condition,
we get 
$m\geq q-p-a-c++n+1\Leftrightarrow m\geq \frac{q-p-a-c}{2}+n+1$.
\item[(iii)] If $a\leq e-1<c=p$, by the highest weight condition, comparing $Y_{e-1}$ we get (\ref{satisfy result}) similarly  to the previous case. And comparing $Y_{p}$, we find 
 $m\geq n+\frac{q-p-a-c+2}{2}$ or 
\begin{align}\label{compare e-1}
0\geq(q-p-a-c+e+n)-(q-a-c+e)= n-p.
\end{align}
By the condition $( \divideontimes)$ in Lemma \ref{decomposition}, $n\leq p$ then
the only possibility is $p=n$ under (\ref{compare e-1}). 
And since we have (\ref{satisfy result}),  
we obtain $p=e-1$ and that contradict  our assumption that $p>e-1$,  (\ref{compare e-1}) 
cannot happen. Then we have  $m\geq n+\frac{q-p-a-c+2}{2}$ only.
\end{itemize}
Note that $e-1<c$ is equivalent to $a+p<q+c$. By using this and $c\leq p$
one gets $a+c\leq a+p<q+c\leq p+q$.
\end{itemize}
\item If  $0<a\le p<c$,   $M_1\cdot M_{2}$ is in one of the following forms:
\begin{itemize}
 \item[({IV})] \label{IV case}  We have $a\leq p<c\leq f=\frac{p+q+c-a}{2}$ and 
  \begin{align}
 M_1\cdot M_{2}=Y_{p}(m)\cdot Y_{a}(q-a+1)\cdot \frac{Y_{f}(q+p-a-f+1)}{Y_{p}(q-a+1)}\cdot 
\frac{Y_{c}(q+p-a-2f+n+1)}{Y_{f}(q+p-a-2f+n+1)}.
 \end{align}
\begin{itemize}
\item[(i)]\label{f=n} If $a<p<c<f$,  
by the highest weight condition, comparing $Y_{f}$ we get  
\begin{align}
\nonumber
0\geq(q+p-a-2f+n+1)-(q+p-a-f+1)=n-f.
\end{align}
By the condition $( \divideontimes)$ in Lemma \ref{decomposition}, one has $f\le n$, then the only possibility is  $n=f$. And comparing $Y_{p}$, we find $m\geq q-a+1\left(=\frac{q-p-a-c}{2}+n+1\right)$. 
\item[(ii)] If $a=p<c<f$, 
{ by the same argument as the previous one, we have $f=n$, which means $n=\frac{p+q+c-a}{2}
=\frac{q+c}{2}$. This derives a contradiction to the fact $c<n$ and $q\leq n$}.
Thus, this case cannot happen.
\item[(iii)] If $a<p<c=f$, we obtain $m\geq q-a+1$.
\item[(iv)] If $a=p<c=f$,  we get $c=q$ and then $M_1\cdot M_{2}=Y_{p}(m)\cdot Y_{q}(1)$, which is always a highest weight vector.
\end{itemize}
 \item[({V})] We have $a\leq h=\frac{p+q-a+c}{2}\leq p<c$.
\begin{align}\label{case V}
M_1\cdot M_{2}=Y_{p}(m)\cdot Y_{h}(q-h+1)\cdot 
\frac{Y_{c}(q+p-h-c+1)}{Y_{p}(q-h+1)}\cdot 
\frac{Y_{a}(q+p-2h-c+n+1)}{Y_{h}(q+p-2h-c+n+1)}.
 \end{align}
\begin{itemize}
\item[(i)] If $a<h<p<c$, 
{ comparing $Y_h$, we obtain
\[
0\geq (q+p-2h-c+n+1)-(q-h+1)=p-h-c+n=\frac{p-q-a-c}{2}+n.
\]
Then, we get $\frac{q+a+c-p}{2}\geq n\geq \frac{p+q+c-a}{2}$ by the condition in Lemma~\ref{decomposition}, which means $a\geq p$ and then derives a contradiction to $a<p$. Thus, 
this case cannot happen.}
\item[(ii)] If $a<h=p<c$, by the condition $( \divideontimes)$ in Lemma \ref{decomposition} we have $p+c=q+a$.  In (\ref{case V}), 
we find $ Y_{h}(q-h+1)\cdot \frac{Y_{c}(q+p-h-c+1)}{Y_{p}(q-h+1)}=Y_{c}(q+p-h-c+1)$. 
By the highest weight condition, comparing $Y_{p}$ we get 
$m\geq q+p-2h-c+n+1=n-a+1$. Then we have $p+c=q+a$ and $m\geq n-a+1$.
\item[(iii)] If $a=h< p<c$, then we have $m\geq q-h+1=q-a+1$.
\item[(iv)] If $a=h=p<c$,  we get $c=q$ and 
$M_1\cdot M_{2}=Y_{p}(m)\cdot Y_{q}(1)$, which is always the highest weight vector.
\end{itemize}
Note that $a=h$ (resp. $h=p$) is equivalent to $a+c=p+q$ (resp. $a+q=c+p$).
 \item[(VI)] We have $a\leq e-1=\frac{p+a+c-q}{2}\leq p<c$ and 
 \begin{align}
 M_1\cdot M_{2}=Y_{p}(m)\cdot Y_{a}(q-a+1)\cdot \frac{Y_{c}(q+e-a-c)}{Y_{e-1}(q-a+1)}\cdot \frac{Y_{e-1}(q+e+n-a-c-p)}{Y_{p}(q+e+n-a-c-p)}.
 \end{align}
\begin{itemize}
\item[(i)]   If $a<e-1<p<c$, 
by the highest weight condition,  comparing $Y_{e-1}$, we get 
\begin{align}
\nonumber
0\geq(q-a+1)-(q+e+n-a-c-p)=\frac{p+q+c-a}{2}- n.
\end{align}
by the condition $( \divideontimes)$ in Lemma \ref{decomposition}, $\frac{p+q+c-a}{2}\le n$ is always satisfied and comparing $Y_{p}$, we obtain $m\geq q+e+n-a-c-p= \frac{q-a-c-p+2}{2}+n$. 
\item[(ii)] If $a<e-1=p<c$, comparing $Y_p(=Y_{e-1})$, we get $m\geq q-a+1$.
\item[(iii)] If $a=e-1<p<c$, we have $a+q=c+p$, $a+c<p+q$ and $m\geq \frac{q-p-a-c}{2}+n+1$ by comparing
$Y_p$. 
\item[(iv)] If $a=e-1=p<c$, we get $a=p$ and $c=q$. In this case, $M_1\cdot M_2$ is always a highest weight
vector, which is included in the previous case.
\end{itemize}
Note that $a=e-1\Leftrightarrow a+q=p+c$ and $e-1=p\Leftrightarrow a+c=p+q$.
 \item [({VII})] Since $c-d=\frac{c+p-a-q}{2}\geq 0$, we have $c\geq d$. 
 The explicit form of 
 $M_2$ we have $d\geq c$. Thus, we have $c=d$ and then $a\leq p<c=d$.
\begin{align}
\nonumber
M_1\cdot M_{2}=&Y_{p}(m)\cdot Y_{d}(q-d+1)\cdot  
\frac{Y_{c}(q-2d+n+1)}{Y_{d}(q-2d+n+1)}
\frac{Y_{a}(q-p-2d+c+n+1)}{Y_{p}(q-p-2d+c+n+1)}
\end{align}
\begin{itemize}
\item[(i)]  If $a< p<c=d$, by the highest weight condition, comparing $Y_p$ we get
$m\geq q-p-2d+c+n+1=\frac{q-p-a-c}{2}+n+1$. Note that $c=d\Leftrightarrow a+q=c+p$. Thus 
we get 
\[
p+q>a+q=c+p>c+a.
\]
\item[(ii)] If $a=p<c=d$, we have $M_2=Y_d(q-d+1)=Y_q(1)$. Then $M_1\cdot M_2$ is always highest weight
vector.  Then, we get $a=p$ and $c=q$, which is included in the 
previous case.

\end{itemize}
\end{itemize}
\item If  $a=0< c\le p$,  $M_1\cdot M_2$ is in one of the following forms:
\begin{itemize}
 \item[({VIII})] We get $c\leq p\leq g=\frac{p+q+c}{2}$. 
 \begin{align}
\nonumber
M_1\cdot M_{2}=&Y_{p}(m)\cdot\frac{Y_{g}(q+p-g+1)}{Y_{p}(q+1)}\cdot \frac{Y_{c}(q+p+n-2g+1)}{Y_{g}(q+p+n-2g+1 )}.
 \end{align} 
\begin{itemize}
\item[(i)] If $c<p<g$, 
by the highest weight condition, comparing $Y_{g}$ we get 
\begin{align}
\nonumber
0\geq(q+p+n-2g+1 )-(q+p-g+1)=n-g.
\end{align}
By the condition $( \divideontimes)$ in Lemma \ref{decomposition}, one has $g\le n$, then the only possibility is $g=n$. 
 And comparing  $Y_{p}$, we obtain $m\geq q+1(=\frac{q-p-a-c}{2}+n+1)$. 
\item[(ii)] If $c<p=g$, we have $p=q+c$ and 
\begin{align}
\nonumber
M_1\cdot M_{2}=&Y_{p}(m)\cdot \frac{Y_{c}(q+p+n-2g+1)}{Y_{p}(q+p+n-2g+1)}
 \end{align} 
By the highest weight condition, comparing $Y_{p}$ we get 
$m\geq q+p+n-2g+1=n-c+1(=\frac{q-p-a-c}{2}+n+1$).
\item[(iii)] If $c=p<g$, by the highest weight condition, comparing $Y_{g}$ we have $g=n$ 
by the similar way to the previous case.  And comparing  $Y_{p}$, we get  $m\geq q+1$ or 
\begin{align}\label{p=n cannot happen}
0\geq(q+1 )-(q+p+n-2g+1)=n-p.
\end{align}
By the condition $( \divideontimes)$ in Lemma \ref{decomposition}, one has $p\le n$, then the only possibility is $p=n$, which contradicts  { $p<g=n$} then (\ref{p=n cannot happen}) cannot happen. Thus, we have $g=n$ and $m\geq q+1(=\frac{q-p-a-c}{2}+n+1)$.
\item[(iv)] If $c=g=p$, { then we have $c=p+q$ and $p=q+c$ and then $q=0$, 
which contradicts $q\geq1$. Thus, this case cannot occur.}
\end{itemize}
Note that $c=g\Leftrightarrow c=p+q$ and $g=p\Leftrightarrow p=q+c$.
\item[({IX})] We have $b-1=\frac{p+c-q}{2}\leq c\leq p$. 
\begin{align}
\nonumber
M_1\cdot M_{2}=&Y_{p}(m)\cdot \frac{Y_{c}(q+b-c)}{Y_{b-1}(q+1)}\cdot 
\frac{Y_{b-1}(q+b+n-c-p)}{Y_{p}(q+b+n-c-p )}.
 \end{align}
\begin{itemize}
\item[(i)] If $b-1<c<p$, then by the highest weight condition, comparing $Y_{b-1}$ if $c\ne p$ we get 
\begin{align}
\nonumber
0\le(q+b+n-c-p)-(q+1)=n-\frac{p+q+c}{2}.
\end{align}
By the condition $( \divideontimes)$ in Lemma \ref{decomposition}, $\frac{p+q+c-a}{2}\le n$ is always satisfied and  comparing $Y_{p}$, we obtain $m\geq\frac{q-c-p+2}{2}+n$. 
\item[(ii)] If $b-1<c=p$, by the highest weight condition, comparing $Y_p$, we get 
$q+b-c\geq q+b+n-c-p\Leftrightarrow p\geq n$ or $m\geq q+b+n-c-p$. 
The condition $p\geq n$ implies $p=n$. 
{ Now, we have $c=p=n$. But, by the condition $( \divideontimes)$ in Lemma \ref{decomposition}, 
we have $\frac{p+q+c-a}{2}=\frac{p+q+c}{2}\leq n$. These results mean $q\leq 0$, which contradicts $q\geq1$. Thus, $p=n$ cannot occur.}
Comparing $Y_{b-1}$ we have the same result as above.
Thus, we get $m\geq \frac{q-c-p+2}{2}+n$.
\item[(iii)] If $b-1=c<p$, then by the highest weight condition, comparing $Y_{p}$ we have 
$m\geq \frac{q-c-p+2}{2}+n$.
\item[(iv)] If $b-1=c=p$, then we get $q=0$ and then this case does not happen.
\end{itemize}
\end{itemize}
\item If   $a=0\le p< c$, $M_{2}$ has the following forms:
\begin{itemize}
\item[({X})]  
we have $b-1\leq p<c$. 
\begin{align}   
M_1\cdot M_{2}=&Y_{p}(m)\cdot \frac{Y_{c}(q+b-c)}{Y_{b-1}(q+1)}\cdot 
\frac{Y_{b-1}(q+b+n-c-p)}{Y_{p}(q+b+n-c-p )}.
 \end{align}
\begin{itemize}
\item[(i)] If $b-1<p<c$, 
by the highest weight condition, comparing $Y_{b-1}$ we get 
\begin{align}
\nonumber
0\geq(q+1)-(q+b+n-c-p)=\frac{q+p+c}{2}-n.
\end{align}
By the condition $( \divideontimes)$ in Lemma \ref{decomposition}, $\frac{p+q+c-a}{2}\le n$ is always satisfied and comparing $Y_{p}$ we find  $m\geq n+\frac{q-c-p+2}{2}$. 
\item[(ii)] If $b-1=p<c$,  
$M_2=X_{p+1}(q)\cdots X_c(1)=\frac{Y_c(1)}{Y_{p}(q+1)}$ and then comparing $Y_p$ 
  we get $m\geq q+1$. Note that $b-1=p$ is equivalent to $c=p+q$.
\end{itemize}
\item[(XI)] 
We obtain $p<c\leq g$. 
\begin{align}     
\nonumber
 M_{2}=&X_{p+1}( q )\cdots X_{c}( q+p-c +1)X_{c+1}( q+p-c  )\cdots X_{g}( q+p-g +1)\\
\nonumber
&X_{\bar{g}}(q+p-g)\cdots X_{\overline{c+1}}( q+p+c-2g+1).\\
M_1\cdot M_{2}=&Y_{p}(m)\cdot \frac{Y_{g}(q+p-g+1)}{Y_{p}(q+1)}\cdot 
\frac{Y_{c}(q+p+n-2g+1)}{Y_{g}(q+p+n-2g+1 )},
 \end{align}
where $g=\frac{q+p+c}{2}$.
\begin{itemize}
\item[(i)] If $p<c<g$, by the highest weight condition, comparing $Y_{g}$ we get
\begin{align}
\nonumber
0\geq(q+p+n-2g+1 )-(q+p-g+1)=n-g.
\end{align}
By the condition $( \divideontimes)$ in Lemma \ref{decomposition}, one has $g\le n$, then the only possibility is $n=g$. And comparing $Y_{p}$, we obtain   $m\geq q+1(=\frac{q-p-a-c+2}{2}+n)$.  
\item[(ii)] If $p<c=g$, we find $\frac{Y_{c}(q+p+n-2g+1)}{Y_{f}(q+p+n-2g+1 )}=1$. By the highest weight condition, comparing $Y_{p}$ we get  $m\geq q+1(=q-a+1)$.  Note that $c=g$ is equivalent to $c=p+q$.
\end{itemize}
\end{itemize}
\item If  $a=c=0$, then 
we have $p=q$ by the condition $a+p\le c+q$ and $a+q\le c+p$. Thus, 
$M_{2}$ is in the following form:
\begin{itemize}
\item[({XII})] 
$ M_{2}=
[X_{\bar{p}}(p)\cdots X_{\bar{1}}(1)]=\frac{1}{Y_p(n+1)}$. Then we have 
$M_1\cdot M_{2}=Y_{p}(m)\cdot \frac{1}{Y_{p}(n+1)}$ and  
by the highest weight condition, comparing $Y_{p}$ we get $m\geq n+1(=\frac{q-p-a-c+2}{2}+n)$. 
 \end{itemize}
 \end{enumerate}
\end{proof}
For a statement $P$, let $\mathcal{L}(P)=
       \left\{\begin{array}{rcl}
       1 & \mbox{if} & P\,\, is \,true,\\ 0 & \mbox{if} & P\,\, is \,false.
        \end{array}\right.$
                
\begin{theorem} \label{smy.deco.}
                         \begin{align}
                   \nonumber
                  \mathcal{M}(Y_{p}(m) ) \cdot\mathcal{M}(Y_{q}( 1 ))
                   &\cong   B(\Lambda_{p}+\Lambda_{q})\\
\nonumber
&\oplus\bigoplus \limits_{\substack{(a,c)\ne(p,q),(q,p)\\
0\le a\le p,\,\,a+p\le c+q,\,\,(p+q)-(a+c)\in 2\mathbb{ Z}_{>0}\\a\le c\le n,\,\,a+q\le c+p,\,\,\frac{p+q+c-a}{2}\le n.} }B(\Lambda_{a}+\Lambda_{c})\cdot  \mathcal{L}(m\geq \frac{q-p-a-c+2}{2}+n)\\
              \nonumber
                &\oplus\bigoplus\limits_{\substack{(a,c)\ne(p,q),(q,p)\\0\le a\le p,\,\,a+p\le c+q,\,\,p+q=a+c,\\
a\le c\le n,\,\,a+q\le c+p,\,\,\frac{p+q+c-a}{2}\le n.}}
B( \Lambda_{a} +\Lambda _{c} )\cdot \mathcal{L}(m\geq q-a+1), 
                    \end{align}
where note that $\Lm_0$ means $0$.               
\end{theorem}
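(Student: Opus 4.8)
The plan is to read off Theorem~\ref{smy.deco.} from the classification already carried out in Theorem~\ref{forms thm}, using the tensor product as a scaffold to control multiplicities. First I would fix the general shape of the answer. By Theorem~\ref{pos.crystal} the product $\mathcal{M}(Y_p(m))\cdot\mathcal{M}(Y_q(1))$ is a direct sum of irreducible crystals, and the number of summands isomorphic to a fixed $B(\lambda)$ equals the number of highest weight vectors of weight $\lambda$ in the product. The lemma showing that the product embeds as a subcrystal of $\mathcal{M}(Y_p(m))\otimes\mathcal{M}(Y_q(1))$ and carries highest weight vectors to highest weight vectors then lets me import the tensor picture: since Lemma~\ref{decomposition} decomposes the tensor product as $\bigoplus_{(\divideontimes)}B(\Lambda_a+\Lambda_c)$ with the pairs $(a,c)$ distinct and each contributing a single highest weight vector (as its proof shows), the tensor product is multiplicity free. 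Hence in the product every $B(\Lambda_a+\Lambda_c)$ with $(a,c)$ satisfying $(\divideontimes)$ occurs with multiplicity at most one, and no other isomorphism type can occur. The whole problem therefore reduces to deciding, for each admissible $(a,c)$, whether a highest weight vector of weight $\Lambda_a+\Lambda_c$ survives in the product and for which $m$.

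I would then bring in the two inputs that answer this. By Theorem~\ref{induction on n} every highest weight vector of the product is $M_1\cdot M_2$ with $M_1=Y_p(m)$, so the question is purely about $M_2\in\mathcal{M}(Y_q(1))$ with $\mathrm{wt}(M_2)=\Lambda_a+\Lambda_c-\Lambda_p$. Theorem~\ref{forms thm} lists, for every $(a,c)$ in $(\divideontimes)$, the admissible shapes (I)--(XII) of such an $M_2$ that make $Y_p(m)\cdot M_2$ a highest weight vector, each shape carrying an explicit lower threshold on $m$. The pair realizing the weight $\Lambda_p+\Lambda_q$, namely $(a,c)=(\min(p,q),\max(p,q))$, always admits $M_2=Y_q(1)$, so the highest weight vector $Y_p(m)\cdot Y_q(1)$ exists for every $m\geq1$; I would peel this off as the leading summand $B(\Lambda_p+\Lambda_q)$, which is exactly why the pairs $(p,q)$ and $(q,p)$, both carrying this weight, are excluded from the two sums.

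The core of the proof is the consolidation of thresholds. I would partition the remaining pairs of $(\divideontimes)$ by the value of the even integer $(p+q)-(a+c)\geq0$ into the strict range $(p+q)-(a+c)\in2\mathbb{Z}_{>0}$ and the equality range $p+q=a+c$, and show that for each such $(a,c)$ the various thresholds attached by Theorem~\ref{forms thm} to the shapes landing on that $(a,c)$ all collapse to a single inequality: $m\geq\frac{q-p-a-c+2}{2}+n$ in the strict range and $m\geq q-a+1$ in the equality range. This is done by rewriting each threshold appearing in Theorem~\ref{forms thm}---such as $m\geq n-a+1$, $m\geq n-c+1$, $m\geq q+1$ or $m\geq\frac{q-p-a-c}{2}+n+1$---into one of the two target forms by substituting the defining relation of the relevant shape (for instance $a+p=c+q$, $a+q=c+p$, $c=p$, or $a+c=p+q$); the multiplicity-one fact means it suffices to produce one surviving shape, so no delicate comparison between competing shapes is needed. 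Summing the resulting indicators $\mathcal{L}(\cdot)$ over the two ranges then yields the three-line formula. The main obstacle is precisely this last step: matching the numerous case thresholds of Theorem~\ref{forms thm} to the two uniform inequalities, and confirming that the shapes (I)--(XII) cover each $(a,c)$ of $(\divideontimes)$ without omission, so that no admissible component is lost and the leading weight is counted exactly once.
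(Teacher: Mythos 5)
Your proposal is correct and, in its skeleton, matches the paper's proof: both peel off the leading summand $B(\Lambda_p+\Lambda_q)$ coming from $Y_p(m)\cdot Y_q(1)$, and both obtain the two uniform thresholds $m\geq \frac{q-p-a-c+2}{2}+n$ (strict range) and $m\geq q-a+1$ (equality range) by substituting each shape's defining relation (e.g.\ $a+q=c+p$, $f=n$, $a+c=p+q$) into the case-by-case bounds of Theorem~\ref{forms thm}, exactly as the paper does when it cites cases (I)(iii)(iv)(v) through (XII). Where you genuinely diverge is the multiplicity question. The paper settles it by hand: in its cases (a)--(h) for the strict range and (a)--(b) for the equality range, it verifies that whenever two shapes of Theorem~\ref{forms thm} land on the same pair $(a,c)$, the resulting monomials $M_2$ coincide, so each $B(\Lambda_a+\Lambda_c)$ occurs at most once. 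You instead get multiplicity at most one abstractly: highest weight vectors of the product inject into highest weight vectors of $\mathcal{M}(Y_{p}(m))\otimes\mathcal{M}(Y_{q}(1))$ (the unnumbered lemma of Section~6), and the tensor product is multiplicity free by Lemma~\ref{decomposition}, so two distinct highest weight monomials of equal weight in the product are impossible, and any coincidence of overlapping shapes is automatic. This is a real economy---it renders the paper's overlap computations unnecessary---whereas the paper's explicit checks have the merit of exhibiting the common monomial $M_2$ in each overlap, which doubles as a consistency check on the case analysis of Theorem~\ref{forms thm}. One small step you should make explicit: the injectivity of $N\mapsto Y_p(m)\otimes\bigl(N/Y_p(m)\bigr)$ on highest weight vectors requires knowing that every factorization of a highest weight element $N$ of the product has first factor $Y_p(m)$; this is precisely Theorem~\ref{induction on n}, which you invoke anyway, so the gap is only expository.
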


\begin{proof}
By Theorem \ref{forms thm}, we can obtain the following:
\begin{enumerate}
\item For $ B(\Lambda_{p}+\Lambda_{q})$ we find $M_1\cdot M_2=Y_p(m)\cdot Y_q(1)$, which is always highest weight vector with
\begin{itemize}
\item[(i)]  if $0<a\le c\le p$, we have $a=q$ and $c=p$ or $a=p$ and $c=q$.
\item[(ii)]  if $0<a\le p<c$,  we have $a=p$ and $c=q$ with $p<q$.
\end{itemize}
\item For $(p+q)-(a+c)\in 2\mathbb{ Z}_{>0}$ we find that the condition on $m$ 
is $m\geq \frac{q-p-a-c+2}{2}+n$ from the proof of the Theorem \ref{forms thm}
({I})(iii)(iv)(v), 
({II}){ (i)}(ii), 
({III})(i)(ii)(iii), 
({IV})(i), 
({V})(ii), 
({VI})(i)(iii),  
({VII})(i), 
({VIII})(i)(ii)(iii), 
({IX})(i)(ii)(iii), 
({X})(i), 
({XI})(i), 
({XII}).
Thus, all these cases cover the corresponding range for the second direct sum 
of the formula in the theorem.

Let us check that there is no multiplicity on all $(a; c)$’s as above, namely, there is no
intersection on the above conditions in 
(1)--(5), or in the case that there exists an intersection
among them, they give the same monomial $M_2$. 
Then, let us see the following cases, where there might be possibly intersection among them.
\begin{itemize}
\item[(a)] (I)(v), (III)(ii): In (I)(v), we have $c=d=\frac{q-p+a+c}{2}=n$ and in (III)(ii) we get 
$a=e-1=\frac{p-q+a+c}{2}=d$. Then in both cases, $M_2$ is equal to the monomial
$\frac{Y_a(q-p-c+n+1)Y_c(q-c+1)}{Y_p(q-p-c+n+1)}$. 
\item[(b)] { (II)(i), (III)(i): From (II)(i), we have $f=\frac{p+q+c-a}{2}=n$. 
Then, we have $q+p-a-2f+n+1=q+p-a-f+1$, which implies that in (II)(i) 
$M_2=Y_a(q-a+1)\frac{Y_c(q-a+1)}{Y_p(q-a+1)}$. 
We also get $q-p-a-c+e+n=q-a+1$, which shows that in (III)(i), $M_2=Y_a(q-a+1)\frac{Y_c(q-a+1)}{Y_p(q-a+1)}$. Thus, in both cases, we obtain the same $M_2$.}
\item[(c)] (IV)(i) (VI)(i): Under the condition of (IV)(i), we have $f=\frac{p+q+c-a}{2}=n$ and then 
$q+e+n-a-c-p=q-a+1$ and $q+p-a-n+1=q+e-a-c$, where $e$ is in the same form as the previous case. Therefore, one obtains
that in both cases $M_2$ is in the form $\frac{Y_a(q-a+1)Y_c(q-p-a-n+1)}{Y_p(q-a+1)}$.
\item[(d)] (V)(ii), (VI)(iii), (VII)(i):
Under the conditions of all three cases, we have 
$p=h=\frac{p+q+a-c}{2}$, $a=e-1=\frac{p-q+a+c}{2}$ and $c=d=\frac{q-p+a+c}{2}$. Thus, 
one gets that in all three cases $M_2$ is in the same form
$M_2=\frac{Y_c(q-c+1)Y_a(q-p-c+n+1)}{Y_p(q-p-c+n+1)}$. 
\item[(e)] (VIII)(i), (IX)(i): 
In the case (VIII)(i) we have $g=\frac{p+q+c}{2}=n$ and then we get $q+b+n-c-p=q+1$ and $q+b-c=q+p-n+1$, 
where $b=\frac{p-q+c+2}{2}$. Therefore, 
one  obtains that in both cases, $M_2$ is in the same form 
$ \frac{Y_c(q+p-n+1)}{Y_p(q+1)}$.
\item[(f)] (VIII)(ii), (IX)(iii): 
Under the conditions of the both cases, we have $p=g=\frac{p+q+c}{2}=b-1=\frac{p-q+c}{2}$ and 
{ then 
$q+b+n-c-p=q+b-c=q+p-n+1=q+1$. 
Thus, one gets that the monomial $M_2$ is in the same form
$\frac{Y_c(q+1)}{Y_p(q+1)}$}. 
\item[(g)] (VIII)(iii), (IX)(ii): 
Under the conditions of both cases, we have $c=p$, $g=\frac{p+q+c}{2}$ and $b-1=\frac{p-q+c}{2}$ and then 
{ we get $q+p+n-2g+1=q+b+n-c-p=q+1$ and $q+b-c=\frac{q}{2}+1$. 
Thus, one obtains that in both cases, 
$M_2=\frac{Y_c(\frac{q}{2}+1)}{Y_p(q+1)}$. 
Note that by the condition of 
$(\divideontimes)$ in Lemma \ref{decomposition}, $q=p+q-a-c\in2\bbZ_{\geq0}$  and then
$q$ is even.}
\item[(h)] (X)(i), (XI)(i): Under the conditions of (XI)(i), we have 
$g=\frac{p+q+c}{2}=n$ and then $q+b+n-c-p=q+1$ and $q+b-c=q+p-n+1$.
Therefore, one gets that in both cases, $M_2$ is in the same form as 
$\frac{Y_c(q+p-n+1)}{Y_p(q+1)}$.
\end{itemize} 
Therefore, in these cases there is no multiplicity on any $(a,c)$. 

\item For $p+q=a+c$ we find that the condition on $m$ is $m\geq q-a+1$ from the proof of the Theorem \ref{forms thm}
({IV})(iii), ({V})(iii), (VI)(ii), ({X})(ii), ({XI})(ii).
Here, indeed, one finds that in these cases always $a<p<c$, which is derived from the condition 
$(a,c)\ne(p,q),\,(q,p)$, $a\leq p$, $a+c=p+q$ and $a+q\leq c+p$. Thus, all these cases cover the corresponding range for the last sum of the formula in the theorem.

Let us check that there is no multiplicity on all $(a,c)$'s in this case (3).
There would be the following cases:
\begin{itemize}
\item[(a)] (IV)(iii), (V)(iii), (VI)(ii): 
Under the conditions of these three cases, we have $c=f=\frac{p+q-a+c}{2}$, $a=h=\frac{p+q+a+c}{2}$ and
$p=e-1=\frac{p-q+a+c}{2}$, which are equivalent to $a+c=p+q$. And then we get 
\[
q+p-a-f+1=q+p-h-c+1=q+e-a-c=1
\]
Therefore, one obtains that in all three cases, $M_2$ is in the same form as 
$\frac{Y_a(q-a+1)Y_c(1)}{Y_p(q-a+1)}$. 
\item[(b)] (X)(ii), (XI)(ii): Under the conditions $a=0$, 
$p=b-1=\frac{p+c-q}{2}$ and $c=g=\frac{p+q+c}{2}$, 
we get $q+b-c=q+p-g+1=1$. Thus, one gets that in both cases
$M_2$ is in the same form as $\frac{Y_c(1)}{Y_p(q+1)}$.
\end{itemize}
\end{enumerate}
Here we know that there is no multiplicity on any $(a,c)$'s.
\end{proof}    

\begin{remark}
This theorem implies that the parameter $m$ seems to be some `distance' 
between two crystals ${\mathcal M}(Y_p(m))$
and ${\mathcal M}(Y_q(1))$. Indeed, if $m$ is sufficient large, then the product 
${\mathcal M}(Y_p(m))\cdot {\mathcal M}(Y_q(1))$ is the same as their tensor product 
and if $m=1$, then the product is the same as a single crystal $B(\Lm_p+\Lm_q)$.

\end{remark}

\begin{example}
For type $C_5$, 
take  $p=q=3$. By Lemma~\ref{decomposition}, we get 
\begin{align}
                   \nonumber
B(\Lambda_{3})\otimes B(\Lambda_{3})&\cong B(2\Lambda_{3})\oplus 
B(\Lambda_{2}+\Lambda_{4})\oplus B(\Lambda_{1}+\Lambda_{5})\oplus 
B(\Lambda_{1}+\Lambda_{3})\oplus B(\Lambda_{4})\oplus B(2\Lambda_{2})\\
   \nonumber
&\oplus B(\Lambda_{2})\oplus B(2\Lambda_{1})\oplus B(0),
\end{align}
( dimension  is $110\times 110=4004+5005+1155+891+780+165+55+44+1=12100)$.
\begin{itemize}
\item The component $B(2\Lambda_{3})$ 
always appears in $\mathcal{M}(Y_{3}(m) ) \cdot\mathcal{M}(Y_{3}( 1 ))$.
\item For $B(\Lambda_{2}+\Lambda_{4})$, $a=2, c=4$ satisfy $p+q=a+c$ 
then the condition for appearance in $\mathcal{M}(Y_{3}(m) ) \cdot\mathcal{M}(Y_{3}( 1 ))$ 
is $m\geq q-a+1=3-2+1=2$.
\item For $B(\Lambda_{1}+\Lambda_{5})$, $a=1, c=5$ satisfy $p+q=a+c$ 
then the condition for appearance in $\mathcal{M}(Y_{3}(m) ) \cdot\mathcal{M}(Y_{3}( 1 ))$ 
is $m\geq q-a+1=3-1+1=3$. 
\item For $B(\Lambda_{1}+\Lambda_{3})$, $a=1, c=3$ satisfy $p+q>a+c$ 
then the condition for appearance in $\mathcal{M}(Y_{3}(m) ) \cdot\mathcal{M}(Y_{3}( 1 ))$ 
is $m\geq \frac{q-p-a-c+2}{2}+n=\frac{3-3-1-3+2}{2}+5=4$.
\item For $B(\Lambda_{4})$, $a=0, c=4$ satisfy $p+q>a+c$ satisfy $p+q>a+c$ 
then the condition for appearance in $\mathcal{M}(Y_{3}(m) ) \cdot\mathcal{M}(Y_{3}( 1 ))$
is $m\geq \frac{q-p-a-c+2}{2}+n=\frac{3-3-0-4+2}{2}+5=4$. 
\item For $B(2\Lambda_{2})$, $a=c=2$  satisfy $p+q>a+c$ 
then the condition for appearance in $\mathcal{M}(Y_{3}(m) ) \cdot\mathcal{M}(Y_{3}( 1 ))$ 
is  $m\geq \frac{q-p-a-c+2}{2}+n=\frac{3-3-2-2+2}{2}+5=4$. 
\item For $B(\Lambda_{2})$, $a=0,c=2$ satisfy $p+q>a+c$ 
then the condition for appearance in $\mathcal{M}(Y_{3}(m) ) \cdot\mathcal{M}(Y_{3}( 1 ))$ 
is $m\geq \frac{q-p-a-c+2}{2}+n=\frac{3-3-0-2+2}{2}+5=5$.
\item For $B(2\Lambda_{1})$, $a=c=1$ satisfy $p+q>a+c$  
then the condition for appearance in $\mathcal{M}(Y_{3}(m) ) \cdot\mathcal{M}(Y_{3}( 1 ))$ 
is  $m\geq \frac{q-p-a-c+2}{2}+n=\frac{3-3-1-1+2}{2}+5=5$.
\item For $B(0)$, $a=c=0$ satisfy $p+q>a+c$ 
then the condition for appearance in $\mathcal{M}(Y_{3}(m) ) \cdot\mathcal{M}(Y_{3}( 1 ))$ 
is $m\geq \frac{q-p-a-c+2}{2}+n=\frac{3-3-0-0+2}{2}+5=6$.
\end{itemize}
Then we have
\begin{itemize}
\item[(a)] If $m\geq6$,  $\mathcal{M}(Y_{3}(m) ) \cdot\mathcal{M}(Y_{3}( 1 ))
\cong  B(2\Lambda_{3})\oplus B(\Lambda_{2}+\Lambda_{4})\oplus 
B(\Lambda_{1}+\Lambda_{5})\oplus B(\Lambda_{1}+\Lambda_{3})\oplus B(\Lambda_{4})\oplus 
B(2\Lambda_{2})\oplus B(\Lambda_{2})\oplus B(2\Lambda_{1})\oplus B(0)$.
\item[(b)] If $m=5$,  $\mathcal{M}(Y_{3}(m) ) \cdot\mathcal{M}(Y_{3}( 1 ))
\cong  B(2\Lambda_{3})\oplus B(\Lambda_{2}+\Lambda_{4})\oplus B(\Lambda_{1}+\Lambda_{5})
\oplus B(\Lambda_{1}+\Lambda_{3})\oplus B(\Lambda_{4})\oplus B(2\Lambda_{2})
\oplus B(\Lambda_{2})\oplus B(2\Lambda_{1})$.
\item[(c)] If $m=4$,  $\mathcal{M}(Y_{3}(m) ) \cdot\mathcal{M}(Y_{3}( 1 ))
\cong  B(2\Lambda_{3})\oplus B(\Lambda_{2}+\Lambda_{4})\oplus B(\Lambda_{1}+\Lambda_{5})
\oplus B(\Lambda_{1}+\Lambda_{3})\oplus B(\Lambda_{4})\oplus B(2\Lambda_{2})$.
\item[(d)] If $m=3$,  $\mathcal{M}(Y_{3}(m) ) \cdot\mathcal{M}(Y_{3}( 1 ))
\cong  B(2\Lambda_{3})\oplus B(\Lambda_{2}+\Lambda_{4})\oplus B(\Lambda_{1}+\Lambda_{5})$.
\item[(e)] If $m=2$,  $\mathcal{M}(Y_{3}(m) ) \cdot\mathcal{M}(Y_{3}( 1 ))
\cong B(2\Lambda_{3})\oplus B(\Lambda_{2}+\Lambda_{4})$.
\item[(f)] If $m=1$, $\mathcal{M}(Y_{3}(m) ) \cdot\mathcal{M}(Y_{3}( 1 ))
\cong B(2\Lambda_{3})$.
\end{itemize}
\end{example}
\begin{example}
By considering similarly to the previous example, for type $C_5$ and $p=4,\,q=5$, we obtain:
\begin{itemize}
\item[(a)] If $m\geq 5$, we have $ \mathcal{M}(Y_{4}(m) ) 
\cdot\mathcal{M}(Y_{5}( 1 ))\cong B(\Lambda_{4}+\Lambda_{5})\oplus 
B(\Lambda_{3}+\Lambda_{4})\oplus B(\Lambda_{2}+\Lambda_{3})\oplus B(\Lambda_{1}+\Lambda_{2})\oplus B(\Lambda_{1})$.
\item[(b)] If $m=4$,  $ \mathcal{M}(Y_{4}(m) ) \cdot\mathcal{M}(Y_{5}( 1 ))
\cong B(\Lambda_{4}+\Lambda_{5})\oplus B(\Lambda_{3}+\Lambda_{4})\oplus B(\Lambda_{2}+\Lambda_{3})\oplus B(\Lambda_{1}+\Lambda_{2})$.
\item[(c)] If $m=3$, $ \mathcal{M}(Y_{4}(m))\cdot\mathcal{M}(Y_{5}(1))
\cong B(\Lambda_{4}+\Lambda_{5})\oplus B(\Lambda_{3}+\Lambda_{4})\oplus B(\Lambda_{2}+\Lambda_{3})$.
 \item[(d)] If $m=2$,  $ \mathcal{M}(Y_{4}(m) ) \cdot\mathcal{M}(Y_{5}( 1 ))
 \cong B(\Lambda_{4}+\Lambda_{5})\oplus B(\Lambda_{3}+\Lambda_{4})$.
\item[(e)] If $m=1$, $ \mathcal{M}(Y_{4}(m) ) \cdot\mathcal{M}(Y_{5}( 1 ))
\cong B(\Lambda_{4}+\Lambda_{5})$.
\end{itemize}
\end{example}

\create{thebibliography}{9}
\addcontentsline{toc}{section}{References}
\vspace{-1ex}
\bibitem{AN} M. Alshuqayr and T.Nakashima, 
Decomposition Theorem for Product of Fundamental Crystals in Monomial Realization, 
Tokyo J.Math. 43,  no.1, (2020), 239--258.

\bibitem {BCD}S-J.Kang,  J-A.Kim,  D-U.Shin, Crystal Bases for quantum classical algebras and
Nakajima's monomials, Publ.RIMS, Kyoto Univ. 40 (2004), 757--791.

\bibitem{MK} M.Kashiwara, Realization of Crystals, 
in: Contemp. Math., vol. 325,Amer. Math. Soc., 2003, pp. 133-139.

\bibitem{M} M.Kashiwara, Crystal base and Littelmann's refined Demazure character formula, 
 Duke Math. J. 71(1993)-839-858.

\bibitem{M1} M.Kashiwara, Crystallizing the $q$-analogue of universal enveloping  algebras,  
Comm. Math. Phys. 133(1990), 249-260.

\bibitem{M2} M.Kashiwara,  On crystal bases of the $q$-analogue of 
universal enveloping  algebras,  Duke Math. J. 63(1991), 456-516.

\bibitem{MK-N}  M.Kashiwara, T.Nakashima, Crystal Graphs for Representations of the 
$q$-Analogue of Classical Lie Algebras,  J. of Algebra, 165(1994), 295-345.

\bibitem{KJ} J.Kamnitzer, P.Tingley, B.Webster, A.Weekes, O.Yacobi, 
Highest weights for truncated shifted Yangians and product monomials, 
J.comb. Algebra 3(2019), no3, 237-303.

\bibitem{TN} T.Nakashima, Crystal base a generalization of the Littlewood-Richardson 
rule for the classical Lie algebras,  Commun. Math. Phys. 154 (1993), no. 2, 215-243.

\bibitem{N} H.Nakajima, $t$-Analogs of $q$-Characters of Quantum affine  Algebras of 
Type $A_{N}$, $D_{N}$,  in: Contemp. Math., vol.325, Amer. Math. Soc., (2003), 141-160.

\delete{thebibliography}

\end{document}